\newcommand*{\MRref}[2]{ \href{http://www.ams.org/mathscinet-getitem?mr=#1}{MR #1}}
\renewcommand{\PrintDOI}[1]{\href{http://dx.doi.org/#1}{DOI #1}%
  \IfEmptyBibField{volume}{, (to appear in print)}{}}
\numberwithin{equation}{section}
\theoremstyle{plain}
\newtheorem{theorem}[equation]{Theorem}
\newtheorem{lemma}[equation]{Lemma}
\newtheorem{proposition}[equation]{Proposition}
\newtheorem{corollary}[equation]{Corollary}
\theoremstyle{definition}
\newtheorem{definition}[equation]{Definition}
\theoremstyle{remark}
\newtheorem{remark}[equation]{Remark}
\newtheorem{example}[equation]{Example}
\newcommand*{\dom}{\mathcal D}
\newcommand*{\nb}{\nobreakdash}
\newcommand*{\Star}{*-}
\newcommand*{\C}{\mathbb C}
\newcommand*{\R}{\mathbb R}
\newcommand*{\N}{\mathbb N}
\newcommand*{\Bound}{\mathbb B}
\newcommand*{\Comp}{\mathbb K}
\newcommand*{\dd}{\textup d}
\newcommand*{\ima}{\textup i}
\newcommand*{\univ}{\textup u}
\newcommand*{\opp}{\textup{op}}
\newcommand*{\transpose}{\mathsf T}
\newcommand*{\Contvin}{\textup C_\textup 0}
\newcommand*{\Id}{\textup{id}}
\newcommand*{\Multunit}{\mathbb W}
\newcommand*{\multunit}{\textup W}
\newcommand*{\maxcorep}{\mathcal V}
\newcommand*{\dumaxcorep}{\tilde{\mathcal V}}
\newcommand*{\unibich}{\mathcal X}
\newcommand*{\Flip}{\Sigma}
\newcommand*{\flip}{\sigma}
\newcommand*{\Cst}{\textup C^*}
\newcommand*{\Cred}{\textup C^*_\textup r}
\newcommand*{\Cstcat}{\mathfrak{C^*alg}}
\newcommand*{\Forget}{\mathsf{For}}
\newcommand*{\Hils}{\mathcal H}
\newcommand*{\Mult}{\mathcal M}
\newcommand*{\U}{\mathcal U}
\newcommand*{\Comult}{\Delta}
\newcommand*{\Linkunit}[2]{\textup V^{{#1}\rightarrow{#2}}}
\newcommand*{\defeq}{\mathrel{\vcentcolon=}}
\newcommand*{\conj}[1]{\overline{#1}}
\begin{document}
\title{Homomorphisms of quantum groups}

\author{Ralf Meyer}
\email{rameyer@uni-math.gwdg.de}
\address{Mathematisches Institut and Courant Centre ``Higher order structures''\\
  Georg-August Universität Göttingen\\
  Bunsenstraße 3--5\\
  37073 Göttingen\\
  Germany}

\author{Sutanu Roy}
\email{sutanu@uni-math.gwdg.de}
\address{Mathematisches Institut\\
  Georg-August Universität Göttingen\\
  Bunsenstraße 3--5\\
  37073 Göttingen\\
  Germany}

\author{Stanisław Lech Woronowicz}
\email{Stanislaw.Woronowicz@fuw.edu.pl}
\address{Katedra Metod Matematycznych Fizyki\\
  Wydział Fizyki, Uniwersytet Warszawski\\
  Hoża 74\\
  00-682 Warszawa\\
  Poland}

\begin{abstract}
  In this article, we study several equivalent notions of homomorphism between locally compact quantum groups compatible with duality.  In particular, we show that our homomorphisms are equivalent to functors between the respective categories of coactions.  We lift the reduced bicharacter to universal quantum groups for any locally compact quantum group defined by a modular multiplicative unitary, without assuming Haar weights.  We work in the general setting of modular multiplicative unitaries.
\end{abstract}

\subjclass[2000]{46L55 (46L08 81R50)}
\keywords{locally compact quantum group, multiplicative unitary, bicharacter, coaction}

\thanks{Supported by the German Research Foundation (Deutsche Forschungsgemeinschaft (DFG)) through the Research Training Group 1493 and the Institutional Strategy of the University of G\"ottingen, and by the Alexander von Humboldt-Stiftung.}

\maketitle

\section{Introduction}
\label{sec:introduction}

Let \((C,\Comult_C)\) and \((A,\Comult_A)\) be two \(\Cst\)\nb-bialgebras.  A \emph{Hopf \(^*\)\nb-homomorphism} between them is a morphism \(f\colon C\to A\) that intertwines the comultiplications, that is, the following diagram commutes:
\[
\xymatrix@C+1em{
  C \ar[d]_{\Comult_C}\ar[r]^f&A \ar[d]^{\Comult_A}\\
  C\otimes C \ar[r]_{f\otimes f}&A\otimes A.
}
\]
(Throughout this article, \(\Cst\)\nb-tensor products are spatial, and a \emph{morphism} between two \(\Cst\)\nb-algebras \(A\) and~\(B\) is a non-degenerate \Star{}homomorphism from~\(A\) to the multiplier algebra \(\Mult(B)\) or, equivalently, a strictly continuous unital \Star{}homomorphism \(\Mult(A)\to\Mult(B)\).  Thus \(\Cst\)\nb-algebras with the above morphisms form a category.)

These Hopf \(^*\)\nb-homomorphisms are the right morphisms between compact quantum groups (see~\cite{Wang:Free_products_of_CQG}) and, more generally, between amenable quantum groups.  For locally compact groups, however, they are not appropriate because they do not behave well for reduced group \(\Cst\)\nb-algebras.  It is easy to see that a group homomorphism \(f\colon G\to H\) induces Hopf \(^*\)\nb-homomorphisms from~\(\Contvin(H)\) to~\(\Contvin(G)\) and from~\(\Cst(G)\) to~\(\Cst(H)\).  But the latter does not always descend to the reduced group \(\Cst\)\nb-algebras.  For instance, the constant map from~\(G\) to the trivial group~\(\{1\}\) induces a Hopf \(^*\)\nb-homomorphism \(\Cred(G)\to \Cred(\{1\})=\C\) if and only if~\(G\) is amenable.

Thus Hopf \(^*\)\nb-homomorphisms are not compatible with duality, unless we use full duals everywhere: a Hopf \(^*\)\nb-homomorphism from~\(C\) to~\(A\) need not induce a Hopf \(^*\)\nb-homomorphism from~\(\hat{A}\) to~\(\hat{C}\).

More satisfactory notions of quantum group morphisms are introduced by Ng~\cite{Ng:Morph_of_Mult_unit} and later by Kustermans~\cite{Kustermans:LCQG_universal}.  While we wrote the first version of this article, we were unaware of Ng's article~\cite{Ng:Morph_of_Mult_unit} and, therefore, duplicated some of his work.

The theory in~\cite{Ng:Morph_of_Mult_unit} works well provided quantum groups are defined by multiplicative unitaries that lift to the universal quantum groups.  More precisely, the issue is to lift the (reduced) bicharacter in \(\U\Mult(\hat{C}\otimes C)\) to a bicharacter in \(\U\Mult(\hat{C}^\univ \otimes C^\univ)\), where \(\hat{C}^\univ\) and~\(C^\univ\) denote the universal quantum groups associated to the quantum groups \(\hat{C}\) and~\(C\), respectively.  (We write \(\U\Mult(A)\) for the group of unitary multipliers of a \(\Cst\)\nb-algebra~\(A\), and \(\U(\Hils)\) for the unitary group on a Hilbert space~\(\Hils\).)

Multiplicative unitaries that lift to \(\U\Mult(\hat{C}^\univ \otimes C^\univ)\) are called \emph{basic} in \cite{Ng:Morph_of_Mult_unit}*{Definition 2.3}, and several sufficient conditions for this are found in~\cite{Ng:Morph_of_Mult_unit}.  Kustermans~\cite{Kustermans:LCQG_universal} gets such a lifting from Haar weights.  Here we establish that all modular multiplicative unitaries are basic, so that the theory in~\cite{Ng:Morph_of_Mult_unit} works very generally.

The definition of locally compact quantum group we adopt here is the one by Sołtan and the third author based on modular multiplicative unitaries (see~\cite{Soltan-Woronowicz:Multiplicative_unitaries}) and not assuming Haar weights.  The description of quantum groups by multiplicative unitaries goes back to Baaj and Skandalis~\cite{Baaj-Skandalis:Unitaires}.

We now briefly list the equivalent notions of quantum group homomorphism that we study.  The most fundamental notion comes from the point of view that quantum groups encode symmetries of \(\Cst\)\nb-algebras, in the form of coactions.

Let  \(\Cstcat(A)\) be the category of \(\Cst\)\nb-algebras with a continuous coaction of \((A,\Comult_A)\), with \(A\)\nb-equivariant morphisms as arrows.  Forgetting the coaction provides a functor~\(\Forget\) to the category~\(\Cstcat\) of \(\Cst\)\nb-algebras without extra structure.  A \emph{quantum group homomorphism} from~\((C,\Comult_C)\) to~\((A,\Comult_A)\) is a functor
\begin{equation}
  \label{eq:intro_functor_morphism}
  F\colon \Cstcat(C)\to\Cstcat(A)\qquad \text{with \(\Forget\circ F=\Forget\).}  
\end{equation}
A Hopf \(^*\)\nb-homomorphism \(f\colon C\to A\) clearly induces such a functor: compose a coaction \(D\to D\otimes C\) with \(\Id_D\otimes f\) to get a coaction of~\(A\).  While the definition in~\eqref{eq:intro_functor_morphism} is conceptually clear, it is hard to work with.  We provide several more concrete descriptions of quantum group homomorphisms.

A Hopf \(^*\)\nb-homomorphism \(f\colon C\to A\) yields a unitary multiplier
\[
V = V_f \defeq (\Id_{\hat{C}} \otimes f)(\multunit^C)
\in \U\Mult(\hat{C}\otimes A),
\]
which is a \emph{bicharacter}, that is,
\begin{equation}
  \label{eq:intro_bicharacter}
  (\Comult_{\hat{C}}\otimes\Id_A)V=V_{23}V_{13}
  \quad\text{and}\quad
  (\Id_{\hat{C}}\otimes\Comult_A)V=V_{12}V_{13}.
\end{equation}
We show that bicharacters in \(\U\Mult(\hat{C}\otimes A)\) correspond to quantum group homomorphisms from~\(C\) to~\(A\) and therefore call them bicharacters \emph{from~\(C\) to~\(A\)}.  Bicharacters were already interpreted as homomorphisms of quantum groups in~\cite{Ng:Morph_of_Mult_unit}, where they are called \emph{\(\Multunit_C\)\nb-\(\Multunit_A\)-birepresentations}.

Bicharacters are nicely compatible with duality.  If~\(V\) is a bicharacter from~\(C\) to~\(A\), then \(\sigma(V^*)\) is a bicharacter from~\(\hat{A}\) to~\(\hat{C}\), where we use the coordinate flip
\[
\sigma\colon \hat{C}\otimes A\to \hat{\hat{A}}\otimes\hat{C}.
\]

A Hopf \(^*\)\nb-homomorphism \(f\colon C\to A\) is determined by the bicharacter~\(V_f\) because elements of the form \((\omega\otimes\Id_C)(\multunit^C)\) for linear functionals~\(\omega\) on~\(\hat{C}\) are dense in~\(C\) and \(f\bigl((\omega\otimes\Id_C)(\multunit^C)\bigr) = (\omega\otimes\Id_A)(V_f)\).  Furthermore, if~\(f\) admits a dual quantum group homomorphism \(\hat{f}\colon \hat{A}\to\hat{C}\), then these two are related by
\begin{equation}
  \label{eq:dual_of_strong}
  (\hat{f}\otimes\Id_A)(\multunit^A)
  = V_f
  = (\Id_{\hat{C}} \otimes f)(\multunit^C).
\end{equation}

The standard Hilbert space representations of \(\hat{C}\) and~\(A\) turn a bicharacter~\(V\) into a unitary operator \(\mathbb{V}=\mathbb{V}_f\) on \(\Hils_C\otimes\Hils_A\) that satisfies the two pentagonal equations
\begin{equation}
  \label{eq:intro_pentagonal}
  \mathbb{V}_{23}\Multunit^C_{12} = \Multunit^C_{12}\mathbb{V}_{13}\mathbb{V}_{23}
  \quad\text{and}\quad
  \Multunit_{23}^A \mathbb{V}_{12} = \mathbb{V}_{12}\mathbb{V}_{13}\Multunit_{23}^A,
\end{equation}
which relate it to the multiplicative unitaries \(\Multunit^C\) and~\(\Multunit^A\) of \(C\) and~\(A\).  Conversely, such unitary operators come from bicharacters, so that we call them \emph{concrete bicharacters}.

We show that concrete bicharacters and hence bicharacters form a category.  The composition of two bicharacters \(\Linkunit{C}{A}\in\U\Mult(\hat{C}\otimes A)\) and \(\Linkunit{A}{B}\in\U\Mult(\hat{A}\otimes B)\) is the unique \(\Linkunit{C}{B}\in\U\Mult(\hat{C}\otimes B)\) that satisfies
\[
\mathbb{V}^{A\to B}_{23}\mathbb{V}^{C\to A}_{12}
= \mathbb{V}^{C\to A}_{12}\mathbb{V}^{C\to B}_{13}\mathbb{V}^{A\to B}_{23}.
\]
As expected, the composition of a bicharacter \(\Linkunit{C}{A}\in\U\Mult(\hat{C}\otimes A)\) with the bicharacter~\(V_f\) of a Hopf \(^*\)\nb-homomorphism \(f\colon A\to B\) yields \((\Id_{\hat{C}}\otimes f) (\Linkunit{C}{A}\)).

Previously, it was suggested to define quantum group homomorphisms by passing to universal quantum groups~\cite{Vaes:Induction_Imprimitivity}. Moreover, Kustermans~\cite{Kustermans:LCQG_universal} shows that the Hopf \(^*\)\nb-homomorphisms between universal quantum groups correspond to certain coactions on the von Neumann algebraic versions of these quantum groups.

The universal property that defines the universal quantum group \((C^\univ,\Comult^\univ)\) of a quantum group \((C,\Comult)\) immediately implies that bicharacters from~\(C\) to~\(A\) correspond bijectively to Hopf \(^*\)\nb-homomorphisms \(C^\univ\to A\).  We show that such bicharacters lift uniquely to bicharacters from~\(C\) to~\(A^\univ\).  Thus quantum group homomorphisms \(C\to A\) are in bijection with Hopf \(^*\)\nb-homomorphisms \(C^\univ\to A^\univ\).  The composition of bicharacters corresponds to the usual composition of Hopf \(^*\)\nb-homomorphisms between universal quantum groups.

There is a bijective correspondence between bicharacters and certain coactions.  A \emph{right quantum group homomorphism} from~\(C\) to~\(A\) is a morphism \(\Delta_R\colon C\to C\otimes A\) such that the following two diagrams commute:
\begin{equation}
  \label{eq:intro_right_homomorphism}
  \begin{gathered}
    \xymatrix@C+1.2em{
      C \ar[r]^{\Delta_R}\ar[d]_{\Comult_C}&
      C\otimes A \ar[d]^{\Comult_C\otimes\Id_A}\\
      C\otimes C \ar[r]_-{\Id_C\otimes \Delta_R}&
      C\otimes C\otimes A,
    }\qquad
    \xymatrix@C+1.2em{
      C \ar[r]^{\Delta_R}\ar[d]_{\Delta_R}&
      C\otimes A \ar[d]^{\Id_C\otimes \Comult_A}\\
      C\otimes A \ar[r]_-{\Delta_R\otimes\Id_A}&
      C\otimes A\otimes A.
    }
  \end{gathered}
\end{equation}
A bicharacter \(V\in\U\Mult(\hat{C}\otimes A)\) yields a right quantum group homomorphism
\[
\Delta_R\colon C\to C\otimes A,\qquad
\Delta_R(x) \defeq \mathbb{V}(x\otimes1)\mathbb{V}^*\quad\text{for all \(x\in C\).}
\]
The bicharacter corresponding to~\(\Delta_R\) is the unique unitary \(V\in\U\Mult(\hat{C}\otimes A)\) with
\begin{equation}
  \label{eq:intro_bicharacter_from_right_homomorphism}
  (\Id_{\hat{C}} \otimes \Delta_R)(\multunit) = \multunit_{12}V_{13}.  
\end{equation}
Given a functor \(F\colon \Cstcat(C)\to\Cstcat(A)\) with \(\Forget\circ F=\Forget\), we get such a right coaction~\(\Delta_R\) by applying~\(F\) to the coaction~\(\Comult_C\) on~\(C\).  In this way, right quantum group homomorphisms are equivalent to functors as in~\eqref{eq:intro_functor_morphism}.

The following technical result is a crucial tool in this article.  If \(a,b\in \Bound (\Hils_A)\) satisfy \(\Multunit (a\otimes 1) = (1\otimes b)\Multunit\), then already \(a=b\in\C\cdot 1\).  This implies that a multiplier of~\(A\) is constant if it is left or right invariant.  This result is already known in the presence of Haar weights.  We establish it in the more general framework of~\cite{Soltan-Woronowicz:Multiplicative_unitaries}.

\section{Invariants are constant}
\label{sec:invariants_constant}

Let \((C,\Comult_C)\) and \((A,\Comult_A)\) be two quantum groups in the sense of~\cite{Soltan-Woronowicz:Multiplicative_unitaries}.  That is, they are obtained from modular multiplicative unitaries \(\Multunit^C\in\U(\Hils_C\otimes\Hils_C)\) and \(\Multunit^A\in\U(\Hils_A\otimes\Hils_A)\) for certain Hilbert spaces \(\Hils_C\) and~\(\Hils_A\).  Let \(\multunit^C\in\U\Mult(\hat{C}\otimes C)\) and \(\multunit^A\in\U\Mult(\hat{A}\otimes A)\) be their reduced bicharacters.

Being constructed from modular multiplicative unitaries, our locally compact quantum groups are concrete \(\Cst\)\nb-algebras, represented on some Hilbert space.  However, several non-equivalent multiplicative unitaries may give the same locally compact quantum group, that is, isomorphic pairs \((C,\Comult_C)\).  Therefore, we distinguish between elements of \(\Cst\)\nb-algebras such as \(\hat{C}\otimes C\) and the Hilbert space operators they generate in the representation of \(\hat{C}\otimes C\).  For a unitary multiplier~\(U\) of \(\hat{C}\otimes C\), we write~\(\mathbb{U}\) for~\(U\) considered as an operator on the Hilbert space~\(\Hils_C\otimes\Hils_C\), where \(\hat{C},C\subseteq\Bound(\Hils_C)\).  In particular, this applies to the reduced bicharacter \(\multunit^C\in\U\Mult(\hat{C}\otimes C)\) and the modular multiplicative unitary \(\Multunit^C\in\U(\Hils_C\otimes\Hils_C)\).  Whereas~\(\multunit^C\) is uniquely determined by \((C,\Comult_C)\), \(\Multunit^C\) is not unique in any sense.

Recall the following properties:
\begin{alignat}{2}
  \label{eq:pentagon}
  \Multunit^C_{23}\Multunit^C_{12}
  &= \Multunit^C_{12}\Multunit^C_{13}\Multunit^C_{23}
  &\qquad&
  \text{in \(\U(\Hils_C\otimes \Hils_C \otimes \Hils_C)\),}\\
  \label{eq:Delta_via_W}
  \Comult_C(x)
  &= \Multunit^C(x\otimes 1)(\Multunit^C)^*
  &\qquad&
  \text{in \(\Bound(\Hils_C\otimes \Hils_C)\) for all \(x\in C\),}\\
  \label{eq:hat_Delta_via_W}
  \Comult_{\hat{C}}(y)
  &= \Flip(\Multunit^C)^*(1\otimes y)\Multunit^C\Flip
  &\qquad&
  \text{in \(\Bound(\Hils_C\otimes \Hils_C)\) for all \(y\in \hat{C}\),}\\
  \label{eq:Delta_W}
  (\Id_{\hat{C}}\otimes \Comult_C)(\multunit^C)
  &= \multunit^C_{12}\multunit^C_{13}
  &\qquad& \text{in \(\hat{C}\otimes C\otimes C\),}\\
  \label{eq:hat_Delta_W}
  (\Comult_{\hat{C}}\otimes \Id_C)(\multunit^C)
  &= \multunit^C_{23}\multunit^C_{13}
  &\qquad& \text{in \(\hat{C}\otimes \hat{C}\otimes C\).}
\end{alignat}
Here \(\Sigma\in\U(\Hils_C\otimes\Hils_C)\) denotes the coordinate flip.

\begin{theorem}
 \label{the:co-invariant_type_operators}
 Let~\(\Hils\) be a Hilbert space and let \(\Multunit\in \Bound(\Hils\otimes\Hils)\) be a modular multiplicative unitary.  If \(a,b\in \Bound(\Hils)\) satisfy \(\Multunit(a\otimes 1)=(1\otimes b)\Multunit\), then \(a=b=\lambda 1\) for some \(\lambda\in\C\).  More generally, if \(a,b\in \Mult(\Comp(\Hils)\otimes D)\) for some \(\Cst\)\nb-algebra~\(D\) satisfy \(\Multunit_{12} a_{13} = b_{23}\Multunit_{12}\), then \(a=b\in \C\cdot 1_\Hils\otimes \Mult(D)\).
\end{theorem}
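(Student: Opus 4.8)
The plan is to deduce the general statement from the scalar case $D=\C$ and then to attack that case. For a functional $\mu\in D^*$ put $a^\mu\defeq(\Id\otimes\mu)(a)$ and $b^\mu\defeq(\Id\otimes\mu)(b)$ in $\Mult(\Comp(\Hils))=\Bound(\Hils)$; slicing the $D$\nb-leg of $\Multunit_{12}a_{13}=b_{23}\Multunit_{12}$ turns the hypothesis into $\Multunit(a^\mu\otimes1)=(1\otimes b^\mu)\Multunit$. Once the scalar case is known this gives $a^\mu=b^\mu\in\C\cdot1_\Hils$ for every~$\mu$; since the slices separate points and an element of $\Mult(\Comp(\Hils)\otimes D)$ all of whose $\Hils$\nb-slices are scalar lies in $1_\Hils\otimes\Mult(D)$, I conclude $a=b\in\C\cdot1_\Hils\otimes\Mult(D)$. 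Everything therefore rests on the first assertion.

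So suppose $\Multunit(a\otimes1)=(1\otimes b)\Multunit$ with $a,b\in\Bound(\Hils)$, and write $C,\hat{C}\subseteq\Bound(\Hils)$ for the $\Cst$\nb-algebras generated by the right and left slices of~$\Multunit$, so that \eqref{eq:Delta_via_W}–\eqref{eq:hat_Delta_W} hold. First I would pin down where $a,b$ live. Slicing the first leg by an arbitrary functional shows $b\cdot(\omega\otimes\Id)(\Multunit)\in C$, hence $bC\subseteq C$; taking adjoints shows $(a^*,b^*)$ solves the same equation, so $b^*C\subseteq C$ and thus $b\in\Mult(C)$. Slicing the second leg gives symmetrically $a\in\Mult(\hat{C})$.

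Next I would upgrade the hypothesis to genuine invariance. Applying the $^*$\nb-homomorphism $\Id\otimes\Comult_C$ to $1\otimes b=\Multunit(a\otimes1)\Multunit^*$, using $(\Id\otimes\Comult_C)(\Multunit)=\Multunit_{12}\Multunit_{13}$ from \eqref{eq:Delta_W} and the hypothesis in the legs $13$, the right-hand side collapses to $1\otimes1\otimes b$, giving $\Comult_C(b)=1\otimes b$. Dually, applying $\Comult_{\hat{C}}\otimes\Id$ to $a\otimes1=\Multunit^*(1\otimes b)\Multunit$ and using \eqref{eq:hat_Delta_W} with the hypothesis in the legs $23$ yields $\Comult_{\hat{C}}(a)=1\otimes a$. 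By \eqref{eq:Delta_via_W} and \eqref{eq:hat_Delta_via_W} these say $\Multunit(b\otimes1)=(1\otimes b)\Multunit$ and $\Multunit(a\otimes1)=(1\otimes a)\Multunit$, i.e.\ $(b,b)$ and $(a,a)$ separately solve the original equation. It is therefore enough to treat the diagonal case $a=b$, for then $a=\lambda1$ and $b=\lambda'1$ and feeding the scalars back into the hypothesis forces $\lambda=\lambda'$.

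It remains to show that a single $a\in\Mult(C)\cap\Mult(\hat{C})$ with $\Multunit(a\otimes1)=(1\otimes a)\Multunit$ is a scalar. Since $C$ and~$\hat{C}$ act nondegenerately, $a\in C''\cap\hat{C}''$, so it suffices to prove $C''\cap\hat{C}''=\C\cdot1$; for a group acting on $L^2(G)$ this is the elementary fact that a multiplication operator which is also a convolution operator is constant, and with Haar weights present the general case follows by pairing the invariance $\Comult_C(a)=1\otimes a$ with the invariant weight. The hard part—and the reason the theorem needs a genuinely new argument—is to supply this analytic input in the absence of Haar weights. Here I would exploit the modularity of~$\Multunit$: the positive operator~$Q$ provided by manageability commutes with~$\Multunit$ and implements the scaling automorphisms of $C$ and~$\hat{C}$, and I expect its modular data to yield a faithful weight-substitute against which the left-invariant~$a$ can be tested, forcing $a\in\C\cdot1$. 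Carrying out this last, analytic step—rather than the algebraic reductions above—is where I expect the main difficulty to lie.
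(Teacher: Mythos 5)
Your preliminary reductions are correct and worth having: slicing the $D$\nb-leg to reduce to the scalar case matches the paper's own final step, and the observations that $b\in\Mult(C)$, $a\in\Mult(\hat{C})$, that $\Comult_C(b)=1\otimes b$ and $\Comult_{\hat{C}}(a)=1\otimes a$, and that one may pass to the diagonal case are all sound. But the proof stops exactly where the theorem begins. You reduce everything to the claim that an invariant element of $\Mult(C)\cap\Mult(\hat C)$ (or, as you put it, $C''\cap\hat{C}''=\C\cdot 1$) is scalar, and then write that you ``expect'' the modular data of $Q$ to yield a faithful weight-substitute forcing this. That is not an argument; and the statement $C''\cap\hat{C}''=\C\cdot1$ is not easier than the theorem itself --- in the presence of Haar weights both are classical, and in their absence both are precisely what needs a new proof. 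So the reduction is essentially circular in difficulty, and the analytic core is missing.

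For comparison, the paper does not pass through invariance or through any weight-like object at all. It attacks $\Multunit(a\otimes1)=(1\otimes b)\Multunit$ directly with the modularity condition: writing the hypothesis as an identity of matrix coefficients
\((x\otimes y\mid\Multunit\mid az\otimes u)=(x\otimes b^*y\mid\Multunit\mid z\otimes u)\)
for \(y\in\dom(Q)\), \(u\in\dom(Q^{-1})\), the defining relation between \(\Multunit\) and \(\widetilde{\Multunit}\) converts this into
\(\bigl(\conj{az}\otimes Qy\bigm|\widetilde{\Multunit}\bigm|\conj{x}\otimes Q^{-1}u\bigr)=\bigl(\conj{z}\otimes Qb^*y\bigm|\widetilde{\Multunit}\bigm|\conj{x}\otimes Q^{-1}u\bigr)\); since the vectors \(\widetilde{\Multunit}(\conj{x}\otimes Q^{-1}u)\) are dense, \(\widetilde{\Multunit}\) can simply be erased, and reversing the computation yields \(a\otimes1=1\otimes b\) outright. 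This requires \(b^*\dom(Q)\subseteq\dom(Q)\), which is then removed by a Gaussian regularisation \(R_n(b)=\int Q^{-\ima t}bQ^{\ima t}\delta_n(t)\,\dd t\) (using \(\Multunit^*(\hat{Q}\otimes Q)\Multunit=\hat{Q}\otimes Q\) to preserve the hypothesis) together with a Phragm\'en--Lindel\"of/Cauchy argument on the strip. If you want to complete your proposal, this double use of modularity --- once to strip \(\Multunit\) from the equation, once to justify the domain manipulation --- is the missing content; no weight-substitute is ever constructed.
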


\begin{proof}
  Define the operators \(\hat{Q}\), \(Q\), and~\(\widetilde{\Multunit}\) as in \cite{Soltan-Woronowicz:Remark_manageable}*{Definition 2.1}.  First we prove the assertion without~\(D\) and under the additional assumption \(b^*\dom(Q)\subseteq \dom(Q)\).  Our assumption \(\Multunit(a\otimes 1)=(1\otimes b)\Multunit\) means
 \[
 (x\otimes y \mid \Multunit \mid az\otimes u)
 = (x\otimes b^*y\mid \Multunit \mid z\otimes u)
 \]
 for all \(x, z\in\Hils\), \(y\in\dom(Q)\) and \(u\in\dom(Q^{-1})\).  The modularity condition for~\(\Multunit\) yields
 \[
   \bigl(\conj{a z}\otimes Qy \bigm| \widetilde{\Multunit} \bigm| \conj{x}\otimes Q^{-1}u \bigr)
    = \bigl(\conj{z}\otimes Qb^* y \bigm| \widetilde{\Multunit} \bigm| \conj{x}\otimes Q^{-1}u\bigr).
  \]
  In this formula, \(\widetilde{\Multunit}(\conj{x}\otimes Q^{-1}u)\) runs through a dense subset of \(\conj{\Hils}\otimes\Hils\).  Therefore, we may replace \(\widetilde{\Multunit}(\conj{x}\otimes Q^{-1}u)\) by \(\conj{x}\otimes Q^{-1}u\) and get
  \[
  (\conj{a z}\otimes Qy \mid \conj{x}\otimes Q^{-1}u)
  = (\conj{z}\otimes Qb^* y \mid \conj{x}\otimes Q^{-1}u),
  \]
  Now we reverse the above computation without \(\Multunit\) and~\(\widetilde{\Multunit}\), using the self-adjointness of~\(Q\).  This yields
  \[
  (x\otimes y \mid az\otimes u)
  = (x\otimes b^*y\mid z\otimes u)
  \]
  for all \(x,y,z,u\in\Hils\).  Hence \(a\otimes 1=1\otimes b\), so that \(a=b\in\C\cdot1\).

  To remove the assumption \(b^*\dom(Q)\subseteq \dom(Q)\), we regularise \(a\) and~\(b\).  For \(a\in\Bound(\Hils)\) and \(n\in\N\), we define
  \[
  \widehat{R}_n(a) \defeq \int_{-\infty}^{+\infty} \widehat{Q}^{-\ima t}a\widehat{Q}^{\ima t}\delta_n(t) \,\dd t
  \quad\text{and}\quad
  R_n(a) \defeq \int_{-\infty}^{+\infty} Q^{-\ima t} a Q^{\ima t} \delta_n(t) \,\dd t,
  \]
  where
  \[
  \delta_n(t) \defeq \sqrt{\frac{n}{2\pi}} \exp\left(-\frac{nt^2}{2}\right)
  \]
  is a \(\delta\)\nb-like sequence of Gaussian functions.  Since
  \[
  \Multunit^*(\widehat{Q}\otimes Q)\Multunit=\widehat{Q}\otimes Q,
  \]
  our condition \(\Multunit(a\otimes 1)=(1\otimes b)\Multunit\) implies
  \[
  \Multunit(\widehat{R}_n(a)\otimes 1) =(1\otimes R_n(b))\Multunit.
  \]
  We will show below that
  \begin{equation}
    \label{eq:domain_condition}
    R_n(b)^*\dom(Q)\subseteq \dom(Q).
  \end{equation}
  The first part of the proof now yields \(\widehat{R}_n(a)=R_n(b)=\lambda_n1\) for all \(n\in\N\).  If \(n\to\infty\), then \(\widehat{R}_n(a)\) and \(R_n(b)\) converge weakly towards \(a\) and~\(b\), respectively.  Hence we get \(a=b=\lambda 1\) for some \(\lambda\in\C\) in full generality.

  It remains to establish~\eqref{eq:domain_condition}.  Let \(x,y\in\dom(Q)\).  Then the function
  \[
  f_{x,y}(z) \defeq (Q^{\ima(\conj{z}-\ima)} x\mid b^*\mid Q^{\ima z}y)
  \]
  is well-defined, bounded, and continuous in the strip \(\Sigma\defeq \{z\in\C\colon -1\le \operatorname{Im} z\le 0\}\) and holomorphic in the interior of~\(\Sigma\).  In particular, for \(t\in\R\colon\)
  \begin{equation}
    \label{eq:holomorphic_extension}
    f_{x,y}(t) = (Qx\mid Q^{-\ima t}b^*Q^{\ima t} \mid y),\qquad
    f_{x,y}(t-\ima) = (x\mid Q^{-\ima t}b^*Q^{\ima t} \mid Qy).
  \end{equation}
  By Cauchy's Theorem, the integrals of \(f_{x,y}(z)\delta_n(z)\) along the lines \(\R+\ima s\) for \(0\le 1\le s\) do not depend on~\(s\).  For \(s=0\) and \(s=1\), \eqref{eq:holomorphic_extension} shows that the integrals are \((Qx\mid R_n(b)^*\mid y)\) and
  \[
  \biggl( x \biggm| \int_{-\infty}^{+\infty} Q^{-\ima t}b^*Q^{\ima t}\delta_n(t-\ima)\,\dd t \biggm| Qy \biggr),
  \]
  respectively.  Their equality shows that \((Qx\mid R_n(b)^* y)\) depends continuously on~\(x\).  This yields \(R_n(b)^*y\in\dom(Q^*)=\dom(Q)\), that is, \eqref{eq:domain_condition}.

  Finally, we add the coefficient algebra~\(D\).  If \(a,b\in\Mult(\Comp(\Hils)\otimes D)\) satisfy \(\Multunit_{12} a_{13}= b_{23}\Multunit_{12}\) in \(\Mult(\Comp(\Hils\otimes\Hils)\otimes D)\), then the first part of the theorem applies to the slices \((\Id\otimes\mu)(a)\) and \((\Id\otimes\mu)(b)\) for all \(\mu\in D'\).  Thus \((\Id\otimes\mu)(a) = (\Id\otimes\mu)(b) = \lambda_\mu\cdot 1\) for all \(\mu\in D'\).  This implies that \(a=b\in \C\cdot 1 \otimes\Mult(D)\).
\end{proof}

\begin{corollary}
  \label{cor:C_co-invariant_are_scalar_multiple}
  Let \((C,\Comult_C)\) be a quantum group constructed from a manageable \textup(or, more generally, from a modular\textup) multiplicative unitary \(\Multunit\in\Bound(\Hils\otimes\Hils)\).  If \(c\in\Mult(C)\), then \(\Comult_C(c)\in\Mult(C\otimes 1)\) or \(\Comult_C(c)\in\Mult(1\otimes C)\) if and only if \(c\in\C\cdot1\).

  More generally, if~\(D\) is a \(\Cst\)\nb-algebra and \(c\in \Mult(C\otimes D)\), then \((\Comult_C\otimes\Id_D)(c) \in\Mult(C\otimes 1\otimes D)\) or \((\Comult_C\otimes\Id_D)(c) \in\Mult(1\otimes C\otimes D)\) if and only if \(c\in\C\cdot1 \otimes \Mult(D)\).
\end{corollary}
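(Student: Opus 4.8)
The plan is to read off both implications from Theorem~\ref{the:co-invariant_type_operators} together with the formula $\Comult_C(x)=\Multunit(x\otimes1)\Multunit^*$. The direction ``$\Leftarrow$'' is immediate: if $c\in\C\cdot1\otimes\Mult(D)$, write $c=1\otimes d$ with $d\in\Mult(D)$; then $(\Comult_C\otimes\Id_D)(c)=1\otimes1\otimes d$ lies in both $\Mult(C\otimes1\otimes D)$ and $\Mult(1\otimes C\otimes D)$. For ``$\Rightarrow$'' I first extend~\eqref{eq:Delta_via_W} to multipliers with coefficients, so that $(\Comult_C\otimes\Id_D)(c)=\Multunit_{12}\,c_{13}\,\Multunit_{12}^*$ holds in $\Mult(\Comp(\Hils\otimes\Hils)\otimes D)$, where legs $1,2$ carry $\Hils$ and leg~$3$ carries~$D$. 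There are then two cases, according to which leg of the first tensor factor is trivial.

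First suppose $(\Comult_C\otimes\Id_D)(c)\in\Mult(1\otimes C\otimes D)$, say it equals $b_{23}$ for some $b\in\Mult(C\otimes D)$. Multiplying the identity $b_{23}=\Multunit_{12}c_{13}\Multunit_{12}^*$ on the right by $\Multunit_{12}$ gives exactly $\Multunit_{12}\,c_{13}=b_{23}\,\Multunit_{12}$, which is the hypothesis of the second part of Theorem~\ref{the:co-invariant_type_operators}. Hence $c=b\in\C\cdot1\otimes\Mult(D)$, as desired.

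The remaining case, $(\Comult_C\otimes\Id_D)(c)\in\Mult(C\otimes1\otimes D)$, is the one requiring a genuine new idea, because the relation it produces, $\Multunit_{12}\,c_{13}=b_{13}\,\Multunit_{12}$, is \emph{not} of the one-sided form covered by the Theorem: the output now sits on leg~$1$ rather than leg~$2$, and no combination of adjoints and coordinate flips of $\Multunit$ or of its dual $\Flip\Multunit^*\Flip$ brings it into that form. The plan is to reduce to the previous case by passing to the co-opposite quantum group $(C,\Comult^{\copp}_C)$, with $\Comult^{\copp}_C(x)\defeq\Flip\,\Comult_C(x)\,\Flip$, which is again associated to a modular multiplicative unitary. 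Conjugating by $\Flip_{12}$ turns the assumption into $(\Comult^{\copp}_C\otimes\Id_D)(c)=\Flip_{12}\,b_{13}\,\Flip_{12}=b_{23}\in\Mult(1\otimes C\otimes D)$, so the first case, applied now to this modular multiplicative unitary, yields $c=b\in\C\cdot1\otimes\Mult(D)$.

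I expect the main obstacle to be precisely this reduction: ensuring that the co-opposite is again given by a modular multiplicative unitary, so that the first case legitimately applies to it. Within the framework of~\cite{Soltan-Woronowicz:Multiplicative_unitaries} this is available, but if one prefers a self-contained route one can argue directly from $\Multunit_{12}c_{13}=b_{13}\Multunit_{12}$: slicing the second leg with functionals identifies $b$ with $c$ and shows that $c$ commutes with $\hat C$, and the crux then becomes the irreducibility statement $\hat C'\cap\Mult(C)=\C\cdot1$ for the pair $(C,\hat C)$ attached to a modular multiplicative unitary. In either route the scalar case $D=\C$ is the heart of the matter; the coefficient algebra~$D$ is carried along automatically, exactly as in the final paragraph of the proof of Theorem~\ref{the:co-invariant_type_operators}.
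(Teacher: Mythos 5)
Your treatment of the case \((\Comult_C\otimes\Id_D)(c)\in\Mult(1\otimes C\otimes D)\) coincides with the paper's: rewrite via~\eqref{eq:Delta_via_W} as \(\Multunit_{12}c_{13}=b_{23}\Multunit_{12}\) and apply Theorem~\ref{the:co-invariant_type_operators}. For the other case the paper is more economical than your reduction to the co-opposite: it applies \(R_C\otimes\Id_D\) to \(c\) and \(c'\) and uses \(\Comult_C\circ R_C=\flip\circ(R_C\otimes R_C)\circ\Comult_C\) together with \(R_C(1)=1\) to convert \((\Comult_C\otimes\Id_D)(c)=c'_{13}\) into \((\Comult_C\otimes\Id_D)(a)=a'_{23}\) for \(a=(R_C\otimes\Id_D)(c)\), and then invokes the first case \emph{for the same quantum group}. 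Your route is correct but carries a heavier prerequisite: the fact you flag --- that \((C,\Comult_C^{\copp})\) is again generated by a modular multiplicative unitary --- is true in this framework, but the way one establishes it (cf.\ the remark preceding Example~\ref{ex:opposite_to_coopposite_from_definition_of_modularity}) is precisely via the unitary antipode, which identifies the co-opposite with the opposite quantum group realised by \((\Multunit^*)^{\transpose\otimes\transpose}\) on \(\conj{\Hils}\); so your detour contains the paper's argument plus the transport of \(c\) to \(\conj{\Hils}\), and the first case does not literally apply ``on the same Hilbert space with the same embedding of \(C\)'' without that transport. You are right that \(\Multunit_{12}c_{13}=b_{13}\Multunit_{12}\) is not of the shape covered by the theorem. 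Be aware, however, that your ``self-contained'' fallback is circular: slicing the second leg does show \(b=c\) and \(c\in\hat{C}'\), but conversely any \(c\in\hat{C}'\cap\Mult(C)\) satisfies \(\Comult_C(c)=\Multunit(c\otimes1)\Multunit^*=c\otimes1\) because \(c\otimes 1\) commutes with \(\Multunit\in\Mult(\hat{C}\otimes C)\); hence \(\hat{C}'\cap\Mult(C)=\C\cdot1\) is exactly equivalent to the case you are trying to prove and cannot be taken as an independent input here.
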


\begin{proof}
  Using~\eqref{eq:Delta_via_W}, we rewrite the equation \(\Comult_C(c) = 1\otimes c'\) for \(c,c'\in\Mult(C\otimes D)\) as \(\Multunit^C_{12}c_{13}=c'_{23}\Multunit^C_{12}\).  Now Theorem~\ref{the:co-invariant_type_operators} yields \(c\in\C\cdot1 \otimes \Mult(D)\).  If \(\Comult_C(c)=c'\otimes 1\) instead, then we apply the unitary antipodes.  With \(a \defeq (R_C\otimes\Id_D)(c)\) and \(a' \defeq (R_C\otimes\Id_D)(c')\), we get \(\Comult_C(a) = 1\otimes a'\).  The argument above shows \(a\in\C\cdot 1\otimes \Mult(D)\) and hence \(c\in\C\cdot 1\otimes\Mult(D)\).
\end{proof}

\section{Bicharacters}
\label{sec:linking_unitaries}

\begin{definition}
  \label{def:linking_unitary}
  A unitary \(V\in\U\Mult(\hat{C}\otimes A)\) is called a \emph{bicharacter from \(C\) to~\(A\)} if
  \begin{alignat}{2}
    \label{eq:linking_unitary_comult_C_hat}
    (\Comult_{\hat{C}}\otimes\Id_A)V
    &=V_{23}V_{13}
    &\qquad &\text{in \(\U\Mult(\hat{C}\otimes\hat{C}\otimes A)\)},\\
    \label{eq:linking_unitary_comult_A}
    (\Id_{\hat{C}}\otimes\Comult_A)V
    &=V_{12}V_{13}
    &\qquad &\text{in \(\U\Mult(\hat{C}\otimes A\otimes A)\)}.
  \end{alignat}
\end{definition}

\begin{lemma}
  \label{lemm:equiv_criterion_of_linkunit}
   A unitary \(\mathbb{V}\in \U(\Hils_C\otimes \Hils_A)\) comes from a bicharacter \(V\in\U\Mult(\hat{C}\otimes A)\) \textup(which is necesssarily unique\textup) if and only if
  \begin{alignat}{2}
    \label{eq:linking_unitary_pentagon_C}
    \mathbb{V}_{23}\Multunit^C_{12}
    &= \Multunit^C_{12}\mathbb{V}_{13}\mathbb{V}_{23}
    &\qquad &
    \text{in \(\U(\Hils_C\otimes\Hils_C\otimes\Hils_A)\)},\\
    \label{eq:linking_unitary_pentagon_A}
    \Multunit_{23}^A \mathbb{V}_{12}
    &= \mathbb{V}_{12}\mathbb{V}_{13}\Multunit_{23}^A
    &\qquad &
    \text{in \(\U(\Hils_C\otimes\Hils_A\otimes\Hils_A)\)}.
  \end{alignat}
\end{lemma}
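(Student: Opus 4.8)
The plan is to reduce both implications to a single membership statement and then import the representation theory of modular multiplicative unitaries. First I would record the purely algebraic observation that, \emph{for a given} \(V\in\U\Mult(\hat{C}\otimes A)\) with associated operator \(\mathbb{V}\), the two defining conditions of a bicharacter are \emph{equivalent} to the two pentagonal equations. Substituting \eqref{eq:hat_Delta_via_W} into the left-hand side of \eqref{eq:linking_unitary_comult_C_hat} turns it into \(\Flip_{12}(\Multunit^C_{12})^{*}\mathbb{V}_{23}\Multunit^C_{12}\Flip_{12}=\mathbb{V}_{23}\mathbb{V}_{13}\); conjugating by \(\Flip_{12}\), which swaps the first two legs and hence interchanges \(\mathbb{V}_{13}\) and \(\mathbb{V}_{23}\), rewrites this as exactly \eqref{eq:linking_unitary_pentagon_C}. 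In the same way \eqref{eq:Delta_via_W} turns \eqref{eq:linking_unitary_comult_A} directly into \eqref{eq:linking_unitary_pentagon_A}. This already yields the forward implication. Since the spatial inclusion \(\hat{C}\otimes A\hookrightarrow\Bound(\Hils_C\otimes\Hils_A)\) is faithful and extends injectively to multipliers, \(V\) is uniquely determined by \(\mathbb{V}\), which is the asserted uniqueness. By the equivalence just noted, the converse amounts to showing that \emph{any} unitary \(\mathbb{V}\in\U(\Hils_C\otimes\Hils_A)\) satisfying \eqref{eq:linking_unitary_pentagon_C}--\eqref{eq:linking_unitary_pentagon_A} already lies in \(\Mult(\hat{C}\otimes A)\).

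For the membership I would treat the two legs separately using the two pentagons. Equation \eqref{eq:linking_unitary_pentagon_A} is verbatim the condition that \(\mathbb{V}\) be a representation of the modular multiplicative unitary \(\Multunit^A\) on the Hilbert space \(\Hils_C\); indeed, setting \(\mathbb{V}=\Multunit^A\) it degenerates to the pentagon \eqref{eq:pentagon}. By the representation theory of modular multiplicative unitaries in \cite{Soltan-Woronowicz:Multiplicative_unitaries}, such a unitary is the operator of some \(V\in\U\Mult(\Comp(\Hils_C)\otimes A)\) with \((\Id_{\hat{C}}\otimes\Comult_A)V=V_{12}V_{13}\); in particular its second leg lies in \(A\). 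Symmetrically, \eqref{eq:linking_unitary_pentagon_C} expresses that \(\mathbb{V}\) is a representation of the dual quantum group \(\hat{C}\), whose multiplicative unitary \(\Flip(\Multunit^C)^{*}\Flip\) is again modular because \(\Multunit^C\) is; the same theorem then yields \(V\in\U\Mult(\hat{C}\otimes\Comp(\Hils_A))\) with \((\Comult_{\hat{C}}\otimes\Id_A)V=V_{23}V_{13}\), so the first leg lies in \(\hat{C}\). The one piece of bookkeeping here is to check that, under the standard flip-and-adjoint identification of the dual multiplicative unitary, \eqref{eq:linking_unitary_pentagon_C} really is the representation condition for \(\hat{C}\); this is the duality counterpart of the computation already carried out for \(A\), and its internal consistency is confirmed by the fact that \(\mathbb{V}=\Multunit^C\) reduces \eqref{eq:linking_unitary_pentagon_C} to \eqref{eq:pentagon}.

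Finally I would combine the two leg conditions by the standard leg-algebra fact that
\[
  \Mult\bigl(\hat{C}\otimes\Comp(\Hils_A)\bigr)\cap\Mult\bigl(\Comp(\Hils_C)\otimes A\bigr)=\Mult(\hat{C}\otimes A)
\]
for nondegenerately represented \(\hat{C}\subseteq\Bound(\Hils_C)\) and \(A\subseteq\Bound(\Hils_A)\): concretely, \(\mathbb{V}\in\Mult(\Comp(\Hils_C)\otimes A)\) controls the second leg, while \(\mathbb{V}\in\Mult(\hat{C}\otimes\Comp(\Hils_A))\) forces all slices \((\Id\otimes\rho)(\mathbb{V})\) with \(\rho\in\Bound(\Hils_A)_*\) into \(\Mult(\hat{C})\), and together these place \(\mathbb{V}\) in \(\Mult(\hat{C}\otimes A)\). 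The two comultiplication identities produced above are then precisely \eqref{eq:linking_unitary_comult_C_hat} and \eqref{eq:linking_unitary_comult_A}, so \(V\) is a bicharacter. I expect the main obstacle to be the pair of membership statements in the second paragraph: passing from a bare pentagonal relation between Hilbert space operators to membership in the \(\Cst\)-algebra \(\Mult(\Comp(\Hils_C)\otimes A)\) is exactly where modularity (manageability) of the multiplicative unitary is indispensable—for a general multiplicative unitary the slices of \(\mathbb{V}\) need not close up into \(A\)—and this is the step I would import from \cite{Soltan-Woronowicz:Multiplicative_unitaries} rather than reprove. The combination displayed above is, by contrast, a soft statement about multiplier algebras of minimal tensor products, provable by a routine slice-map and approximate-unit argument.
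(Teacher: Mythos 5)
The reduction in your first paragraph and the two separate membership statements in your second paragraph agree with the paper: the bicharacter identities are equivalent to the two pentagonal equations via \eqref{eq:Delta_via_W} and \eqref{eq:hat_Delta_via_W}, and each pentagon, rewritten as \(\mathbb{V}_{13}=\mathbb{V}_{12}^*\Multunit^A_{23}\mathbb{V}_{12}(\Multunit^A_{23})^*\) resp.\ \(\mathbb{V}_{13}=(\Multunit^C_{12})^*\mathbb{V}_{23}\Multunit^C_{12}\mathbb{V}_{23}^*\), does give \(\mathbb{V}\in\Mult(\Comp(\Hils_C)\otimes A)\) resp.\ \(\mathbb{V}\in\Mult(\hat C\otimes\Comp(\Hils_A))\) (this is where \(\Multunit^A\in\U\Mult(\hat A\otimes A)\), hence modularity, enters). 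The gap is your final combination step: the identity \(\Mult(\hat C\otimes\Comp(\Hils_A))\cap\Mult(\Comp(\Hils_C)\otimes A)=\Mult(\hat C\otimes A)\) is \emph{not} a routine fact about multiplier algebras of spatial tensor products --- it is false in general. Take \(A=B=\Contvin(\R)\) acting by multiplication on \(L^2(\R)\) and let \(x\) be multiplication by \((s,t)\mapsto f(s-t)\) on \(L^2(\R^2)\) with \(f=\operatorname{sign}\in L^\infty(\R)\), so that \(x\) is a self-adjoint unitary. As a field over the first variable, \(x\) is \(s\mapsto\lambda_s M_{\check f}\lambda_s^*\) with \(\lambda\) the translation group, which is bounded and strictly continuous, so \(x\in\Mult\bigl(\Contvin(\R)\otimes\Comp(L^2(\R))\bigr)\); by symmetry \(x\in\Mult\bigl(\Comp(L^2(\R))\otimes\Contvin(\R)\bigr)\). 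But \(x\notin\Mult(\Contvin(\R^2))=C_b(\R^2)\), because \(f\) is not almost everywhere equal to a continuous function. The same example defeats your slice criterion: every normal slice \((\Id\otimes\rho)(x)\) is multiplication by a convolution \(f*g\) with \(g\in L^1(\R)\), hence lies in \(C_b(\R)=\Mult(\Contvin(\R))\), yet \(x\) is not a multiplier of the tensor product. Controlling the two legs separately does not control them jointly.

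The paper avoids this by running the two steps sequentially rather than in parallel. Once \(\mathbb{V}\in\Mult(\Comp(\Hils_C)\otimes A)\) is established from \eqref{eq:linking_unitary_pentagon_A}, the factors \(\mathbb{V}_{23}\) and \(\mathbb{V}_{23}^*\) occurring in \((\Multunit^C_{12})^*\mathbb{V}_{23}\Multunit^C_{12}\mathbb{V}_{23}^*\) already have their last leg in \(A\), so the entire product lies in \(\Mult(\hat C\otimes\Comp(\Hils_C)\otimes A)\) --- both legs controlled simultaneously --- and slicing the middle leg of \(\mathbb{V}_{13}\) with a normal state yields \(\mathbb{V}\in\Mult(\hat C\otimes A)\) in one stroke. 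Replacing your intersection argument by this bootstrap repairs the proof; as written, the last step does not stand.
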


\begin{proof}
  The representation of \(\hat{C}\otimes\hat{C}\otimes A\) on \(\Hils_C\otimes\Hils_C\otimes\Hils_A\) is faithful.  Hence a bicharacter \(V\in\U\Mult(\hat{C}\otimes A)\) is determined by its image \(\mathbb{V}\in\U(\Hils_C\otimes\Hils_A)\), and \eqref{eq:linking_unitary_comult_C_hat} and~\eqref{eq:linking_unitary_comult_A} are equivalent equations of unitary operators on \(\Hils_C\otimes\Hils_C\otimes\Hils_A\).  Using~\eqref{eq:hat_Delta_via_W}, we rewrite~\eqref{eq:linking_unitary_comult_C_hat} as \(\Flip_{12}(\Multunit^C_{12})^* \mathbb{V}_{23} \Multunit^C_{12}\Flip_{12} = \mathbb{V}_{23}\mathbb{V}_{13}\).  This is equivalent to~\eqref{eq:linking_unitary_pentagon_C}.  A similar argument shows that~\eqref{eq:linking_unitary_comult_A} is equivalent to~\eqref{eq:linking_unitary_pentagon_A}.

  It remains to show that a unitary~\(\mathbb{V}\) on \(\Hils_C\otimes\Hils_A\) that satisfies \eqref{eq:linking_unitary_pentagon_C} and~\eqref{eq:linking_unitary_pentagon_A} necessarily belongs to \(\Mult(\hat{C}\otimes A)\).  We argue as in the proof of \cite{Woronowicz:Multiplicative_Unitaries_to_Quantum_grp}*{Theorem 1.6.2}.  The unitary~\(\mathbb{V}\) is adapted to~\(\Multunit^A\) in the sense of~\cite{Woronowicz:Multiplicative_Unitaries_to_Quantum_grp} by~\eqref{eq:linking_unitary_pentagon_A}.  Rewriting~\eqref{eq:linking_unitary_pentagon_A} as \(\mathbb{V}_{13} = \mathbb{V}_{12}^* \Multunit_{23}^A \mathbb{V}_{12}(\Multunit_{23}^A)^*\), we see that \(\mathbb{V}\in\Mult(\Comp(\Hils_C)\otimes A)\).  Then~\eqref{eq:linking_unitary_pentagon_C} in the form \(\mathbb{V}_{13} = (\Multunit^C_{12})^*\mathbb{V}_{23}\Multunit^C_{12}\mathbb{V}_{23}^*\) shows that \(\mathbb{V}_{13}\in\Mult(\hat{C}\otimes\Comp(\Hils_C)\otimes A)\), so that \(\mathbb{V}\in\Mult(\hat{C}\otimes A)\) as asserted.
\end{proof}

\begin{example}
  \label{ex:strong_lu}
  Equations \eqref{eq:Delta_W}--\eqref{eq:hat_Delta_W} show that a Hopf \(^*\)\nb-homomorphism \(f\colon C\to A\) yields a bicharacter \(V_f\defeq (\Id_{\hat{C}}\otimes f)\multunit^C\).  In particular, \(\multunit^C\) is a bicharacter.
\end{example}

\begin{remark}
  \label{rem:linking_unitary_indep_of_mult_unit}
  The criterion in Lemma~\ref{lemm:equiv_criterion_of_linkunit} has the merit of using only the language of multiplicative unitaries and pentagon equations.  But the same quantum group may be generated by different multiplicative unitaries.  Since~\(\multunit^C\) only depends on \((C,\Comult_C)\) by~\cite{Soltan-Woronowicz:Multiplicative_unitaries}, bicharacters from~\(C\) to~\(A\) depend only on \((A,\Comult_A)\) and \((C,\Comult_C)\).
\end{remark}

Now we define the composition of (concrete) bicharacters as in \cite{Ng:Morph_of_Mult_unit}*{Lemma 2.5}.  Let \((B,\Comult_B)\) be another quantum group.

\begin{definition}
  \label{def:composition_of_linking_unitary}
  A unitary \(\Linkunit{C}{B}\in\U\Mult(\hat{C}\otimes B)\) is called a \emph{composition} of two bicharacters \(\Linkunit{C}{A}\in\U\Mult(\hat{C}\otimes A)\) and \(\Linkunit{A}{B}\in\U\Mult(\hat{A}\otimes B)\) if its image \(\mathbb{V}^{C\to B}\) in \(\U(\Hils_C\otimes \Hils_B)\) satisfies
  \[
  \mathbb{V}^{A\to B}_{23}\mathbb{V}^{C\to A}_{12}
  =\mathbb{V}^{C\to A}_{12}\mathbb{V}^{C\to B}_{13}\mathbb{V}^{A\to B}_{23}
  \qquad \text{in \(\U(\Hils_C\otimes\Hils_A\otimes\Hils_B)\)}.
  \]
  or, equivalently,
  \begin{equation}
    \label{eq:link_unit_comp_exist}
    \mathbb{V}^{C\to B}_{13}
    = (\mathbb{V}^{C\to A}_{12})^* \mathbb{V}^{A\to B}_{23}
    \mathbb{V}^{C\to A}_{12} (\mathbb{V}^{A\to B}_{23})^*.
  \end{equation}
\end{definition}

We also briefly write \(\Linkunit{C}{B}= \Linkunit{A}{B} * \Linkunit{C}{A}\).

\begin{lemma}
  \label{lem:composition_exists}
  For any two bicharacters \(\Linkunit{C}{A}\) and \(\Linkunit{A}{B}\), there is a unique composition~\(\Linkunit{C}{B}\), and it is a bicharacter from~\(C\) to~\(B\).
\end{lemma}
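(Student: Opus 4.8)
The plan is to read the defining relation~\eqref{eq:link_unit_comp_exist} as \emph{defining} a candidate operator and then to prove that this candidate is a concrete bicharacter. Write
\[
\mathbb{X}\defeq (\mathbb{V}^{C\to A}_{12})^*\,\mathbb{V}^{A\to B}_{23}\,\mathbb{V}^{C\to A}_{12}\,(\mathbb{V}^{A\to B}_{23})^*\in\U(\Hils_C\otimes\Hils_A\otimes\Hils_B),
\]
the right-hand side of~\eqref{eq:link_unit_comp_exist}. Uniqueness is then immediate: \eqref{eq:link_unit_comp_exist} forces the image of any composition to be the operator on legs~\(1,3\) underlying~\(\mathbb{X}\), and since the representation of \(\hat{C}\otimes B\) on \(\Hils_C\otimes\Hils_B\) is faithful (as in Lemma~\ref{lemm:equiv_criterion_of_linkunit}), a composition is determined by this image. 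So everything reduces to showing that \(\mathbb{X}\) is of the form \(\mathbb{V}_{13}\) for some \(\mathbb{V}\in\U(\Hils_C\otimes\Hils_B)\) that comes from a bicharacter from~\(C\) to~\(B\).

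First I would note that \(\mathbb{X}\in\Mult(\hat{C}\otimes\Comp(\Hils_A)\otimes B)\): each factor lies in this multiplier algebra because \(\mathbb{V}^{C\to A}\in\Mult(\hat{C}\otimes A)\) and \(\mathbb{V}^{A\to B}\in\Mult(\hat{A}\otimes B)\), while \(A\) and~\(\hat{A}\) both multiply \(\Comp(\Hils_A)\) into itself. The crucial point is that \(\mathbb{X}\) does not depend on the middle leg. To see this I would pass to \(\Hils_C\otimes\Hils_A\otimes\Hils_A\otimes\Hils_B\) and prove the intertwining identity
\[
\Multunit^A_{23}\,\mathbb{X}_{124} = \mathbb{X}_{134}\,\Multunit^A_{23},
\]
where \(\mathbb{X}_{124}\) and~\(\mathbb{X}_{134}\) denote~\(\mathbb{X}\) with its \(\Hils_A\)-leg placed in the second and third slot, respectively. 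After reordering legs this is exactly the hypothesis of the general form of Theorem~\ref{the:co-invariant_type_operators} (with \(\Hils=\Hils_A\) and coefficient algebra \(D=\hat{C}\otimes B\)), which then yields \(\mathbb{X}\in\C\cdot 1_{\Hils_A}\otimes\Mult(\hat{C}\otimes B)\); that is, \(\mathbb{X}=\mathbb{V}_{13}\) for a unique unitary \(V\in\U\Mult(\hat{C}\otimes B)\).

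To establish the intertwining identity I would move \(\Multunit^A_{23}\) to the right through the four factors of \(\mathbb{X}_{124}\), using only the ``clean'' halves of the two pentagon equations: relation~\eqref{eq:linking_unitary_pentagon_A} for \(\mathbb{V}^{C\to A}\) gives \(\Multunit^A_{23}\mathbb{V}^{C\to A}_{12}=\mathbb{V}^{C\to A}_{12}\mathbb{V}^{C\to A}_{13}\Multunit^A_{23}\), while~\eqref{eq:linking_unitary_pentagon_C} for \(\mathbb{V}^{A\to B}\) gives \(\mathbb{V}^{A\to B}_{34}\Multunit^A_{23}=\Multunit^A_{23}\mathbb{V}^{A\to B}_{24}\mathbb{V}^{A\to B}_{34}\). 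Commuting the disjoint factors \(\mathbb{V}^{C\to A}_{12}\) and \(\mathbb{V}^{A\to B}_{34}\) past one another and cancelling \((\mathbb{V}^{C\to A}_{12})^*\mathbb{V}^{C\to A}_{12}\), the surviving \(\mathbb{V}^{A\to B}\)-factors reassemble precisely into~\(\mathbb{X}_{134}\), leaving \(\Multunit^A_{23}\) on the right. I expect this computation to be the main obstacle, the subtlety being that the middle leg of~\(\mathbb{X}\) carries both an \(A\)- and an \(\hat{A}\)-part, on which \(\Multunit^A\) acts differently; only the specific conjugation pattern in~\(\mathbb{X}\) makes the two pentagon relations fit together, so that a naive ``\(\Comult_A\)-invariance'' of a single leg, which is false, is replaced by the exact relation above.

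Finally, I would verify that \(\mathbb{V}\) is a concrete bicharacter by checking the two pentagon equations of Lemma~\ref{lemm:equiv_criterion_of_linkunit}. Equation~\eqref{eq:linking_unitary_pentagon_C} for~\(\mathbb{V}\), involving \(\Multunit^C\), only sees the \(\hat{C}\)-leg, which comes solely from~\(\mathbb{V}^{C\to A}\), so it follows from~\eqref{eq:linking_unitary_pentagon_C} for \(\mathbb{V}^{C\to A}\); dually, \eqref{eq:linking_unitary_pentagon_A} for~\(\mathbb{V}\), involving \(\Multunit^B\), only sees the \(B\)-leg and follows from~\eqref{eq:linking_unitary_pentagon_A} for~\(\mathbb{V}^{A\to B}\). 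Both derivations substitute \(\mathbb{X}=\mathbb{V}_{13}\) into~\eqref{eq:link_unit_comp_exist} and use that \(\Multunit^C\) and~\(\Multunit^B\) commute with the factors on which they act trivially. Lemma~\ref{lemm:equiv_criterion_of_linkunit} then provides a necessarily unique bicharacter \(V^{C\to B}\in\U\Mult(\hat{C}\otimes B)\) with image~\(\mathbb{V}\), which by construction satisfies~\eqref{eq:link_unit_comp_exist} and is therefore the desired composition; being produced by Lemma~\ref{lemm:equiv_criterion_of_linkunit}, it is a bicharacter from~\(C\) to~\(B\).
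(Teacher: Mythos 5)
Your proposal is correct and follows essentially the same route as the paper: you take the right-hand side of~\eqref{eq:link_unit_comp_exist} as the candidate operator, show it is trivial on the \(\Hils_A\)-leg by establishing the intertwining relation \(\Multunit^A_{23}\mathbb{X}_{124}=\mathbb{X}_{134}\Multunit^A_{23}\) from the same two halves of the pentagon equations and then invoking Theorem~\ref{the:co-invariant_type_operators}, and finally verify the two pentagon relations of Lemma~\ref{lemm:equiv_criterion_of_linkunit} for the resulting unitary. The paper's proof is the same argument with the computations written out in full.
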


\begin{proof}
  Define \(\tilde{V}\defeq (\mathbb{V}^{C\to A}_{12})^* \mathbb{V}^{A\to B}_{23} \mathbb{V}^{C\to A}_{12} (\mathbb{V}^{A\to B}_{23})^* \in \U\Mult(\hat{C}\otimes \Comp(\Hils_B)\otimes B)\).  We are going to use Theorem~\ref{the:co-invariant_type_operators} to show that \(\tilde{V}\in\U\Mult(\hat{C}\otimes 1\otimes B)\).
  \begin{align*}
    & \Multunit^A_{23} \tilde{V}_{124} (\Multunit^A_{23})^*\\
    & =  \Multunit^A_{23}(\mathbb{V}^{C\to A}_{12})^*
    \mathbb{V}^{A\to B}_{24}\mathbb{V}^{C\to A}_{12}
    (\mathbb{V}^{A\to B}_{24})^*(\Multunit^A_{23})^*\\
    & = (\mathbb{V}^{C\to A}_{13})^*(\mathbb{V}^{C\to A}_{12})^*
    \Multunit_{23}^A\mathbb{V}^{A\to B}_{24}
    \mathbb{V}^{C\to A}_{12}(\mathbb{V}^{A\to B}_{24})^*(\Multunit^A_{23})^*\\
    & = (\mathbb{V}^{C\to A}_{13})^*(\mathbb{V}^{C\to A}_{12})^*
    \mathbb{V}^{A\to B}_{34}\Multunit^A_{23}(\mathbb{V}^{A\to B}_{34})^*
    \mathbb{V}_{12}^{C\to A}\mathbb{V}^{A\to B}_{34}(\Multunit^A_{23})^*
    (\mathbb{V}^{A\to B}_{34})^*\\
    & = (\mathbb{V}^{C\to A}_{13})^*(\mathbb{V}^{C\to A}_{12})^*
    \mathbb{V}^{A\to B}_{34}\Multunit^A_{23}\mathbb{V}^{C\to A}_{12}
    (\Multunit^A_{23})^*(\mathbb{V}^{A\to B}_{34})^*\\
    & = (\mathbb{V}^{C\to A}_{13})^*(\mathbb{V}^{C\to A}_{12})^*
    \mathbb{V}^{A\to B}_{34}\mathbb{V}^{C\to A}_{12}\mathbb{V}^{C\to A}_{13}
    (\mathbb{V}^{A\to B}_{34})^*\\
    & = (\mathbb{V}^{C\to A}_{13})^*\mathbb{V}^{A\to B}_{34}\mathbb{V}^{C\to A}_{13}
    (\mathbb{V}^{A\to B}_{34})^* = \tilde{V}_{134};
  \end{align*}
  the first step uses~\eqref{eq:linking_unitary_pentagon_A}; the second step uses~\eqref{eq:linking_unitary_pentagon_C} for \(\Linkunit{A}{B}\); the third step uses that \(\mathbb{V}^{A\to B}_{34}\) and~\(\mathbb{V}^{C\to A}_{12}\) commute; the fourth step again uses~\eqref{eq:linking_unitary_pentagon_A}; and the last step uses again that \(\mathbb{V}^{A\to B}_{34}\) and~\(\mathbb{V}^{C\to A}_{12}\) commute.  Now Theorem~\ref{the:co-invariant_type_operators} shows that \(V\in\U\Mult(\hat{C}\otimes 1\otimes B)\).  This is the unique solution for~\eqref{eq:link_unit_comp_exist}.

  The following computation yields~\eqref{eq:linking_unitary_pentagon_C} for~\(\tilde{V}\):
  \begin{align*}
    & \Flip_{12}(\Multunit^C_{12})^* \mathbb{V}^{C\to B}_{24} \Multunit^C_{12}\Flip_{12}\\
    & = \Flip_{12}(\Multunit^C_{12})^*(\mathbb{V}^{C\to A}_{23})^*
    \mathbb{V}^{A\to B}_{34}\mathbb{V}^{C\to A}_{23}
    (\mathbb{V}^{A\to B}_{34})^*\Multunit^C_{12}\Flip_{12}\\
    & = \Flip_{12}(\mathbb{V}^{C\to A}_{23})^*
    (\mathbb{V}^{C\to A}_{13})^*(\Multunit^C_{12})^*
    \mathbb{V}^{A\to B}_{34}\mathbb{V}^{C\to A}_{23}
    (\mathbb{V}^{A\to B}_{34})^*\Multunit^C_{12}\Flip_{12}\\
    & = (\mathbb{V}^{C\to A}_{13})^*(\mathbb{V}^{C\to A}_{23})^*
    \mathbb{V}^{A\to B}_{34}\Flip_{12}(\Multunit^C_{12})^*
    \mathbb{V}^{C\to A}_{23}\Multunit^C_{12}\Flip_{12}
    (\mathbb{V}^{A\to B}_{34})^*\\
    & = (\mathbb{V}^{C\to A}_{13})^*(\mathbb{V}^{C\to A}_{23})^*
    \mathbb{V}^{A\to B}_{34}\mathbb{V}^{C\to A}_{23}
    \mathbb{V}^{C\to A}_{13}(\mathbb{V}^{A\to B}_{34})^*\\
    & = (\mathbb{V}^{C\to A}_{13})^*\mathbb{V}^{C\to B}_{24}
    \mathbb{V}^{A\to B}_{34}\mathbb{V}^{C\to A}_{13}
    (\mathbb{V}^{A\to B}_{34})^*\\
    & = \mathbb{V}^{C\to B}_{24}(\mathbb{V}^{C\to A}_{13})^*
    \mathbb{V}^{A\to B}_{34}\mathbb{V}^{C\to A}_{13}
    (\mathbb{V}^{A\to B}_{34})^*
    = \mathbb{V}^{C\to B}_{24}\mathbb{V}^{C\to B}_{14}.
  \end{align*}
  The first step uses~\eqref{eq:linking_unitary_pentagon_C}; the second step uses properties of~\(\Flip\) and that \(\Multunit^C_{12}\) and~\(\mathbb{V}^{A\to B}_{34}\) commute; the third step uses~\eqref{eq:linking_unitary_pentagon_C}; the fourth step uses~\eqref{eq:link_unit_comp_exist}; the fifth step uses that \(\mathbb{V}^{C\to A}_{13}\) and~\(\mathbb{V}^{C\to A}_{24}\) commute; and the last step uses~\eqref{eq:link_unit_comp_exist}.

  Similarly, one shows~\eqref{eq:linking_unitary_pentagon_A}.  Hence~\(\Linkunit{C}{B}\) is indeed a bicharacter.
\end{proof}

\begin{remark}
  \label{rem:composition_welldefined}
  The composition of bicharacters at first sight depends on the choice of the generating modular multiplicative unitaries.  Both Theorem~\ref{the:linking_universal} and Proposition~\ref{pro:compose_right} below provide alternative descriptions of this composition that show its independence of auxiliary choices.
\end{remark}

\begin{proposition}
  \label{pro:associative}
  The composition of bicharacters is associative, and the multiplicative unitary~\(\multunit^C\) is an identity on~\(C\).  Thus bicharacters with the above composition and locally compact quantum groups are the arrows and objects of a category, called bicharacter category.
\end{proposition}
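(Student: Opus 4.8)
The plan is to verify the two category axioms for the composition~\(*\) of Definition~\ref{def:composition_of_linking_unitary}: that \(\multunit^C\) is a two-sided identity, and that \(*\) is associative. Throughout I fix modular multiplicative unitaries generating the quantum groups involved and work with the concrete images~\(\mathbb{V}\) on the relevant Hilbert space tensor products, relying on Lemma~\ref{lemm:equiv_criterion_of_linkunit} and on the existence and uniqueness supplied by Lemma~\ref{lem:composition_exists}.

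For the identity I would first recall that \(\multunit^C\) is a bicharacter from~\(C\) to~\(C\) by Example~\ref{ex:strong_lu}, with concrete image \(\Multunit^C\) on \(\Hils_C\otimes\Hils_C\). The right unit law \(\Linkunit{C}{A}*\multunit^C=\Linkunit{C}{A}\) follows by substituting~\(\multunit^C\) (intermediate object~\(C\)) into~\eqref{eq:link_unit_comp_exist}: the assertion reduces to
\[
\mathbb{V}^{C\to A}_{23}\,\Multunit^C_{12}
=\Multunit^C_{12}\,\mathbb{V}^{C\to A}_{13}\,\mathbb{V}^{C\to A}_{23},
\]
which is exactly~\eqref{eq:linking_unitary_pentagon_C} for \(\mathbb{V}^{C\to A}\). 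Symmetrically, the left unit law \(\multunit^C*\Linkunit{B}{C}=\Linkunit{B}{C}\) reduces to
\[
\Multunit^C_{23}\,\mathbb{V}^{B\to C}_{12}
=\mathbb{V}^{B\to C}_{12}\,\mathbb{V}^{B\to C}_{13}\,\Multunit^C_{23},
\]
which is exactly~\eqref{eq:linking_unitary_pentagon_A}. Thus both unit laws are direct restatements of the two pentagon equations that characterise bicharacters.

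For associativity I would place everything on \(\Hils_C\otimes\Hils_A\otimes\Hils_B\otimes\Hils_D\) for bicharacters \(\Linkunit{C}{A}\), \(\Linkunit{A}{B}\), \(\Linkunit{B}{D}\), abbreviating \(X\defeq\mathbb{V}^{C\to A}_{12}\), \(Y\defeq\mathbb{V}^{A\to B}_{23}\), \(Z\defeq\mathbb{V}^{B\to D}_{34}\). The convenient form of the composition law is the \emph{first} equation of Definition~\ref{def:composition_of_linking_unitary}, read as an exchange relation. Writing \(U\defeq\mathbb{V}^{C\to B}_{13}\) for \(\Linkunit{A}{B}*\Linkunit{C}{A}\) and \(W\defeq\mathbb{V}^{A\to D}_{24}\) for \(\Linkunit{B}{D}*\Linkunit{A}{B}\), these become \(YX=XUY\) and \(ZY=YWZ\); writing \(P\) and~\(P'\) for the images of the two iterated composites \(\Linkunit{B}{D}*(\Linkunit{A}{B}*\Linkunit{C}{A})\) and \((\Linkunit{B}{D}*\Linkunit{A}{B})*\Linkunit{C}{A}\) on legs~\(14\), they become \(ZU=UPZ\) and \(WX=XP'W\). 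Each relation ampliates to the four-leg space because the operators involved act trivially on the remaining leg. I would then rewrite the single product \(ZYX\) in two ways, using only these four exchange relations together with \([X,Z]=0\) (disjoint legs \(12\) and \(34\)) and \([Y,P]=[Y,P']=0\) (disjoint legs \(23\) and \(14\)):
\[
ZYX=ZXUY=XZUY=XUPZY=XUPYWZ=XUYPWZ,
\]
\[
ZYX=YWZX=YWXZ=YXP'WZ=XUYP'WZ.
\]
Comparing the two right-hand sides and cancelling the unitaries \(XUY\) on the left and \(WZ\) on the right gives \(P=P'\), so the two bracketings agree. Together with the unit laws this shows that locally compact quantum groups and bicharacters form a category.

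The one real difficulty is bookkeeping: one must assign the four objects \(C,A,B,D\) to the four tensor legs consistently and translate each instance of Definition~\ref{def:composition_of_linking_unitary} into the correctly leg-labelled exchange relation; once this is done the computation is a short chain of substitutions. The conceptual point that must not be skipped is that each intermediate object \(U\), \(W\), and then \(P\), \(P'\) is again a bicharacter, so that the subsequent composition—and hence its exchange relation—is actually defined; this is exactly what Lemma~\ref{lem:composition_exists} guarantees.
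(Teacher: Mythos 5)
Your proof is correct, but it takes a genuinely different route from the paper. The paper's own proof is essentially a deferral: it notes that only associativity is non-trivial, explicitly \emph{omits} the direct computation, and instead derives associativity from Theorem~\ref{the:linking_universal} (which identifies composition of bicharacters with composition of Hopf \(^*\)\nb-homomorphisms \(C^\univ\to A^\univ\)) or from Proposition~\ref{pro:compose_right} (which identifies it with composition of functors between coaction categories) --- two translations in which associativity is automatic. You instead carry out the direct algebraic verification the paper skips: the unit laws are correctly recognised as restatements of the pentagon equations \eqref{eq:linking_unitary_pentagon_C} and \eqref{eq:linking_unitary_pentagon_A} combined with the uniqueness in Lemma~\ref{lem:composition_exists}, and your two expansions of \(ZYX\) using the exchange relations \(YX=XUY\), \(ZY=YWZ\), \(ZU=UPZ\), \(WX=XP'W\) and the disjoint-leg commutations check out, yielding \(XUYPWZ=XUYP'WZ\) and hence \(P=P'\) after cancelling unitaries. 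You also correctly flag the one point that must not be elided, namely that \(U\) and \(W\) are again bicharacters so that the compositions defining \(P\) and \(P'\) exist. What your approach buys is logical self-containedness: the paper's argument is, strictly speaking, a forward reference to results proved only in later sections, whereas yours uses nothing beyond Section~3. What the paper's approach buys is brevity and a conceptual explanation of \emph{why} associativity holds, rather than a computation that verifies it.
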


\begin{proof}
  Only associativity of the composition is non-trivial.  This may be establised by a direct composition similar to the ones above.  We omit it because associativity follows immediately from Theorem~\ref{the:linking_universal} or from Proposition~\ref{pro:compose_right} below, which translate the composition into a diffrent language where associativity is obvious.
\end{proof}

Recall that the dual of a multiplicative unitary~\(\Multunit\) is the multiplicative unitary \(\hat{\Multunit}\defeq\Flip\Multunit^*\Flip\).  Correspondingly, the reduced bicharacter of the dual quantum group is \(\hat{\multunit}\defeq\flip(\multunit^*)\).  Here \(\flip\colon \hat{C}\otimes C\to C\otimes\hat{C}\) is the tensor flip automorphism and \(\Flip\colon \Hils_C\otimes\Hils_C\to \Hils_C\otimes\Hils_C\) is the tensor flip unitary.  A similar duality works for all bicharacters:

\begin{proposition}
  \label{pro:dual_bicharacter}
  Let \(V\in\U\Mult(\hat{C}\otimes A)\) be a bicharacter from~\(C\) to~\(A\) and let \(\mathbb{V}\in\U(\Hils_C\otimes\Hils_A)\) be the corresponding concrete bicharacter.  Then
  \[
  \hat{V}\defeq\flip(V^*) \in \U\Mult(A\otimes\hat{C})\quad\text{and}\quad
  \hat{\mathbb{V}}\defeq\Flip\mathbb{V^*}\Flip\in\U(\Hils_A\otimes\Hils_C)
  \]
  are a bicharacter from~\(\hat{A}\) to~\(\hat{C}\) and the correponding concrete bicharacter.  Here we identify the double dual of \((A,\Comult_A)\) again with \((A,\Comult_A)\).  This duality is a contravariant functor on the bicharacter category.
\end{proposition}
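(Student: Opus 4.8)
The plan is to verify directly that $\hat V\defeq\flip(V^*)$ satisfies the two conditions of Definition~\ref{def:linking_unitary} for a bicharacter from $\hat A$ to $\hat C$, to identify its concrete form, and then to check the three properties (identities, involutivity, reversal of composition) that make $V\mapsto\hat V$ a contravariant functor. First I would note that $\hat V$ lives in the correct algebra: from $V\in\U\Mult(\hat C\otimes A)$ we get $V^*\in\U\Mult(\hat C\otimes A)$ and hence $\flip(V^*)\in\U\Mult(A\otimes\hat C)$. Under the identification $\widehat{\hat A}=A$ — legitimate because dualising multiplicative unitaries is involutive, $\widehat{\hat\Multunit}=\Flip(\Flip\Multunit^*\Flip)^*\Flip=\Multunit$ since $\Flip^2=1$ — the algebra $\U\Mult(A\otimes\hat C)=\U\Mult(\widehat{\hat A}\otimes\hat C)$ is exactly the home of a bicharacter from $\hat A$ to $\hat C$. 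The claim that its concrete form is $\Flip\mathbb V^*\Flip$ is then a representation-level statement: $\hat A$ and $\hat C$ act on the same Hilbert spaces $\Hils_A$ and $\Hils_C$ as $A$ and $C$, the involution is implemented by the Hilbert-space adjoint, and the flip automorphism $\flip$ is spatially implemented by conjugation with $\Flip$; applying these three facts to $\hat V=\flip(V^*)$ turns $\mathbb V$ into $\Flip\mathbb V^*\Flip$.

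Next I would establish the two bicharacter identities for $\hat V$. Since each comultiplication is a $\Star$homomorphism it commutes with the involution, while $*$ reverses products and the leg-permutation automorphisms preserve them; so the identities for $\hat V$ follow from those for $V$ by applying $*$ and then a suitable flip of the three tensor legs. Concretely, taking adjoints in~\eqref{eq:linking_unitary_comult_C_hat} gives $(\Comult_{\hat C}\otimes\Id_A)(V^*)=V_{13}^*V_{23}^*$, and a cyclic permutation of the legs bringing the $A$\nb-leg to the front turns this into $(\Id_A\otimes\Comult_{\hat C})\hat V=\hat V_{12}\hat V_{13}$, i.e.\ \eqref{eq:linking_unitary_comult_A} for $\hat V$. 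Symmetrically, the adjoint of~\eqref{eq:linking_unitary_comult_A} for $V$, after the corresponding cyclic permutation, becomes $(\Comult_A\otimes\Id_{\hat C})\hat V=\hat V_{23}\hat V_{13}$, that is~\eqref{eq:linking_unitary_comult_C_hat} for $\hat V$ — note $\Comult_A=\Comult_{\widehat{\hat A}}$ is precisely the comultiplication Definition~\ref{def:linking_unitary} places on the source-dual leg. Hence $\hat V$ is a bicharacter from $\hat A$ to $\hat C$.

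Finally I would treat functoriality. Involutivity is immediate: $\hat{\hat V}=\flip\bigl((\flip(V^*))^*\bigr)=\flip(\flip(V))=V$, using that $*$ and $\flip$ are commuting involutions and $\flip\circ\flip=\Id$, together with the double-dual identification. Identity arrows are preserved because $\widehat{\multunit^C}=\flip\bigl((\multunit^C)^*\bigr)=\hat\multunit$ is exactly the reduced bicharacter of $\hat C$, namely the identity bicharacter on $\hat C$. For contravariance I would feed the concrete composition identity of Definition~\ref{def:composition_of_linking_unitary} for $\Linkunit{C}{B}=\Linkunit{A}{B}*\Linkunit{C}{A}$ through the same adjoint-and-flip machine: taking adjoints and conjugating by the total leg reversal $\Hils_C\otimes\Hils_A\otimes\Hils_B\to\Hils_B\otimes\Hils_A\otimes\Hils_C$ converts it, via $\Flip\mathbb V^*\Flip=\hat{\mathbb V}$ applied to each factor, into precisely the composition identity for $\widehat{\Linkunit{C}{A}}*\widehat{\Linkunit{A}{B}}$, a bicharacter from $\hat B$ to $\hat C$. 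By uniqueness of compositions in Lemma~\ref{lem:composition_exists}, this forces $\widehat{\Linkunit{C}{B}}=\widehat{\Linkunit{C}{A}}*\widehat{\Linkunit{A}{B}}$, the required reversal.

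The one point demanding care — and the only genuine obstacle — is the bookkeeping of the leg permutations. Because the algebras are noncommutative, taking adjoints reverses operator products, so at each step I must select the permutation that simultaneously restores the correct leg positions \emph{and} the correct order of factors: a cyclic rotation of the three legs for the two bicharacter identities, and the full reversal for the composition identity. Beyond keeping this combinatorics consistent, there is no conceptual difficulty.
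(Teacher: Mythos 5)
Your proposal is correct and follows essentially the same route as the paper: the two bicharacter identities for $\hat V$ are obtained from those of $V$ by taking adjoints and permuting legs (the paper's displayed computation is exactly your cross-pairing of \eqref{eq:linking_unitary_comult_C_hat} with \eqref{eq:linking_unitary_comult_A}), the concrete form $\Flip\mathbb{V}^*\Flip$ comes from the spatial implementation of $\flip$, and contravariance is the composition identity conjugated by the full leg reversal $\Flip_{13}$ together with uniqueness of compositions. Your additional checks of involutivity and preservation of identity arrows are correct and slightly more complete than what the paper writes out.
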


\begin{proof}
  We check~\eqref{eq:linking_unitary_comult_C_hat} for~\(\hat{V}\) using~\eqref{eq:linking_unitary_comult_A} for~\(V\):
  \[
  (\Comult_A\otimes\Id_{\hat{C}})\flip(V^*)
  = \flip_{23}\flip_{12}((\Id_{\hat{C}}\otimes\Comult_A)V^*)
  = \flip_{23}\flip_{12}(V^*_{13}V^*_{12})
  = \flip(V^*)_{23}\cdot \flip(V^*)_{13},
  \]
  A similar computation yields~\eqref{eq:linking_unitary_comult_A} for~\(\hat{V}\).  A quantum group and its dual are canonically represented on the same Hilbert space, and the flip~\(\flip\) on operators is implemented by conjugating by~\(\Flip\).  Hence \(\hat{\mathbb{V}}\defeq\Flip\mathbb{V^*}\Flip\).

  Functoriality follows from the following computation:
  \[
  \widehat{\Linkunit{C}{B}_{13}}
  = \Flip_{13}\Linkunit{A}{B}_{23}(\Linkunit{C}{A}_{12})^*(\Linkunit{A}{B}_{23})^*\Linkunit{C}{A}_{12}\Flip_{13}
  = \widehat{\Linkunit{A}{B}_{12}}^*\widehat{\Linkunit{C}{A}_{23}}
  \widehat{\Linkunit{A}{B}_{12}}\widehat{\Linkunit{C}{A}_{23}}^*.
  \]
\end{proof}

The following result generalises \cite{Soltan-Woronowicz:Multiplicative_unitaries}*{Lemma 40} and is proved by the same idea.

\begin{proposition}
  \label{pro:opplinkunit_same_as_link_unit_in_mult_algebra}
  Let \(V\in\U\Mult(\hat{C}\otimes A)\) be a bicharacter.  Let \(R\) and~\(\tau\) denote the unitary antipodes and scaling groups of quantum groups.  Then
  \begin{align}
    \label{eq:unitary_antipode_linkunit_same_as_link_unit}
    (R_{\hat{C}}\otimes R_A)(V) &=V,\\
    \label{eq:scaling_group_linkunit}
    (\tau^{\hat{C}}_t\otimes\tau^A_t)(V) &=V \qquad
    \text{for all \(t\in\R\).}
  \end{align}
\end{proposition}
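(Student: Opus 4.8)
The plan is to pass to the concrete bicharacter \(\mathbb{V}\in\U(\Hils_C\otimes\Hils_A)\) provided by Lemma~\ref{lemm:equiv_criterion_of_linkunit} and to reproduce, with~\(\mathbb{V}\) in the role of the multiplicative unitary, the argument that proves \cite{Soltan-Woronowicz:Multiplicative_unitaries}*{Lemma~40}, of which the present statement is the generalisation from the special case \(A=C\), \(V=\multunit^C\) to an arbitrary bicharacter. The ingredients are the two pentagon equations~\eqref{eq:linking_unitary_pentagon_C} and~\eqref{eq:linking_unitary_pentagon_A}, which replace the single pentagon used for~\(\multunit^C\), together with the modular data \(\hat{Q}_C,Q_C,\widetilde{\Multunit}^C\) and \(\hat{Q}_A,Q_A,\widetilde{\Multunit}^A\) of \cite{Soltan-Woronowicz:Remark_manageable}*{Definition~2.1} attached to \(\Multunit^C\) and~\(\Multunit^A\). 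I recall that the scaling groups are implemented spatially, as \(\tau^{\hat{C}}_t=\textup{Ad}(\hat{Q}_C^{2\ima t})\) on~\(\hat{C}\) and \(\tau^A_t=\textup{Ad}(Q_A^{2\ima t})\) on~\(A\), and that the unitary antipodes \(R_{\hat{C}}\) and~\(R_A\) are described through the conjugate-space unitaries \(\widetilde{\Multunit}^C\), \(\widetilde{\Multunit}^A\) as in \cite{Soltan-Woronowicz:Multiplicative_unitaries}.

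Under these implementations, \eqref{eq:scaling_group_linkunit} is equivalent to the assertion that~\(\mathbb{V}\) commutes with \(\hat{Q}_C^{2\ima t}\otimes Q_A^{2\ima t}\). The point I would stress is that, unlike for \(V=\multunit^C\), this does not come from conjugating a pentagon equation by a single one-parameter group: in~\eqref{eq:linking_unitary_pentagon_A} the factor~\(\Multunit^A_{23}\) on \(\Hils_A\otimes\Hils_A\) is preserved only by \(\hat{Q}_A^{2\ima t}\otimes Q_A^{2\ima t}\), whereas the \(A\)-leg of~\(\mathbb{V}\) asks to be conjugated by~\(Q_A^{2\ima t}\), so that the two copies \(\mathbb{V}_{12}\) and~\(\mathbb{V}_{13}\) would transform by incompatible operators. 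I would instead imitate the transport step in the proof of Theorem~\ref{the:co-invariant_type_operators}, rewriting the matrix coefficients of~\eqref{eq:linking_unitary_pentagon_A} on \(\conj{\Hils_A}\otimes\Hils_A\) by means of the modularity formula relating \(\Multunit^A\) and~\(\widetilde{\Multunit}^A\); there the powers of \(Q_A\) and~\(\hat{Q}_A\) act diagonally, may be moved past~\(\widetilde{\Multunit}^A\), and the reverse transport reads off the commutation. The unbounded operators force a domain argument, to be handled by the Gaussian regularisations \(R_n\) and~\(\widehat{R}_n\) of Theorem~\ref{the:co-invariant_type_operators}.

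For~\eqref{eq:unitary_antipode_linkunit_same_as_link_unit} I would follow \cite{Soltan-Woronowicz:Multiplicative_unitaries}*{Lemma~40} more literally. Expressing \(R_{\hat{C}}\) and~\(R_A\) through the canonical transpositions \(x\mapsto\conj{x}^{*}\) of \(\Bound(\Hils_C)\) and~\(\Bound(\Hils_A)\) twisted by \(\hat{Q}_C\) and~\(Q_A\), one writes \((R_{\hat{C}}\otimes R_A)(\mathbb{V})\) in terms of the transported operator on \(\conj{\Hils_C}\otimes\conj{\Hils_A}\); feeding~\eqref{eq:linking_unitary_pentagon_C} and~\eqref{eq:linking_unitary_pentagon_A} through the modularity formulae for \(\widetilde{\Multunit}^C\) and~\(\widetilde{\Multunit}^A\) then turns this transported operator back into~\(\mathbb{V}\), yielding \((R_{\hat{C}}\otimes R_A)(\mathbb{V})=\mathbb{V}\). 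Once~\eqref{eq:scaling_group_linkunit} is in hand, a cleaner route is to verify invariance of~\(V\) under the full antipodes instead, using the factorisation of the antipode into~\(R_A\) and an analytic continuation of the scaling group, the analytic continuation being supplied by the same modularity formula.

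The main obstacle is bookkeeping, but of a genuinely delicate kind: the two quantum groups carry independent modular data \((\hat{Q}_C,Q_C,\widetilde{\Multunit}^C)\) and \((\hat{Q}_A,Q_A,\widetilde{\Multunit}^A)\), and on each of \(\Hils_C\) and~\(\Hils_A\) both operators \(Q\) and~\(\hat{Q}\) occur, serving as the scaling generator of the algebra on one leg and of its dual on the other. Every manipulation must therefore be routed through the modularity relation between \(\Multunit\) and~\(\widetilde{\Multunit}\) rather than through naive conjugation, with careful attention to which conjugate Hilbert space each leg inhabits; the attendant domain questions for the unbounded \(Q\)'s are then controlled exactly as in the proof of Theorem~\ref{the:co-invariant_type_operators}.
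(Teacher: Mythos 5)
Your plan and the paper's proof diverge fundamentally: the paper never touches the modular operators \(\hat{Q},Q,\widetilde{\Multunit}\) directly. Instead it takes entire analytic functionals \(\varphi\in\hat{C}_*\), \(\psi\in A_*\) for the scaling groups, uses that~\(V\) is adapted to~\(\Multunit^A\) (via~\eqref{eq:linking_unitary_pentagon_A}) so that \cite{Woronowicz:Multiplicative_Unitaries_to_Quantum_grp}*{Theorem 1.6(4)} gives \(\bar{\kappa}_A(\omega\otimes\Id)(V)=(\omega\otimes\Id)(V^*)\), does the same on the other leg with \(\Flip V^*\Flip\), combines the two via the polar decomposition \(\kappa=R\circ\tau_{\nicefrac{\ima}{2}}\) to obtain \((\varphi_{z+\nicefrac{\ima}{2}}\otimes\psi_{z+\nicefrac{\ima}{2}})\circ(R_{\hat{C}}\otimes R_A)(V)=(\varphi_z\otimes\psi_z)(V)\), deduces that \(z\mapsto(\varphi_z\otimes\psi_z)(V)\) is bounded and \(\ima\)\nb-periodic, and concludes by Liouville's theorem that it is constant --- which yields \eqref{eq:unitary_antipode_linkunit_same_as_link_unit} and \eqref{eq:scaling_group_linkunit} simultaneously.

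The gap in your sketch is concrete. Transporting the single pentagon equation~\eqref{eq:linking_unitary_pentagon_A} through the modularity formula relating \(\Multunit^A\) and~\(\widetilde{\Multunit}^A\) is exactly the mechanism behind Woronowicz's Theorem~1.6(4), and what it yields is the action of the \emph{full antipode} \(\kappa_A=R_A\circ\tau^A_{\nicefrac{\ima}{2}}\) on slices of~\(V\) --- never the commutation of~\(\mathbb{V}\) with \(\hat{Q}_C^{2\ima t}\otimes Q_A^{2\ima t}\) on its own. Indeed it cannot: a unitary satisfying only~\eqref{eq:linking_unitary_pentagon_A} is merely a corepresentation of~\(A\), and such a unitary need not be invariant under \(\Id\otimes\tau^A_t\), so no manipulation of that one pentagon can ``read off the commutation'' as your second paragraph asserts. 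The two-legged invariances require combining the information from \emph{both} pentagons, and the only way to disentangle \(R\) from \(\tau_{\nicefrac{\ima}{2}}\) in the resulting relation is the analytic-continuation/periodicity step (Liouville), which is absent from your plan. Worse, the two halves of your plan lean on each other circularly: your ``cleaner route'' to~\eqref{eq:unitary_antipode_linkunit_same_as_link_unit} presupposes~\eqref{eq:scaling_group_linkunit} analytically continued to \(t=\nicefrac{\ima}{2}\), while your argument for~\eqref{eq:scaling_group_linkunit} is the part that does not stand on its own. The bookkeeping you flag as the main obstacle is not the real difficulty; the missing idea is the functional \(z\mapsto(\varphi_z\otimes\psi_z)(V)\) and its periodicity.
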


\begin{proof}
  Let \(\varphi\in\hat{C}_*\) and \(\psi\in A_*\) be entire analytic for \((\tau^{\hat{C}}_t)\) and~\((\tau^A_t)\), respectively.  Let \(\varphi_t \defeq \varphi\circ\tau^{\hat{C}}_t\) and \(\psi_t \defeq \psi\circ\tau^A_t\) for all \(t\in\R\).  Analytic continuation yields
  \[
  \varphi_{z+z'}=\varphi_z\circ\tau^{\hat{C}}_{z'},
  \qquad \text{and}\qquad
  \varphi_{z+z'}=\psi_z\circ\tau^A_{z'}
  \qquad\text{for all \(z,z'\in\C\).}
  \]
  Polar decomposition of the antipodes \(\kappa_{\hat{C}}\) and~\(\kappa_A\) (\cite{Woronowicz:Multiplicative_Unitaries_to_Quantum_grp}*{Theorem 1.5}) shows that
  \[
  \varphi_z\circ\kappa_{\hat{C}}=\varphi_{z+\nicefrac{\ima}{2}}\circ R_{\hat{C}},
  \qquad \text{and} \qquad
  \psi_z\circ\kappa_A=\psi_{z+\nicefrac{\ima}{2}}\circ R_A.
  \]
  Let~\(\bar{\kappa}_A\) be the closure of~\(\kappa_A\) with respect to the strict topology on \(\Mult(A)\).  Then \cite{Woronowicz:Multiplicative_Unitaries_to_Quantum_grp}*{Theorem 1.6(4)} yields
  \[
  \bar{\kappa}_A(\omega\otimes\Id)V=(\omega\otimes\Id)(V^*)
  \]
  for all \(\omega\in \hat{C}_*\).  Applying~\(\psi_z\) to both sides and using that~\(\omega\) is arbitrary, we get
  \[
  \bigl(\Id\otimes\psi_{z+\nicefrac{\ima}{2}}\circ R_A\bigr)V
  = (\Id\otimes\psi_z)(V^*).
  \]
  Interchanging the roles of \(A\) and~\(\hat{C}\) and replacing \(V\) by \(\Flip V^*\Flip\) and \(\psi\) by~\(\varphi\), we get
  \[
  \bigl(\varphi_{z+\nicefrac{\ima}{2}}\circ R_{\hat{C}}\otimes\Id\bigr)(V^*)
  = (\varphi_z\otimes\Id)V.
  \]
  Both formulas together yield
  \begin{multline}
    \label{eq:V_with_entire_analytic_functional}
    (\varphi_{z+\nicefrac{\ima}{2}}\otimes\psi_{z+\nicefrac{\ima}{2}})\circ
    (R_{\hat{C}}\otimes R_A)(V)
    = (\varphi_{z+\nicefrac{\ima}{2}}\circ R_{\hat{C}}\otimes\psi_{z+\nicefrac{\ima}{2}}\circ R_A)(V)\\
    = (\varphi_{z+\nicefrac{\ima}{2}}\circ R_{\hat{C}}\otimes\psi_z)(V^*)
    = \psi_z(\varphi_{z+\nicefrac{\ima}{2}}\circ R_{\hat{C}}\otimes\Id)(V^*)
    = (\varphi_z\otimes\psi_z)(V).
  \end{multline}
  Inserting \(\varphi\circ\kappa_{\hat{C}}\) and~\(\psi\circ\kappa_A\) into~\eqref{eq:V_with_entire_analytic_functional} instead of \(\varphi\) and~\(\psi\) yields
  \[
  (\varphi_{z+\ima}\otimes\psi_{z+\ima})(V)
  = (\varphi_{z+\nicefrac{\ima}{2}}\otimes\psi_{z+\nicefrac{\ima}{2}})
  \circ (R_{\hat{C}}\otimes R_A)(V)
  = (\varphi_z\otimes\psi_z)(V).
  \]
  This shows that \((\varphi_z\otimes\psi_z)(V)\) is a periodic function of period~\(\ima\).  Being bounded as well, Liouville's Theorem shows that it is constant, that is,
  \begin{align}
    \label{eq:V_with_const_entire_analytic_functional}
    (\varphi_z\otimes\psi_z)(V) &= (\varphi\otimes\psi)(V)
  \end{align}
  for all \(z\in\C\).  Putting \(z=-\nicefrac{\ima}{2}\) in~\eqref{eq:V_with_entire_analytic_functional} and using~\eqref{eq:V_with_const_entire_analytic_functional} yields
  \[
  (\varphi\otimes\psi)\circ (R_{\hat{C}}\otimes R_A)(V)=(\varphi\otimes\psi)(V).
  \]
  This proves \((R_{\hat{C}}\otimes R_A)(V)=V\).  Finally, \eqref{eq:V_with_const_entire_analytic_functional} also yields \((\tau^{\hat{C}}_t\otimes\tau^A_t)(V)=V\) for all \(t\in\R\).
\end{proof}

Besides taking duals, we may also take the opposite or coopposite of a quantum group, where we change the order of multiplication or comultiplication.  We remark without proof that these constructions are (covariant) functors on the bicharacter category.  The opposite-coopposite of a quantum group is isomorphic to the original quantum group via the unitary antipode because it is an antihomomorphism both for the algebra and the coalgebra structure.  The first part of Proposition~\ref{pro:opplinkunit_same_as_link_unit_in_mult_algebra} shows that this isomorphism acts identically on bicharacters.

%
%
%

\begin{example}
  \label{ex:opposite_to_coopposite_from_definition_of_modularity}
  We give an interesting example of a concrete bicharacter from the definition of modular multiplicative unitaries.

  Let \((C,\Comult)\) be a quantum group generated by a modular multiplicative unitary~\(\Multunit\) on \(\Hils\otimes\Hils\).  The opposite quantum group \((C^\opp,\Comult)\) is generated by a modular multiplicative unitary acting on \(\conj{\Hils}\otimes\conj{\Hils}\) for the complex-conjugate Hilbert space~\(\conj{\Hils}\).  An operator~\(w\) on~\(\Hils\) induces a transpose operator~\(w^\transpose\) on~\(\conj{\Hils}\) by \(w^{\transpose}(\conj{\xi}) \defeq \conj{w^*\xi}\) for all \(\xi\in\Hils\).  The unitary operator~\((\Multunit^*)^{\transpose\otimes\transpose}\) on~\(\conj{\Hils}\otimes\conj{\Hils}\) is multiplicative and gives rise to the quantum group \((\bar{C},\bar{\Comult})\) where
  \begin{equation}
    \label{eq:opposite_quantum_group}
    \bar{C}=\{c^\transpose : c\in C\}
    \quad\text{and}\quad
    \bar{\Comult}(c^\transpose)=(\Comult(c))^{\transpose\otimes\transpose}.
  \end{equation}
  The quantum group \((\bar{C},\bar{\Comult})\) is isomorphic to \((C^\opp,\Comult)\).  Thus the dual \((\hat{\bar{C}},\hat{\bar{\Comult}})\) is isomorphic to the dual of the opposite quantum group \((\widehat{C^\opp},\hat{\Comult})\), where
  \begin{equation}
    \label{eq:dual_of_opposite_quantum_group}
    \hat{\bar{C}}=\{\hat{c}^\transpose : \hat{c}\in \hat{C}\}
    \quad\text{and}\quad
    \hat{\bar{\Comult}}(\hat{c}^\transpose)=(\hat{\Comult}(\hat{c}))^{\transpose\otimes\transpose}.
  \end{equation}

  Recall the operator \(\widetilde{\Multunit}\) from the definition of a modular multiplicative unitary, \cite{Soltan-Woronowicz:Remark_manageable}*{Definition 2.1}.  It satisfies \(\widetilde{\Multunit}^{*}=\Multunit^{\transpose\otimes R}\) by Theorem~\cite{Soltan-Woronowicz:Remark_manageable}*{Theorem 2.3(6(ii))}.  Hence~\eqref{eq:dual_of_opposite_quantum_group} yields \(\widetilde{\multunit}\in\U\Mult(\hat{\bar{C}}\otimes C)\).  We compute
  \begin{align*}
    (\hat{\bar{\Comult}}\otimes\Id_{C})\widetilde{\multunit}^{*}
    &=((\hat{\Comult}\otimes\Id_{C})\multunit)^{\transpose\otimes\transpose\otimes R}
    =(\multunit_{23}\multunit_{13})^{\transpose\otimes\transpose\otimes R}
    =\widetilde{\multunit}_{13}^{*}\widetilde{\multunit}_{23}^{*}.
    \\
    (\Id_{\hat{\bar{C}}}\otimes\flip\circ\Comult)\widetilde{\multunit}^{*}
    &=((\Id_{\hat{C}}\otimes \flip\circ\Comult)\multunit)^{\transpose\otimes R\otimes R}
    =(\multunit_{12}\multunit_{13})^{\transpose\otimes R\otimes R}
    =\widetilde{\multunit}_{13}^{*}\widetilde{\multunit}_{12}^{*}.
  \end{align*}
  Now Lemma~\ref{lemm:equiv_criterion_of_linkunit} shows that \(\widetilde{\multunit}\) is a bicharacter from \((\bar{C},\bar{\Comult})\) to \((C,\Comult^\opp)\).
\end{example}

\section{Passage to universal quantum groups}
\label{sec:universal}

In this section we show that our quantum group homomorphisms are equivalent to Hopf \(^*\)\nb-homomorphisms between the associated universal quantum groups, which were previously suggested as a suitable notion of quantum group homomorphism.  Moreover, on the way, we shall show that every \emph{reduced} bicharacter admits a unique bi-lift to a universal bicharacter.  Thus modular (or manageable) multiplicative unitaries are \emph{basic} in the sense of \cite{Ng:Morph_of_Mult_unit}*{Definition 2.3}.

Let \((C,\Comult_C)\) be a quantum group in the sense of~\cite{Soltan-Woronowicz:Multiplicative_unitaries}.  The associated universal quantum group \((C^\univ,\Comult_{C^\univ})\), also introduced in~\cite{Soltan-Woronowicz:Multiplicative_unitaries}, is a C*\nb-bialgebra, that is, a C*\nb-algebra equipped with a coassociative comultiplication.  While its structure is similar to that of a locally compact quantum group, it is usually not generated by a modular multiplicative unitary.  Thus the theory above does not apply to it.

\begin{definition}
  \label{def:left_corepresentation}
  A left \emph{corepresentation} of \((\hat{C},\Comult_{\hat{C}})\) on a C*\nb-algebra~\(D\) is a unitary multiplier \(V\in\U\Mult(\hat{C}\otimes D)\) that satisfies \((\Comult_{\hat{C}}\otimes \Id_D)(V) = V_{23} V_{13}\).
\end{definition}

The universal dual carries a left corepresentation \(\maxcorep\in\U\Mult(\hat{C}\otimes C^\univ)\) of~\(\hat{C}\) that is universal in the following sense: for any left corepresentation \(U\in\U\Mult(\hat{C}\otimes D)\) there is a unique morphism \(\varphi\colon C^\univ\to D\) with \(U = (\Id_{\hat{C}} \otimes \varphi)(\maxcorep)\).  This universal property characterises the pair \((C^\univ,\maxcorep)\) uniquely up to isomorphism.

The comultiplication on~\(C^\univ\) is defined so that \(\Id_{\hat{C}}\otimes \Comult_{C^\univ}\) maps~\(\maxcorep\) to the left corepresentation~\(\maxcorep_{12}\maxcorep_{13}\).  Thus~\(\maxcorep\) is a bicharacter and may be interpreted as a quantum group homomorphism from~\(C\) to~\(C^\univ\).  This is, however, not literally true because~\(C^\univ\) is not a quantum group in our sense.

\begin{proposition}
  \label{pro:qg_universal}
  Let \((A,\Comult_A)\) be a C*\nb-bialgebra.  Bicharacters in \(\U\Mult(\hat{C}\otimes A)\) correspond bijectively to Hopf \(^*\)\nb-homomorphisms from~\((C^\univ,\Comult_{C^\univ})\) to~\((A,\Comult_A)\).
\end{proposition}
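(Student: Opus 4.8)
The plan is to read the defining conditions of a bicharacter through the universal property of $(C^\univ,\maxcorep)$ recalled above. That universal property already provides a bijection between left corepresentations of $\hat{C}$ on $A$ and morphisms $C^\univ\to A$, so the only real work is to match the \emph{second} bicharacter condition \eqref{eq:linking_unitary_comult_A} with the condition that the associated morphism intertwine the comultiplications.

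First I would note that the defining equation \eqref{eq:linking_unitary_comult_C_hat} of a bicharacter is exactly the requirement that $V$ be a left corepresentation of $(\hat{C},\Comult_{\hat{C}})$ on~$A$ in the sense of Definition~\ref{def:left_corepresentation}. Hence by the universal property there is a unique morphism $f\colon C^\univ\to A$ with $V=(\Id_{\hat{C}}\otimes f)(\maxcorep)$, and conversely applying $\Id_{\hat{C}}\otimes f$ to the corepresentation identity for~$\maxcorep$ shows that every morphism~$f$ yields a unitary satisfying \eqref{eq:linking_unitary_comult_C_hat}. This sets up a bijection between unitaries satisfying \eqref{eq:linking_unitary_comult_C_hat} and morphisms $C^\univ\to A$.

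Next I would transport the remaining condition \eqref{eq:linking_unitary_comult_A} across this bijection. Writing $V=(\Id_{\hat{C}}\otimes f)(\maxcorep)$ and using that $\Id_{\hat{C}}\otimes f$ is compatible with leg numbering, the left-hand side of \eqref{eq:linking_unitary_comult_A} becomes $(\Id_{\hat{C}}\otimes(\Comult_A\circ f))(\maxcorep)$, while $V_{12}V_{13}=(\Id_{\hat{C}}\otimes(f\otimes f))(\maxcorep_{12}\maxcorep_{13})$. Invoking the defining property $(\Id_{\hat{C}}\otimes\Comult_{C^\univ})(\maxcorep)=\maxcorep_{12}\maxcorep_{13}$ of the universal comultiplication, this equals $(\Id_{\hat{C}}\otimes((f\otimes f)\circ\Comult_{C^\univ}))(\maxcorep)$. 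Thus \eqref{eq:linking_unitary_comult_A} is equivalent to the coincidence of the two corepresentations $(\Id_{\hat{C}}\otimes(\Comult_A\circ f))(\maxcorep)$ and $(\Id_{\hat{C}}\otimes((f\otimes f)\circ\Comult_{C^\univ}))(\maxcorep)$ of~$\hat{C}$ on $A\otimes A$.

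Finally, both $\Comult_A\circ f$ and $(f\otimes f)\circ\Comult_{C^\univ}$ are morphisms $C^\univ\to A\otimes A$ presenting the same unitary in the form $(\Id_{\hat{C}}\otimes(-))(\maxcorep)$, so the uniqueness clause of the universal property, applied with $D=A\otimes A$, forces $\Comult_A\circ f=(f\otimes f)\circ\Comult_{C^\univ}$ exactly when \eqref{eq:linking_unitary_comult_A} holds. Equivalently, $V$ is a bicharacter if and only if~$f$ is a Hopf \(^*\)\nb-homomorphism, and restricting the bijection of the second paragraph to these matching subclasses gives the claim. The step needing the most care is this last one: one must check that the two composite maps are genuine non-degenerate morphisms so that the universal property is applicable, and verify that $\Id_{\hat{C}}\otimes f$ respects the leg-numbering conventions when passing from $V_{12}V_{13}$ to $\maxcorep_{12}\maxcorep_{13}$. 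The remaining manipulations are purely formal.
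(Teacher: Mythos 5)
Your proposal is correct and follows essentially the same route as the paper: identify condition \eqref{eq:linking_unitary_comult_C_hat} with the left-corepresentation property to get a unique morphism $f\colon C^\univ\to A$ via the universal property, then translate \eqref{eq:linking_unitary_comult_A} into the equality of the two corepresentations associated to $\Comult_A\circ f$ and $(f\otimes f)\circ\Comult_{C^\univ}$ and conclude by the uniqueness clause of the universal property. Your explicit mention of applying uniqueness with $D=A\otimes A$ is exactly the step the paper uses implicitly.
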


\begin{proof}
  A Hopf \(^*\)\nb-homomorphism \(\varphi\colon C^\univ \to A\) is also a morphism from~\(C^\univ\) to~\(A\) and thus corresponds to a left corepresentation \(V\in\U\Mult(\hat{C}\otimes A)\), which is determined by the condition \((\Id_{\hat{C}}\otimes\varphi)(\maxcorep) = V\).  The Hopf \(^*\)\nb-homomorphisms \(\Comult_A\circ \varphi\colon C^\univ \to A\otimes A\) and \((\varphi\otimes\varphi)\circ\Comult_{C^\univ}\colon C^\univ \to A\otimes A\) correspond to the left corepresentations \((\Id_{\hat{C}}\otimes \Comult_A)(V)\) and~\(V_{12}V_{13}\), that is, \(\Id_{\hat{C}}\otimes (\Comult_A\circ \varphi)(\maxcorep) = (\Id_{\hat{C}}\otimes \Comult_A)(V)\) and \((\Id_{\hat{C}} \otimes (\varphi\otimes\varphi)\circ\Comult_{C^\univ})(\maxcorep) = V_{12}V_{13}\) because~\(\maxcorep\) is a bicharacter.  Thus a morphism \(\varphi\colon C^\univ \to A\) is a Hopf \(^*\)\nb-homomorphism if and only if the corepresentation~\(V\) also satisfies \((\Id_{\hat{C}}\otimes \Comult_A)(V) = V_{12} V_{13}\).  That is, \(V\) is a bicharacter.
\end{proof}

\begin{corollary}
  \label{cor:dual_strong_morph}
  Any Hopf \(^*\)\nb-homomorphism \(\varphi\colon C^\univ \to A\) induces a dual Hopf \(^*\)\nb-homomorphism \(\hat{\varphi}\colon \hat{A}^\univ \to \hat{C}\).
\end{corollary}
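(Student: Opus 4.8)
The plan is to chain together the two correspondences already established, namely Proposition~\ref{pro:qg_universal} and the duality of bicharacters in Proposition~\ref{pro:dual_bicharacter}. Throughout I use that \(A\) is a quantum group in the sense of~\cite{Soltan-Woronowicz:Multiplicative_unitaries}, so that \(\hat{A}\) is again such a quantum group \textup(generated by the dual modular multiplicative unitary \(\hat{\Multunit}^A\)\textup) and in particular the universal dual \(\hat{A}^\univ\) is defined.

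First I would feed \(\varphi\) into Proposition~\ref{pro:qg_universal}: a Hopf \(^*\)\nb-homomorphism \(\varphi\colon C^\univ\to A\) corresponds to the bicharacter \(V\defeq(\Id_{\hat{C}}\otimes\varphi)(\maxcorep)\in\U\Mult(\hat{C}\otimes A)\) from \(C\) to~\(A\). Next I would apply Proposition~\ref{pro:dual_bicharacter} to obtain the dual bicharacter \(\hat{V}\defeq\flip(V^*)\in\U\Mult(A\otimes\hat{C})\), which is a bicharacter from \(\hat{A}\) to~\(\hat{C}\). The one point requiring care is the identification of the double dual \(\widehat{\hat{A}}\) with~\(A\) \textup(as in Proposition~\ref{pro:dual_bicharacter}\textup), which is exactly what lets us read \(\hat{V}\) as an element of \(\U\Mult(\widehat{\hat{A}}\otimes\hat{C})\).

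Finally I would apply Proposition~\ref{pro:qg_universal} a second time, now with \(\hat{A}\) in the role of the source quantum group and \(\hat{C}\) in the role of the target \(\Cst\)\nb-bialgebra. Since \(\widehat{\hat{A}}=A\), the bicharacter \(\hat{V}\in\U\Mult(A\otimes\hat{C})\) corresponds to a unique Hopf \(^*\)\nb-homomorphism \(\hat{\varphi}\colon\hat{A}^\univ\to\hat{C}\), characterised by \((\Id_A\otimes\hat{\varphi})(\maxcorep^{\hat{A}})=\hat{V}\) for the universal corepresentation \(\maxcorep^{\hat{A}}\in\U\Mult(A\otimes\hat{A}^\univ)\) of~\(\hat{A}\). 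This is the asserted dual homomorphism. I expect no substantial obstacle beyond the above bookkeeping of double duals: the content of the corollary is carried entirely by Propositions~\ref{pro:qg_universal} and~\ref{pro:dual_bicharacter}, and the contravariant functoriality recorded at the end of Proposition~\ref{pro:dual_bicharacter} furthermore ensures that the assignment \(\varphi\mapsto\hat{\varphi}\) is compatible with composition.
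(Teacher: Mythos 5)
Your proposal is correct and follows exactly the paper's own argument: apply Proposition~\ref{pro:qg_universal} to obtain the bicharacter \(V\in\U\Mult(\hat{C}\otimes A)\), dualise it via Proposition~\ref{pro:dual_bicharacter}, and apply Proposition~\ref{pro:qg_universal} again to the resulting bicharacter from \(\hat{A}\) to \(\hat{C}\). Your extra remarks on the double-dual identification and functoriality are accurate bookkeeping but not needed beyond what the paper records.
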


\begin{proof}
  By Proposition~\ref{pro:qg_universal}, a Hopf \(^*\)\nb-homomorphism \(\varphi\colon C^\univ \to A\) corresponds to a bicharacter~\(V\) in \(\U\Mult(\hat{C}\otimes A)\).  By Proposition~\ref{pro:dual_bicharacter}, \(\sigma(V^*)\) is a bicharacter from~\(\hat{A}\) to~\(\hat{C}\), which yields a Hopf \(^*\)\nb-homomorphism \(\varphi\colon \hat{A}^\univ \to \hat{C}\) by Proposition~\ref{pro:qg_universal}.
\end{proof}

We are going to show that Hopf \(^*\)\nb-homomorphisms from \((C^\univ,\Comult_{C^\univ})\) to \((A,\Comult_A)\) lift uniquely to Hopf \(^*\)\nb-homomorphisms from \((C^\univ,\Comult_{C^\univ})\) to \((A^\univ,\Comult_{A^\univ})\).  Together with Proposition~\ref{pro:qg_universal}, this yields a bijection between homomorphisms of quantum groups in our sense and Hopf \(^*\)\nb-homomorphisms between the associated universal quantum groups.
The main ingredient is the universal bicharacter \(\unibich\in\U\Mult(\hat{C}^\univ\otimes C^\univ)\).  For quantum groups with Haar weights, it is constructed in \cite{Kustermans:LCQG_universal}*{Proposition 6.4}.  First we carry this construction over to the setting of~\cite{Soltan-Woronowicz:Multiplicative_unitaries}.

The bicharacter~\(\multunit\) of~\(C\) is also a left corepresentation.  Hence the universal property yields a reducing \Star{}homomorphism \(\Lambda\colon C^\univ\to C\) with
\begin{equation}
  \label{eq:maxcorep_to_multunit}
  (\Id_{\hat{C}}\otimes\Lambda)(\maxcorep) = \multunit.
\end{equation}
The constructions above for the dual of~\(C\) yields a maximal left corepresentation \(\dumaxcorep\in\U\Mult(\hat{C}^\univ\otimes C)\) of~\(C\) and a reducing \Star{}homomorphism \(\hat{\Lambda}\colon \hat{C}^\univ\to \hat{C}\) with
\begin{equation}
  \label{eq:dumaxcorep_to_multunit}
  (\Lambda\otimes\Id_C)(\dumaxcorep) = \multunit.
\end{equation}
We want to find \(\unibich\in\U\Mult(\hat{C}^\univ\otimes C^\univ)\) with \((\hat{\Lambda}\otimes\Id_{C^\univ})(\unibich) = \maxcorep\) and \((\Id_{\hat{C}^\univ}\otimes\Lambda)(\unibich) = \dumaxcorep\).

Using~\eqref{eq:Delta_via_W}, we may rewrite the fact that~\(\dumaxcorep\) is a corepresentation in the second variable as a pentagon equation
\begin{equation}
  \label{eq:VVW_pentagon1}
  \multunit_{23} \dumaxcorep_{12} =
  \dumaxcorep_{12} \dumaxcorep_{13} \multunit_{23}
  \qquad \text{in \(\U\Mult(\hat{C}^\univ \otimes \Comp(\Hils_C) \otimes C)\).}
\end{equation}
Similarly, using~\eqref{eq:hat_Delta_via_W} and that~\(\maxcorep\) is a corepresentation in the first variable, we get the pentagon equation
\begin{equation}
  \label{eq:VVW_pentagon2}
  \maxcorep_{23} \multunit_{12}
  = \multunit_{12} \maxcorep_{13} \maxcorep_{23}
  \qquad \text{in \(\U\Mult(\hat{C} \otimes \Comp(\Hils_C) \otimes C^\univ)\).}
\end{equation}
In both cases, we represent the second tensor factors \(C\) and~\(\hat{C}\) (faithfully) on~\(\Hils_C\) to make sense of the pentagon equation.  We may now characterise~\(\unibich\) by a variant of the pentagon equation as in \cite{Kustermans:LCQG_universal}*{Proposition 6.4}.

\begin{proposition}
  \label{pro:universal_corepresentation}
  There is a unique \(\unibich\in\U\Mult(\hat{C}^\univ\otimes C^\univ)\) such that
  \[
  \maxcorep_{23}\dumaxcorep_{12} = \dumaxcorep_{12} \unibich_{13}\maxcorep_{23}
  \qquad \text{in \(\U\Mult(\hat{C}^\univ \otimes \Comp(\Hils_C) \otimes C^\univ)\).}
  \]
  Moreover, this~\(\unibich\) is a bicharacter, and it satisfies
  \begin{align}
    \label{eq:univ_corep_of_C_from_U}
    (\Id_{\hat{C}^\univ}\otimes\Lambda)\unibich &= \dumaxcorep,\\
    \label{eq:univ_corep_of_hat_C_from_U}
    (\hat{\Lambda}\otimes\Id_{C^\univ})\unibich &= \maxcorep,\\
    \label{eq:reduced_bicharacter_from_U}
    (\hat{\Lambda}\otimes\Lambda)\unibich &= \multunit.
  \end{align}
\end{proposition}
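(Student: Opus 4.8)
The plan is to solve the displayed equation explicitly for~\(\unibich_{13}\) and then apply Theorem~\ref{the:co-invariant_type_operators} to show that the solution does not depend on the middle leg. Rearranging the defining equation gives \(\unibich_{13} = \dumaxcorep_{12}^*\maxcorep_{23}\dumaxcorep_{12}\maxcorep_{23}^*\), so I would set
\[
\tilde{\unibich} \defeq \dumaxcorep_{12}^*\maxcorep_{23}\dumaxcorep_{12}\maxcorep_{23}^* \in \U\Mult(\hat{C}^\univ\otimes\Comp(\Hils_C)\otimes C^\univ)
\]
and try to prove that \(\tilde{\unibich}\in\U\Mult(\hat{C}^\univ\otimes 1\otimes C^\univ)\). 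Writing \(\tilde{\unibich}=\unibich_{13}\) for the resulting \(\unibich\in\U\Mult(\hat{C}^\univ\otimes C^\univ)\) then yields both existence and uniqueness, since any solution of the defining equation must equal~\(\tilde{\unibich}\).

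The main work is the invariance identity in the four-leg picture \(\hat{C}^\univ\otimes\Hils_C\otimes\Hils_C\otimes C^\univ\), namely
\[
\Multunit_{23}\,\tilde{\unibich}_{124} = \tilde{\unibich}_{134}\,\Multunit_{23},
\]
where \(\Multunit=\Multunit^C\) acts on the two middle copies of~\(\Hils_C\). I would prove this by the same pentagon bookkeeping as in Lemma~\ref{lem:composition_exists}: conjugating \(\tilde{\unibich}_{124}=\dumaxcorep_{12}^*\maxcorep_{24}\dumaxcorep_{12}\maxcorep_{24}^*\) by \(\Multunit_{23}\) and pushing \(\Multunit_{23}\) rightwards through each factor, using \eqref{eq:VVW_pentagon1} in the form \(\Multunit_{23}\dumaxcorep_{12}\Multunit_{23}^* = \dumaxcorep_{12}\dumaxcorep_{13}\), \eqref{eq:VVW_pentagon2} in the form \(\maxcorep_{34}\Multunit_{23}=\Multunit_{23}\maxcorep_{24}\maxcorep_{34}\), and the commutation of operators on disjoint legs. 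The terms telescope and leave exactly \(\dumaxcorep_{13}^*\maxcorep_{34}\dumaxcorep_{13}\maxcorep_{34}^*\,\Multunit_{23}=\tilde{\unibich}_{134}\Multunit_{23}\). Since \(\tilde{\unibich}\in\Mult(\Comp(\Hils_C)\otimes D)\) with \(D=\hat{C}^\univ\otimes C^\univ\), this is precisely the hypothesis of the general form of Theorem~\ref{the:co-invariant_type_operators}, with the two copies of~\(\Hils_C\) playing the two \(\Hils\)-legs and~\(D\) as coefficient algebra; the theorem then forces \(\tilde{\unibich}\in\C\cdot 1_{\Hils_C}\otimes\Mult(D)\), as wanted. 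I expect this invariance computation, together with the leg-bookkeeping needed to match Theorem~\ref{the:co-invariant_type_operators}, to be the only real obstacle.

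Once~\(\unibich\) exists, the three reduction identities follow by slicing the defining equation and using uniqueness. Applying \(\Id\otimes\Id\otimes\Lambda\) and \eqref{eq:maxcorep_to_multunit} turns the defining equation into \(\multunit_{23}\dumaxcorep_{12}=\dumaxcorep_{12}\bigl((\Id\otimes\Lambda)\unibich\bigr)_{13}\multunit_{23}\); comparing with \eqref{eq:VVW_pentagon1} and cancelling the surrounding unitaries gives \eqref{eq:univ_corep_of_C_from_U}. Symmetrically, applying \(\hat{\Lambda}\otimes\Id\otimes\Id\) and \eqref{eq:dumaxcorep_to_multunit}, then comparing with \eqref{eq:VVW_pentagon2}, gives \eqref{eq:univ_corep_of_hat_C_from_U}; and \eqref{eq:reduced_bicharacter_from_U} follows from either one by applying the remaining reducing homomorphism and using \eqref{eq:maxcorep_to_multunit} or \eqref{eq:dumaxcorep_to_multunit} once more.

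It remains to check that~\(\unibich\) is a bicharacter, again by slicing and comparison but now with the comultiplications. For \eqref{eq:linking_unitary_comult_A} I would apply \(\Id\otimes\Id\otimes\Comult_{C^\univ}\) to the defining equation; using \((\Id_{\hat{C}}\otimes\Comult_{C^\univ})\maxcorep=\maxcorep_{12}\maxcorep_{13}\) and then applying the defining equation twice in the resulting four-leg picture, the left-hand side reorganises as \(\dumaxcorep_{12}\unibich_{13}\unibich_{14}\maxcorep_{23}\maxcorep_{24}\), whence cancelling unitaries gives \((\Id_{\hat{C}^\univ}\otimes\Comult_{C^\univ})\unibich=\unibich_{12}\unibich_{13}\). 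The corepresentation identity \eqref{eq:linking_unitary_comult_C_hat} is obtained in the same way by applying \(\Comult_{\hat{C}^\univ}\otimes\Id\otimes\Id\) and using the analogous property \((\Comult_{\hat{C}^\univ}\otimes\Id_C)\dumaxcorep=\dumaxcorep_{23}\dumaxcorep_{13}\) of~\(\dumaxcorep\). This completes the proof.
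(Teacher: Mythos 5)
Your proposal is correct and follows essentially the same route as the paper: the same candidate \(\dumaxcorep_{12}^*\maxcorep_{23}\dumaxcorep_{12}\maxcorep_{23}^*\), the same conjugation-by-\(\Multunit_{23}\) identity verified via the pentagon equations \eqref{eq:VVW_pentagon1} and \eqref{eq:VVW_pentagon2} so that Theorem~\ref{the:co-invariant_type_operators} kills the middle leg, and the same slicing arguments for \eqref{eq:univ_corep_of_C_from_U}--\eqref{eq:reduced_bicharacter_from_U} and the bicharacter property. No gaps.
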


\begin{proof}
  Let \(\unibich' \defeq \dumaxcorep_{12}^* \maxcorep_{23}\dumaxcorep_{12}\maxcorep_{23}^*\).  First, we show \(\unibich'\in \Mult(\hat{C}^\univ\otimes 1\otimes C^\univ)\), that is, \(\unibich' = \unibich_{13}\) for some \(\unibich\in\U\Mult(\hat{C}^\univ\otimes C^\univ)\).  Obviously, this unitary is the unique solution of our problem.  Then we establish that~\(\unibich\) is a bicharacter.

  The first step follows once again from Theorem~\ref{the:co-invariant_type_operators}.  We compute
  \begin{align*}
    \Multunit_{23} \unibich'_{124} \Multunit_{23}^*
    &= \multunit_{23}\dumaxcorep_{12}^*\maxcorep_{24}\dumaxcorep_{12}\maxcorep_{24}^* \multunit_{23}^*\\
    &= \dumaxcorep_{13}^*\dumaxcorep_{12}^*\multunit_{23}\maxcorep_{24}\dumaxcorep_{12} \maxcorep_{24}^*\multunit_{23}^*\\
    &= \dumaxcorep_{13}^*\dumaxcorep_{12}^*\maxcorep_{34}\multunit_{23}\maxcorep_{34}^* \dumaxcorep_{12}\maxcorep_{34}\multunit_{23}^*\maxcorep_{34}^*\\
    &= \dumaxcorep_{13}^*\dumaxcorep_{12}^*\maxcorep_{34}\multunit_{23}\dumaxcorep_{12} \multunit_{23}^*\maxcorep_{34}^*\\
    &= \dumaxcorep_{13}^*\dumaxcorep_{12}^*\maxcorep_{34}\dumaxcorep_{12}\dumaxcorep_{13} \maxcorep_{34}^*
    = \dumaxcorep_{13}^*\maxcorep_{34}\dumaxcorep_{13}\maxcorep_{34}^*
    = \unibich'_{134};
  \end{align*}
  the first step is the definition of~\(\unibich'\); the second step uses~\eqref{eq:VVW_pentagon1}; the third step uses~\eqref{eq:VVW_pentagon2} twice; the fourth step uses that \(\maxcorep_{34}^*\) and~\(\multunit_{12}\) commute; the fifth step again uses~\eqref{eq:VVW_pentagon1}; and the sixth step follows because \(\maxcorep_{34}\) and~\(\dumaxcorep_{12}\) commute.  Now Theorem~\ref{the:co-invariant_type_operators} yields \(\unibich'\in\U\Mult(\hat{C}^\univ\otimes 1\otimes C^\univ)\), so that~\(\unibich\) exists.

  Now we show that~\(\unibich\) is a corepresentation in the second variable:
  \begin{multline*}
    (\Id_{\hat{C}^\univ}\otimes\Id_C\otimes\Comult_{C^\univ})\dumaxcorep_{12}^*\maxcorep_{23}
    \dumaxcorep_{12}\maxcorep_{23}^*
    = \dumaxcorep_{12}^*\maxcorep_{23}\maxcorep_{24}\dumaxcorep_{12} \maxcorep_{24}^*\maxcorep_{23}^*
    \\= \unibich_{13}\maxcorep_{23}\dumaxcorep_{12}^*\maxcorep_{24}\dumaxcorep_{12} \maxcorep_{24}^*\maxcorep_{23}^*
    = \unibich_{13}\maxcorep_{23}\unibich_{14}\maxcorep_{23}^*
    = \unibich_{13}\unibich_{14}.
  \end{multline*}
  A similar computation works in the first variable.  Thus~\(\unibich\) is a bicharacter.

  The following computation yields~\eqref{eq:univ_corep_of_C_from_U}:
  \begin{multline*}
    (\Id_{\hat{C^\univ}}\otimes\Id_C\otimes\Lambda)\unibich_{13}
    = (\Id_{\hat{C}^\univ}\otimes\Id_C\otimes\Lambda)\dumaxcorep_{12}^*\maxcorep_{23}\dumaxcorep_{12}\maxcorep_{23}^*\\
    = \dumaxcorep_{12}^*\multunit_{23}\dumaxcorep_{12}\multunit_{23}^*
    = \dumaxcorep_{12}^*\dumaxcorep_{12}\dumaxcorep_{13}
    = \dumaxcorep_{13}.
  \end{multline*}
  A similar computation yields~\eqref{eq:univ_corep_of_hat_C_from_U}.  Then~\eqref{eq:reduced_bicharacter_from_U} follows from~\eqref{eq:maxcorep_to_multunit} or~\eqref{eq:dumaxcorep_to_multunit}.
\end{proof}

\begin{definition}
  \label{def:univ_co_representation}
  The unitary multiplier~\(\unibich\) in Proposition~\ref{pro:universal_corepresentation} is called the \emph{universal bicharacter} of \((C,\Comult_C)\).
\end{definition}

\begin{lemma}
  \label{lemm:equal_corep_of_univ_object_slice_by_reducing_morph}
  Let \(X, Y\in\U\Mult(C\otimes A^\univ)\) be corepresentations in the second variable.  Let \(\pi\colon A^\univ\to A\) be the reducing \Star{}homomorphism.  If \((\Id_C\otimes\pi)X=(\Id_C\otimes\pi)Y\), then \(X=Y\).  A similar statement holds in the first variable.
\end{lemma}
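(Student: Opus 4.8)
The plan is to prove the second–variable statement; the first–variable case then follows by applying it to the flipped corepresentations $\flip X,\flip Y$, or by passing to duals as in Proposition~\ref{pro:dual_bicharacter}. So it suffices to show that a corepresentation $X\in\U\Mult(C\otimes A^\univ)$ in the second variable, that is, one with $(\Id_C\otimes\Comult_{A^\univ})X=X_{12}X_{13}$, is determined by its reduction $W\defeq(\Id_C\otimes\pi)X\in\U\Mult(C\otimes A)$. The tools I would use are the universal corepresentation $\maxcorep^A\in\U\Mult(\hat A\otimes A^\univ)$ of~$\hat A$, which is a bicharacter and satisfies $(\Id_{\hat A}\otimes\pi)\maxcorep^A=\multunit^A$, together with its universal property, and the universal bicharacter $\unibich^A\in\U\Mult(\hat A^\univ\otimes A^\univ)$ from Proposition~\ref{pro:universal_corepresentation}, with the analogues of \eqref{eq:univ_corep_of_C_from_U}–\eqref{eq:reduced_bicharacter_from_U} for~$A$.

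First I would encode~$X$ as a coaction. Consider
\[
G\defeq X_{23}^{*}\,\maxcorep^A_{13}\,X_{23}\in\U\Mult(\hat A\otimes C\otimes A^\univ).
\]
Using that $\maxcorep^A$ is a left corepresentation of~$\hat A$ in the first leg and that $X$ is a corepresentation of~$A^\univ$ in the last leg, a leg–numbering computation of exactly the kind in the proof of Lemma~\ref{lem:composition_exists} shows that $G$ is again a left corepresentation of~$\hat A$, now with coefficients in $C\otimes A^\univ$. By the universal property of~$A^\univ$ there is a unique morphism $\Phi\colon A^\univ\to C\otimes A^\univ$ with $G=(\Id_{\hat A}\otimes\Phi)\maxcorep^A$; concretely $\Phi=\operatorname{Ad}_{X^{*}}$, that is $\Phi(b)=X^{*}(1\otimes b)X$.

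Next I would reduce. Applying $\Id_{\hat A}\otimes\Id_C\otimes\pi$ and using $(\Id\otimes\pi)\maxcorep^A=\multunit^A$ and $(\Id\otimes\pi)X=W$ gives $(\Id\otimes\Id\otimes\pi)G=W_{23}^{*}\,\multunit^A_{13}\,W_{23}$, which depends on~$X$ only through~$W$. Since this is a left corepresentation of~$\hat A$ with coefficients in $C\otimes A$ and equals $(\Id_{\hat A}\otimes[(\Id_C\otimes\pi)\Phi])\maxcorep^A$, the uniqueness half of the universal property of~$A^\univ$ (now for the coefficient algebra $C\otimes A$) shows that the reduced coaction $(\Id_C\otimes\pi)\circ\Phi\colon A^\univ\to C\otimes A$ is determined by~$W$. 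Consequently, if $X$ and $Y$ both reduce to~$W$, then their associated morphisms satisfy $(\Id_C\otimes\pi)\Phi_X=(\Id_C\otimes\pi)\Phi_Y$.

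The main obstacle is the final lifting step: promoting this equality of \emph{reduced} coactions to $\Phi_X=\Phi_Y$, equivalently to $X=Y$. This is where the genuine content of the lemma lies, because $\pi$ is not injective; in particular Theorem~\ref{the:co-invariant_type_operators}, which lives on~$\Hils_A$ where $A^\univ$ is only visible through $\pi(A^\univ)=A$, cannot by itself separate $X$ from~$Y$. To overcome this I would rerun the encoding with the universal bicharacter $\unibich^A$ in place of~$\maxcorep^A$, so that the coefficient data lands in the universal dual~$\hat A^\univ$ rather than in~$A$: writing $U\defeq YX^{*}$ one has, via the counit~$\epsilon$ of the universal quantum group~$A^\univ$, that $(\Id_C\otimes\epsilon)X=(\Id_C\otimes\epsilon)Y=1$ and hence $(\Id_C\otimes\epsilon)U=1$; combining this with the relations $(\Id\otimes\pi)\unibich^A=\dumaxcorep^A$ and $(\hat\pi\otimes\Id)\unibich^A=\maxcorep^A$, where $\hat\pi\colon\hat A^\univ\to\hat A$ is the reducing morphism, the reduced morphism should determine a single morphism out of~$\hat A^\univ$ through the universal property of~$\hat A^\univ$ (which governs reduced corepresentations of~$A$), and feeding this morphism back through~$\unibich^A$ reconstructs~$X$ from~$W$. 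Establishing that every such~$X$ indeed factors through~$\unibich^A$ in this way—that is, controlling the lift along~$\pi$ at the level of the universal objects—is the step I expect to require the most care.
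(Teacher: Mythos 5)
Your proposal sets up correct machinery but stops exactly where the content of the lemma begins. The encoding of $X$ as $\Phi_X=\operatorname{Ad}_{X^*}$ via the universal property, and the observation that $(\Id_C\otimes\pi)\circ\Phi_X$ is determined by $W\defeq(\Id_C\otimes\pi)X$, are correct but essentially tautological: of course the reduced data is determined by the reduction. The lemma asserts that the \emph{unreduced} $X$ is determined, and you concede you have not established this. Moreover, the strategy sketched in your last paragraph is an \emph{existence} argument rather than a \emph{uniqueness} argument: writing $W=(\rho\otimes\Id_A)(\dumaxcorep^A)$ via the universal property of $\hat A^\univ$ and forming $(\rho\otimes\Id_{A^\univ})(\unibich^A)$ produces \emph{one} corepresentation of $A^\univ$ lifting $W$, but does not show that the given $X$ coincides with it --- and that is exactly what ``every such $X$ factors through $\unibich^A$'' would mean. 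The counit observation $(\Id_C\otimes\epsilon)(Y^*X)=1$ does not close the gap either, since a priori $Y^*X$ need not lie in $\Mult(C)\otimes 1$. (The paper itself gives no argument here; it defers verbatim to Kustermans' Result~6.1.)

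The missing mechanism is the mixed comultiplication $(\pi\otimes\Id_{A^\univ})\Comult_{A^\univ}$. The pentagon equation \eqref{eq:VVW_pentagon2} for~$A$, together with the density of slices of~$\maxcorep^A$ in~$A^\univ$, gives the implementation
\[
(\pi\otimes\Id_{A^\univ})\Comult_{A^\univ}(b)=\maxcorep^A\bigl(\pi(b)\otimes 1\bigr)(\maxcorep^A)^*
\qquad\text{in } \Mult(\Comp(\Hils_A)\otimes A^\univ).
\]
Now set $U\defeq Y^*X$. Applying $\Id_C\otimes\pi\otimes\Id_{A^\univ}$ to $(\Id_C\otimes\Comult_{A^\univ})(U)=Y^*_{13}Y^*_{12}X_{12}X_{13}$ and using $(\Id_C\otimes\pi)X=(\Id_C\otimes\pi)Y=W$ yields
\[
\bigl(\Id_C\otimes(\pi\otimes\Id_{A^\univ})\Comult_{A^\univ}\bigr)(U)
= Y^*_{13}W^*_{12}W_{12}X_{13}=U_{13};
\]
on the other hand, by the implementation formula this left-hand side equals $\maxcorep^A_{23}\bigl[(\Id_C\otimes\pi)(U)\bigr]_{12}(\maxcorep^A_{23})^*=1$, because $(\Id_C\otimes\pi)(U)=W^*W=1$. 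Hence $U=1$ and $X=Y$. This short computation is the step your write-up lacks; once it is in place, the $\Phi$-machinery in your first half can be discarded entirely.
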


\begin{proof}
  Copy the proof of \cite{Kustermans:LCQG_universal}*{Result 6.1}.
\end{proof}

\begin{proposition}
  \label{pro:unique_lift_bicharacter}
  A bicharacter in \(\U\Mult(\hat{C}\otimes A)\) lifts uniquely to a bicharacter in \(\U\Mult(\hat{C}^\univ\otimes A^\univ)\) and hence to bicharacters in \(\U\Mult(\hat{C}\otimes A^\univ)\) and \(\U\Mult(\hat{C}^\univ\otimes A)\).
\end{proposition}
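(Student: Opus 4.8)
The plan is to translate everything into Hopf \(^*\)\nb-homomorphisms via Proposition~\ref{pro:qg_universal} and to exploit the universal bicharacters together with the corepresentation uniqueness in Lemma~\ref{lemm:equal_corep_of_univ_object_slice_by_reducing_morph}. Throughout I decorate the reducing homomorphisms \(\Lambda\), \(\hat\Lambda\) and the objects \(\maxcorep\), \(\dumaxcorep\), \(\unibich\) of Proposition~\ref{pro:universal_corepresentation} with a superscript \(C\) or~\(A\) to indicate the quantum group they belong to. I would settle uniqueness first, as it is the easy half. A bicharacter \(\tilde V\in\U\Mult(\hat C^\univ\otimes A^\univ)\) is, by~\eqref{eq:linking_unitary_comult_A}, a corepresentation of~\(A^\univ\) in its second leg, so Lemma~\ref{lemm:equal_corep_of_univ_object_slice_by_reducing_morph} shows that it is determined by \((\Id_{\hat C^\univ}\otimes\Lambda_A)(\tilde V)\in\U\Mult(\hat C^\univ\otimes A)\); the latter is, by~\eqref{eq:linking_unitary_comult_C_hat}, a left corepresentation of~\(\hat C^\univ\) in its first leg and hence, again by Lemma~\ref{lemm:equal_corep_of_univ_object_slice_by_reducing_morph}, determined by its further reduction \((\hat\Lambda_C\otimes\Lambda_A)(\tilde V)=V\). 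Thus a bicharacter lifting~\(V\) is unique, and the two mixed lifts in \(\U\Mult(\hat C\otimes A^\univ)\) and \(\U\Mult(\hat C^\univ\otimes A)\) are obtained, and seen to be unique, by reducing~\(\tilde V\) in a single leg. It remains to construct~\(\tilde V\).

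By Proposition~\ref{pro:qg_universal}, the bicharacter~\(V\) corresponds to a Hopf \(^*\)\nb-homomorphism \(\varphi\colon C^\univ\to A\) with \((\Id_{\hat C}\otimes\varphi)(\maxcorep^C)=V\). I claim it suffices to lift~\(\varphi\) to a Hopf \(^*\)\nb-homomorphism \(\psi\colon C^\univ\to A^\univ\) with \(\Lambda_A\circ\psi=\varphi\). Indeed, given such a~\(\psi\), put \(\tilde V\defeq(\Id_{\hat C^\univ}\otimes\psi)(\unibich^C)\). Since \(\unibich^C\) is a bicharacter and~\(\psi\) is a Hopf \(^*\)\nb-homomorphism, \(\tilde V\) satisfies~\eqref{eq:linking_unitary_comult_C_hat} (inherited from~\(\unibich^C\)) and~\eqref{eq:linking_unitary_comult_A} (using \(\Comult_{A^\univ}\circ\psi=(\psi\otimes\psi)\circ\Comult_{C^\univ}\)), so it is a bicharacter, and by~\eqref{eq:univ_corep_of_hat_C_from_U} for~\(C\),
\[
(\hat\Lambda_C\otimes\Lambda_A)(\tilde V)=(\hat\Lambda_C\otimes\varphi)(\unibich^C)=(\Id_{\hat C}\otimes\varphi)(\maxcorep^C)=V.
\]
Note that applying~\(\varphi\) to the second leg of~\(\unibich^C\) already lifts the \emph{first} leg of~\(V\) for free; the whole difficulty is concentrated in lifting the second leg, that is, in producing a~\(\psi\) with target~\(A^\univ\).

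The obstruction is that the universal property of~\(A^\univ\) governs Hopf \(^*\)\nb-homomorphisms \emph{out of}~\(A^\univ\), not \emph{into} it. I would circumvent this by dualising. By Corollary~\ref{cor:dual_strong_morph} (via Proposition~\ref{pro:dual_bicharacter}), \(\varphi\) has a dual Hopf \(^*\)\nb-homomorphism \(\hat\varphi\colon\hat A^\univ\to\hat C\), a map \emph{out of} a universal object. Form
\[
W\defeq(\hat\varphi\otimes\Id_{A^\univ})(\unibich^A)\in\U\Mult(\hat C\otimes A^\univ).
\]
As~\(\hat\varphi\) is a Hopf \(^*\)\nb-homomorphism and~\(\unibich^A\) is a bicharacter, \(W\) satisfies both bicharacter conditions, and by~\eqref{eq:univ_corep_of_C_from_U} for~\(A\),
\[
(\Id_{\hat C}\otimes\Lambda_A)(W)=(\hat\varphi\otimes\Id_A)(\dumaxcorep^A)=V,
\]
the last equality being the universal analogue of~\eqref{eq:dual_of_strong}. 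Now \(W\) is a bicharacter in \(\U\Mult(\hat C\otimes A^\univ)\), so Proposition~\ref{pro:qg_universal} with target~\(A^\univ\) turns it into a Hopf \(^*\)\nb-homomorphism \(\psi\colon C^\univ\to A^\univ\) with \((\Id_{\hat C}\otimes\psi)(\maxcorep^C)=W\); comparing reductions gives \(\Lambda_A\circ\psi=\varphi\), exactly the lift required in the previous paragraph.

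The one genuinely delicate point, and the step I would check most carefully, is the identity \((\hat\varphi\otimes\Id_A)(\dumaxcorep^A)=V\): unwinding it means disentangling the flip-and-adjoint conventions relating~\(\dumaxcorep^A\) to the universal left corepresentation of~\(A\) through which~\(\hat\varphi\) is defined in Corollary~\ref{cor:dual_strong_morph}, together with the reduction~\eqref{eq:univ_corep_of_C_from_U}. Everything else is a direct, if slightly lengthy, verification that applying a Hopf \(^*\)\nb-homomorphism to one leg of a universal bicharacter preserves the two bicharacter conditions and intertwines the reductions as stated.
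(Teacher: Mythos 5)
Your proof is correct and uses essentially the same ingredients as the paper's: Proposition~\ref{pro:qg_universal}, the universal bicharacter~\(\unibich\), the duality of Proposition~\ref{pro:dual_bicharacter} to lift the leg for which the universal property points the wrong way, and Lemma~\ref{lemm:equal_corep_of_univ_object_slice_by_reducing_morph} for uniqueness. The only (cosmetic) difference is the order: the paper first lifts \(\hat{C}\) to \(\hat{C}^\univ\) via \((\Id\otimes\varphi)(\unibich^C)\) and then flips and repeats, whereas you first lift the \(A\)\nb-leg via the dual Hopf \(^*\)\nb-homomorphism \(\hat\varphi\) applied to \(\unibich^A\) --- which internally performs the same flip --- and your check that \((\hat\varphi\otimes\Id_A)(\dumaxcorep^A)=V\) does go through with the convention \(\dumaxcorep^A=\sigma\bigl((\maxcorep^{\hat A})^*\bigr)\).
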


\begin{proof}
  These liftings are unique by Lemma~\ref{lemm:equal_corep_of_univ_object_slice_by_reducing_morph}.  It remains to prove existence.  Let \(V\in\U\Mult(\hat{C}\otimes A)\) be a bicharacter.  By Proposition~\ref{pro:qg_universal}, it corresponds to a Hopf \(^*\)\nb-homomorphism \(\varphi\colon C^\univ\to A\).  Let \(\unibich^C\in\U\Mult(\hat{C}^\univ\otimes C^\univ)\) be the universal bicharacter.  Then \(V'\defeq (\Id\otimes\varphi)\unibich^C\in\U\Mult(\hat{C}^\univ\otimes A)\) is a bicharacter that lifts~\(V\).  Now \(\sigma(V')^*\in\U\Mult(A\otimes\hat{C}^\univ)\) is again a bicharacter (see Proposition~\ref{pro:dual_bicharacter}).  Repeating the above step we lift it to a bicharacter~\(V''\) in \(\U\Mult(A^\univ\otimes\hat{C}^\univ)\).  Then \(\sigma(V'')^*\) is the desired lifting of~\(V\).
\end{proof}

Recall that bicharacters form a category and that duality is a functor on this category.  Hopf \(^*\)\nb-homomorphisms \(A^\univ\to C^\univ\) also form the arrows of a category.

\begin{theorem}
  \label{the:linking_universal}
  There is an isomorphism between the categories of locally compact quantum groups with bicharacters from~\(C\) to~\(A\) and with Hopf \(^*\)\nb-homomorphisms \(C^\univ\to A^\univ\) as morphisms \(C\to A\), respectively.  The bicharacter associated to a Hopf \(^*\)\nb-homomorphism \(\varphi\colon C^\univ\to A^\univ\) is \((\Lambda_{\hat{C}}\otimes\Lambda_A\varphi)(\unibich^C) \in \U\Mult(\hat{C}\otimes A)\).

  Furthermore, the duality on the level of bicharacters corresponds to the duality \(\varphi\mapsto\hat{\varphi}\) on Hopf \(^*\)\nb-homomorphisms, where \(\hat{\varphi}\colon \hat{A}^\univ\to\hat{C}^\univ\) is the unique Hopf \(^*\)\nb-homomorphism with \((\hat{\varphi}\otimes\Id_{A^\univ})(\unibich^A) = (\Id_{\hat{C}^\univ}\otimes\varphi)(\unibich^C)\).
\end{theorem}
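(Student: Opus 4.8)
The plan is to exhibit the isomorphism explicitly on arrows and then verify that it preserves identities and composition and intertwines the two dualities, using the universal bicharacter as the main bookkeeping device. On arrows I would combine Proposition~\ref{pro:qg_universal}, applied with the $\Cst$\nb-bialgebra $A^\univ$ in place of $A$, with Proposition~\ref{pro:unique_lift_bicharacter}: a bicharacter $V\in\U\Mult(\hat C\otimes A)$ lifts uniquely to $V'\in\U\Mult(\hat C\otimes A^\univ)$, and $V'=(\Id_{\hat C}\otimes\varphi)(\maxcorep^C)$ for a unique Hopf $^*$\nb-homomorphism $\varphi\colon C^\univ\to A^\univ$. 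The resulting assignment $V\mapsto\varphi$ is a bijection whose inverse sends $\varphi$ to $(\Id_{\hat C}\otimes\Lambda_A\varphi)(\maxcorep^C)$; by~\eqref{eq:univ_corep_of_hat_C_from_U} this equals $(\Lambda_{\hat C}\otimes\Lambda_A\varphi)(\unibich^C)$, the stated formula. The identity bicharacter $\multunit^C$ lifts to $\maxcorep^C=(\Id_{\hat C}\otimes\Id_{C^\univ})(\maxcorep^C)$ and therefore corresponds to $\Id_{C^\univ}$.

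The heart of the matter, and the step I expect to be the main obstacle, is that this bijection turns the composition of bicharacters into composition of Hopf $^*$\nb-homomorphisms. I would isolate the key fact that the lift $V'$ of $V\colon C\to A$ is the composite $\maxcorep^A*V$, where $\maxcorep^A\in\U\Mult(\hat A\otimes A^\univ)$ is read as a bicharacter from $A$ to $A^\univ$; that is, $(\maxcorep^A)_{23}V_{12}=V_{12}V'_{13}(\maxcorep^A)_{23}$. To prove it, the first part of the proof of Lemma~\ref{lem:composition_exists} invokes only $\Multunit^A$ and so applies verbatim with $A^\univ$ as target, showing that $X_{13}\defeq V_{12}^*(\maxcorep^A)_{23}V_{12}(\maxcorep^A)_{23}^*$ lies in $\U\Mult(\hat C\otimes1\otimes A^\univ)$ and hence defines some $X\in\U\Mult(\hat C\otimes A^\univ)$. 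A coproduct computation on the $A^\univ$\nb-leg, formally identical to the one proving that $\unibich$ is a corepresentation in Proposition~\ref{pro:universal_corepresentation} and crucially requiring no multiplicative unitary on $A^\univ$, shows that $X$ is a corepresentation in the second variable; and applying $\Id_{\hat C}\otimes\Lambda_A$ together with~\eqref{eq:maxcorep_to_multunit} turns the defining relation for $X$ into the pentagon~\eqref{eq:linking_unitary_pentagon_A} for $V$, whence $(\Id_{\hat C}\otimes\Lambda_A)X=V$. Since $V'$ is likewise a corepresentation in the second variable reducing to $V$, Lemma~\ref{lemm:equal_corep_of_univ_object_slice_by_reducing_morph} forces $X=V'$. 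Granting this, given $V_1\leftrightarrow\varphi_1$ and $V_2\leftrightarrow\varphi_2$ I apply the morphism $\Id\otimes\Id\otimes\varphi_2$ to the relation $(\maxcorep^A)_{23}(V_1)_{12}=(V_1)_{12}(V_1')_{13}(\maxcorep^A)_{23}$; because $\varphi_2$ is a Hopf $^*$\nb-homomorphism this yields $(V_2')_{23}(V_1)_{12}=(V_1)_{12}(W')_{13}(V_2')_{23}$ with $V_2'=(\Id_{\hat A}\otimes\varphi_2)(\maxcorep^A)$ and $W'=(\Id_{\hat C}\otimes\varphi_2\varphi_1)(\maxcorep^C)$. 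A final application of $\Id\otimes\Id\otimes\Lambda_B$ produces exactly the defining relation~\eqref{eq:link_unit_comp_exist}, so that $(\Id_{\hat C}\otimes\Lambda_B)W'=V_2*V_1$; as $W'$ corresponds to $\varphi_2\circ\varphi_1$, composition is preserved.

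The duality assertion I would deduce from the uniqueness of lifts together with Proposition~\ref{pro:dual_bicharacter}. Writing $\tilde V=(\Id_{\hat C^\univ}\otimes\varphi)(\unibich^C)$ for the double universal lift of $V$, the bicharacter $\sigma(\tilde V^*)$ reduces to $\sigma(V^*)$ and, by uniqueness, is the double universal lift of the dual bicharacter; similarly $\unibich^{\hat A}=\sigma((\unibich^A)^*)$, since both sides reduce to $\multunit^{\hat A}$ by~\eqref{eq:reduced_bicharacter_from_U}. Substituting these into the arrow bijection, the Hopf $^*$\nb-homomorphism $\psi\colon\hat A^\univ\to\hat C^\univ$ attached to $\sigma(V^*)$ is seen, after cancelling $\sigma$ and the adjoint, to satisfy $(\psi\otimes\Id_{A^\univ})(\unibich^A)=(\Id_{\hat C^\univ}\otimes\varphi)(\unibich^C)$, which is exactly the defining equation of $\hat\varphi$. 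Hence $\psi=\hat\varphi$, and the duality functor on bicharacters corresponds to $\varphi\mapsto\hat\varphi$.
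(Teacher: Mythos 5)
Your argument is correct, and the arrow bijection, the treatment of the identity, and the duality part run essentially along the paper's lines (uniqueness of bi-lifts via Lemma~\ref{lemm:equal_corep_of_univ_object_slice_by_reducing_morph} together with \(\unibich^{\hat{A}}=\sigma((\unibich^A)^*)\); the paper phrases this at the reduced level and lifts, you phrase it directly at the level of the double universal lifts). Where you genuinely diverge is the compatibility with composition. The paper proves the duality statement \emph{first} and uses it to write \(\Linkunit{C}{A}=(\Lambda_{\hat{C}}\hat{\varphi}\otimes\Id_{A^\univ})\maxcorep^A\); then both bicharacters being composed become images of \(\dumaxcorep^A\) and \(\maxcorep^A\), and the composition formula~\eqref{eq:link_unit_comp_exist} collapses in one line to the defining relation \((\dumaxcorep^A_{12})^*\maxcorep^A_{23}\dumaxcorep^A_{12}(\maxcorep^A_{23})^*=\unibich^A_{13}\) of Proposition~\ref{pro:universal_corepresentation}. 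You instead prove the auxiliary fact that the lift \(V'\in\U\Mult(\hat{C}\otimes A^\univ)\) of \(V\) is characterised by \(\maxcorep^A_{23}V_{12}=V_{12}V'_{13}\maxcorep^A_{23}\), which requires rerunning the Theorem~\ref{the:co-invariant_type_operators} invariance computation and the corepresentation check with \(A^\univ\) as coefficient algebra (this does go through: the needed pentagon for \(\maxcorep^A\) is~\eqref{eq:VVW_pentagon2}, and Lemma~\ref{lemm:equal_corep_of_univ_object_slice_by_reducing_morph} pins down \(V'\)); you then push this relation forward with \(\Id\otimes\Id\otimes\varphi_2\) and \(\Id\otimes\Id\otimes\Lambda_B\) to land on~\eqref{eq:link_unit_comp_exist}. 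Your route decouples the composition step from the duality step and yields an intrinsic description of the universal lift as a composite with \(\maxcorep^A\), at the price of repeating the invariance argument; the paper's route is shorter because the duality statement has to be proved anyway and then does double duty.
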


\begin{proof}
  Propositions \ref{pro:qg_universal} and~\ref{pro:unique_lift_bicharacter} yield bijections from Hopf \(^*\)\nb-homomorphisms \(C^\univ\to A^\univ\) to bicharacters from~\(C\) to~\(A^\univ\) and on to bicharacters from~\(C\) to~\(A\).  We must check that this bijection preserves the compositions and the duality.  We first turn to the duality because we need this to establish the compatibility with compositions.

  Let \(\varphi\colon C^\univ\to A^\univ\) be a Hopf \(^*\)\nb-homomorphism.  Let \(V\defeq (\Lambda_{\hat{C}}\otimes \Lambda_A\varphi)(\unibich^C) \in \U\Mult(\hat{C}\otimes A)\) be the associated bicharacter.  The duality on the level of bicharacters yields the bicharacter \(\sigma(V^*) \in \U\Mult(A\otimes\hat{C})\) from~\(\hat{A}\) to~\(\hat{C}\).  This corresponds to a unique Hopf \(^*\)\nb-homomorphism \(\hat\varphi\colon \hat{A}^\univ\to\hat{C}^\univ\) with \((\Lambda_A\otimes\Lambda_{\hat{C}}\hat\varphi)(\unibich^{\hat{A}}) = \sigma(V)^*\).  Now we use \(\unibich^{\hat{A}} = \sigma(\unibich^A)^*\) to rewrite this as
  \[
  (\Lambda_{\hat{C}}\otimes\Lambda_A\varphi)(\unibich^C)
  = (\Lambda_{\hat{C}}\hat\varphi\otimes\Lambda_A)(\unibich^A).
  \]
  Both \((\Id\otimes\varphi)(\unibich^C)\) and \((\hat\varphi\otimes\Id)(\unibich^A)\) are bicharacters.  Applying Lemma~\ref{lemm:equal_corep_of_univ_object_slice_by_reducing_morph} to both tensor factors, we get first \((\Id_{\hat{C}^\univ}\otimes\Lambda_A\varphi)(\unibich^C) = (\hat\varphi\otimes\Lambda_A)(\unibich^A)\) and then \((\Id_{\hat{C}^\univ}\otimes\varphi)(\unibich^C) = (\hat\varphi\otimes\Id_{A^\univ})(\unibich^A)\).  This yields the asserted description of duality.

  Now let \(\varphi\colon C^\univ\to A^\univ\) and \(\psi\colon A^\univ\to B^\univ\) be Hopf \(^*\)\nb-homomorphisms and let \(\Linkunit{C}{A}\in\U\Mult(\hat{C}\otimes A)\) and \(\Linkunit{A}{B}\in\U\Mult(\hat{A}\otimes B)\) be the corresponding bicharacters,
  \begin{align*}
    \Linkunit{A}{B}
    &= (\Lambda_{\hat{A}}\otimes\Lambda_B\psi)\unibich^A
    = (\Id_{\hat{A}^\univ}\otimes\Lambda_B\psi)\maxcorep^A,
    \\
    \Linkunit{C}{A}
    &= (\Lambda_{\hat{C}}\hat{\varphi}\otimes\Lambda_A)\unibich^A
    = (\Lambda_{\hat{C}}\hat{\varphi}\otimes\Id_{A^\univ})\maxcorep^A,
  \end{align*}
  where we use the dual quantum group homomorphism \(\hat{\varphi}\colon \hat{A}^\univ\to \hat{C}^\univ\).  Now
  \begin{align*}
    (\Linkunit{A}{B} * \Linkunit{C}{A})_{13} &=
    (\Linkunit{C}{A}_{12})^* \Linkunit{A}{B}_{23}\Linkunit{C}{A}_{12}(\Linkunit{A}{B}_{23})^*
    \\&= (\Lambda_A\hat{\varphi}\otimes\Id_{\Bound(\Hils_A)}\otimes\Lambda_B\psi)
    ((\dumaxcorep^A_{12})^*\maxcorep^A_{23}\dumaxcorep^A_{12}(\maxcorep^A_{23})^*)
    \\&= (\Lambda_A\hat{\varphi}\otimes\Id_{\Bound(\Hils_A)}\otimes\Lambda_B\psi)
    (\unibich^A_{13})
  \end{align*}
  by Proposition~\ref{pro:universal_corepresentation}.  Thus
  \begin{multline*}
    \Linkunit{A}{B} * \Linkunit{C}{A}
    = (\Lambda_A\hat{\varphi}\otimes\Lambda_B\psi)(\unibich^A)
    = (\Lambda_A\otimes\Lambda_B\psi)\circ(\hat{\varphi}\otimes \Id_{A^\univ})(\unibich^A)
    \\= (\Lambda_A\otimes\Lambda_B\psi)\circ(\Id_{\hat{C}^\univ} \otimes \varphi) (\unibich^C)
    = (\Lambda_A\otimes\Lambda_B(\psi\circ\phi)) (\unibich^C).
  \end{multline*}
  Hence \(\Linkunit{A}{B} * \Linkunit{C}{A}\) is the bicharacter associated to \(\psi\circ\phi\).  Thus our bijection is compatible with compositions.
\end{proof}

\section{Right and left coactions}
\label{sec:right_left_action}

\begin{definition}
  \label{def:right_quantum_group_homomorphism}
  A \emph{right quantum group homomorphism} from \((C,\Comult_C)\) to \((A,\Comult_A)\) is a morphism \(\Delta_R\colon C\to C\otimes A\) for which the two diagrams in~\eqref{eq:intro_right_homomorphism} commute.
\end{definition}

The second diagram in~\eqref{eq:intro_right_homomorphism} means that~\(\Delta_R\) is an \(A\)\nb-comodule structure on~\(C\).

\begin{example}
  \label{exa:qg_strong_homomorphism}
  A Hopf \(^*\)\nb-homomorphism \(\varphi\colon C\to \Mult(A)\) yields a right quantum group homomorphism by \(\Delta_R \defeq (\Id_C\otimes\varphi)\Comult_C\).
\end{example}

Let \(\multunit\in \U\Mult(\hat{C}\otimes C)\) denote the reduced bicharacter.

\begin{theorem}
  \label{the:linking_right_homomorphism}
  For any right quantum group homomorphism \(\Delta_R\colon C\to C\otimes A\), there is a unique unitary \(V\in\U\Mult(\hat{C}\otimes A)\) with
  \begin{equation}
    \label{eq:def_V_via_right_homomorphism}
    (\Id_{\hat{C}} \otimes \Delta_R)(\multunit) = \multunit_{12}V_{13}.
  \end{equation}
  This unitary is a bicharacter.

  Conversely, let~\(V\) be a bicharacter from~\(C\) to~\(A\), and let \(\mathbb{V}\in\U(\Hils_C\otimes\Hils_A)\) be the corresponding concrete bicharacter.  Then
  \begin{equation}
    \label{eq:Delta_R_via_V}
    \Delta_R(x) \defeq \mathbb{V}(x\otimes1)\mathbb{V}^*\qquad
    \text{for all \(x\in C\)}
  \end{equation}
  defines a right quantum group homomorphism from~\(C\) to~\(A\).

  These two maps between bicharacters and right quantum group homomorphisms are inverse to each other.
\end{theorem}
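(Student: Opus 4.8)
The plan is to prove the three assertions in turn, with the existence half of the first assertion being the crux. Throughout I freely identify the reduced bicharacter \(\multunit^C\) with its concrete form \(\Multunit^C\) as an operator, and I write \(x_1\) for \(x\) acting in the first tensor leg.

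\textbf{Existence and uniqueness of \(V\).} Given a right quantum group homomorphism \(\Delta_R\), I would set \(\tilde V\defeq\multunit_{12}^*(\Id_{\hat C}\otimes\Delta_R)(\multunit)\in\U\Mult(\hat C\otimes C\otimes A)\) and show that its middle leg is trivial, so that \(\tilde V=V_{13}\) for a necessarily unique and unitary \(V\in\U\Mult(\hat C\otimes A)\); rewriting the definition of \(\tilde V\) then gives exactly \eqref{eq:def_V_via_right_homomorphism}, which also forces uniqueness of \(V\). To kill the middle leg I would apply \(\Comult_C\) there and establish the identity \((\Id_{\hat C}\otimes\Comult_C\otimes\Id_A)(\tilde V)=\tilde V_{134}\). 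This is the key step: expanding \((\Id\otimes\Comult_C\otimes\Id)(\Id\otimes\Delta_R)(\multunit)\) by the first diagram of~\eqref{eq:intro_right_homomorphism} and then~\eqref{eq:Delta_W} yields \(\multunit_{12}\cdot[(\Id_{\hat C}\otimes\Delta_R)(\multunit)]_{134}\), while \((\Id\otimes\Comult_C\otimes\Id)(\multunit_{12}^*)=\multunit_{13}^*\multunit_{12}^*\) by~\eqref{eq:Delta_W}; after cancelling the \(\multunit\) factors and reinserting \(\tilde V=\multunit_{12}^*(\Id\otimes\Delta_R)(\multunit)\) one is left with \(\tilde V_{134}\). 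This identity says that \(\Comult_C\) applied to the \(C\)-leg of \(\tilde V\) is supported on the second copy of \(C\), so Corollary~\ref{cor:C_co-invariant_are_scalar_multiple} (the ``\(1\otimes C\)'' case, with coefficient algebra \(D=\hat C\otimes A\)) forces \(\tilde V\in\C\cdot1_C\otimes\Mult(\hat C\otimes A)\), i.e.\ \(\tilde V=V_{13}\).

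\textbf{\(V\) is a bicharacter.} For~\eqref{eq:linking_unitary_comult_A} I would pair the second diagram of~\eqref{eq:intro_right_homomorphism} with \(\multunit\): applying \(\Id_{\hat C}\otimes(\Id_C\otimes\Comult_A)\Delta_R\) to \(\multunit\) gives \(\multunit_{12}[(\Id_{\hat C}\otimes\Comult_A)(V)]_{134}\), while applying \(\Id_{\hat C}\otimes(\Delta_R\otimes\Id_A)\Delta_R\) and using~\eqref{eq:def_V_via_right_homomorphism} twice gives \(\multunit_{12}V_{13}V_{14}\); cancelling \(\multunit_{12}\) yields~\eqref{eq:linking_unitary_comult_A}. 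For~\eqref{eq:linking_unitary_comult_C_hat} I would instead apply \(\Comult_{\hat C}\) to the \(\hat C\)-leg of~\eqref{eq:def_V_via_right_homomorphism}; computing both sides with~\eqref{eq:hat_Delta_W} gives \(\multunit_{23}V_{24}\multunit_{13}V_{14}=\multunit_{23}\multunit_{13}\,[(\Comult_{\hat C}\otimes\Id_A)(V)]_{124}\), and since \(\multunit_{13}\) (legs \(1,3\)) commutes with \(V_{24}\) (legs \(2,4\)) the \(\multunit\) factors cancel to leave \((\Comult_{\hat C}\otimes\Id_A)(V)=V_{23}V_{13}\). (Alternatively, once \(\tilde V=V_{13}\) is known one may verify the two pentagon equations of Lemma~\ref{lemm:equiv_criterion_of_linkunit} for \(\mathbb V\), but these again reduce to the same two diagrams.)

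\textbf{The converse.} Given a bicharacter \(V\) with concrete form \(\mathbb V\), I would first record, using~\eqref{eq:linking_unitary_pentagon_C} in the form \(\mathbb V_{23}\Multunit^C_{12}\mathbb V_{23}^*=\Multunit^C_{12}\mathbb V_{13}\), that \(\Delta_R(x)\defeq\mathbb V(x\otimes1)\mathbb V^*\) satisfies \((\Id_{\hat C}\otimes\Delta_R)(\multunit)=\multunit_{12}V_{13}\). Slicing the first leg by \(\omega\in\hat C_*\) then shows \(\Delta_R\bigl((\omega\otimes\Id)\multunit\bigr)=(\omega\otimes\Id\otimes\Id)(\multunit_{12}V_{13})\in\Mult(C\otimes A)\); as such slices are dense in \(C\) this gives \(\Delta_R(C)\subseteq\Mult(C\otimes A)\), and nondegeneracy follows from the same slice formula together with nondegeneracy of \(\mathbb V\) as a multiplier of \(\Comp(\Hils_C)\otimes A\). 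The two coaction identities are then pentagon computations. Realising \(\Comult_C\otimes\Id_A\) as conjugation by \(\Multunit^C_{12}\) (via~\eqref{eq:Delta_via_W}) and \(\Id_C\otimes\Delta_R\) as conjugation by \(\mathbb V_{23}\), the first diagram reduces via~\eqref{eq:linking_unitary_pentagon_C} and the commutation of \(\mathbb V_{23}\) with \(x_1\) to \(\mathbb V_{23}\Multunit^C_{12}(x_1)(\Multunit^C_{12})^*\mathbb V_{23}^*=\Multunit^C_{12}\mathbb V_{13}(x_1)\mathbb V_{13}^*(\Multunit^C_{12})^*\). Similarly, realising \(\Id_C\otimes\Comult_A\) as conjugation by \(\Multunit^A_{23}\) and \(\Delta_R\otimes\Id_A\) as conjugation by \(\mathbb V_{12}\), the second diagram reduces via~\eqref{eq:linking_unitary_pentagon_A} and the commutation of \(\Multunit^A_{23}\) with \(x_1\) to \(\Multunit^A_{23}\mathbb V_{12}(x_1)\mathbb V_{12}^*(\Multunit^A_{23})^*=\mathbb V_{12}\mathbb V_{13}(x_1)\mathbb V_{13}^*\mathbb V_{12}^*\).

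\textbf{Mutual inverseness and the main obstacle.} The two constructions are inverse because both are pinned down by the single relation~\eqref{eq:def_V_via_right_homomorphism}. Starting from a bicharacter \(V\), the associated \(\Delta_R\) satisfies~\eqref{eq:def_V_via_right_homomorphism} with the same \(V\) by the computation just made, so the uniqueness in the first part returns \(V\). Conversely, starting from a right homomorphism \(\Delta_R\) and its \(V\), the slice computation above (again via~\eqref{eq:linking_unitary_pentagon_C}, now reading \(\mathbb V((\omega\otimes\Id)\multunit\otimes1)\mathbb V^*=(\omega\otimes\Id\otimes\Id)(\multunit_{12}V_{13})\)) shows that \(\Delta_R\) agrees with \(x\mapsto\mathbb V(x\otimes1)\mathbb V^*\) on the dense set of slices \((\omega\otimes\Id)(\multunit)\), hence everywhere by continuity. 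I expect the main obstacle to be the existence step, specifically the leg-cancellation identity \((\Id_{\hat C}\otimes\Comult_C\otimes\Id_A)(\tilde V)=\tilde V_{134}\) that feeds Corollary~\ref{cor:C_co-invariant_are_scalar_multiple}; everything else reduces to bookkeeping with the two pentagon equations and the observation that operators on disjoint legs commute.
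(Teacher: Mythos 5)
Your proposal is correct and follows essentially the same route as the paper: the same regularised unitary \(\tilde V=\multunit_{12}^*(\Id_{\hat C}\otimes\Delta_R)(\multunit)\), the same leg-cancellation identity fed into the invariance result (you cite Corollary~\ref{cor:C_co-invariant_are_scalar_multiple}, the paper applies Theorem~\ref{the:co-invariant_type_operators} directly, which is the same thing), and the same slice computations for the bicharacter identities and the inverse correspondence. The only cosmetic difference is that you verify the two coaction diagrams for the converse at the operator level via the pentagon equations, whereas the paper reduces them to identities on \(\multunit\) by density of slices; both are routine and equivalent.
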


\begin{proof}
  First we check that \(\tilde{V} \defeq \multunit_{12}^*\cdot (\Id_{\hat{C}}\otimes\Delta_R)(\multunit)\) belongs to \(\U\Mult(\hat{C}\otimes 1\otimes A)\), that is, \(\tilde{V}=V_{13}\) for some \(V\in\U\Mult(\hat{C}\otimes A)\).  This is the unique~\(V\) that verifies~\eqref{eq:def_V_via_right_homomorphism}.  We compute
  \begin{align*}
    \Multunit^C_{23} \tilde{V}_{124} (\Multunit^C_{23})^*
    &= \Multunit^C_{23} (\Multunit^C_{12})^* (\Multunit^C_{23})^* \cdot
    \Multunit^C_{23} (\Id_{\hat{C}}\otimes\Delta_R)(\multunit)_{124} (\Multunit^C_{23})^*
    \\&= (\Multunit^C_{13})^* (\Multunit^C_{12})^* \cdot
    (\Id_{\hat{C}}\otimes\Comult_C\otimes\Id_A) (\Id_{\hat{C}}\otimes\Delta_R)(\multunit)
    \\&= (\Multunit^C_{13})^* (\Multunit^C_{12})^* \cdot
    (\Id_{\hat{C}}\otimes(\Id_C\otimes\Delta_R)\Comult_C)\multunit
    \\&= (\Multunit^C_{13})^* (\Multunit^C_{12})^* \cdot
    (\Id_{\hat{C}}\otimes\Id_C\otimes\Delta_R)(\multunit_{12}\multunit_{13})
    \\&= (\Multunit^C_{13})^*
    \bigl((\Id_{\hat{C}}\otimes\Delta_R)\multunit\bigr)_{134};
  \end{align*}
  the first equality is the definition of~\(\tilde{V}\), the second one uses \eqref{eq:pentagon} and~\eqref{eq:Delta_via_W}, the third one~\eqref{eq:intro_right_homomorphism}, the fourth one uses~\eqref{eq:Delta_W}, and the last one is trivial.  Now Theorem~\ref{the:co-invariant_type_operators} yields \(\tilde{V} = V_{13}\) for some \(V\in\U\Mult(\hat{C}\otimes A)\).

  Next we verify that~\(V\) is a bicharacter.  We check~\eqref{eq:linking_unitary_comult_C_hat}:
  \begin{align*}
    \bigl((\Comult_{\hat{C}}\otimes\Id_A)V\bigr)_{124}
    &= (\Comult_{\hat{C}}\otimes\Id_C\otimes\Id_A) \bigl(\multunit_{12}^*\cdot
    (\Id_{\hat{C}}\otimes\Delta_R)(\multunit)\bigr)
    \\& = ((\Comult_{\hat{C}}\otimes\Id_C)\multunit^*)_{123}\cdot
    (\Id_{\hat{C}}\otimes\Id_{\hat{C}}\otimes\Delta_R)(\Comult_{\hat{C}}\otimes\Id_C)(\multunit)
    \\&= (\multunit_{23}\multunit_{13})^*
    (\Id_{\hat{C}}\otimes\Id_{\hat{C}}\otimes\Delta_R)(\multunit_{23}\multunit_{13})
    \\&= \multunit_{13}^* \multunit_{23}^* \multunit_{23}V_{24}\multunit_{13}V_{14}
    = V_{24}V_{14};
  \end{align*}
  the first two equalities use~\eqref{eq:def_V_via_right_homomorphism} and that~\(\Comult_{\hat{C}}\) is a \Star{}homomorphism; the third equality uses~\eqref{eq:hat_Delta_W}; the fourth one uses~\eqref{eq:def_V_via_right_homomorphism} again; and the final step uses that \(\multunit_{13}\) and~\(V_{24}\) commute.  The following computation yields~\eqref{eq:linking_unitary_comult_A}:
  \begin{align*}
    \bigl((\Id_{\hat{C}}\otimes\Comult_A)V\bigr)_{134}
    &= \multunit_{12}^* (\Id_{\hat{C}}\otimes\Id_C\otimes\Comult_A)
    (\Id_{\hat{C}}\otimes\Delta_R)\multunit
    \\&= \multunit_{12}^* (\Id_{\hat{C}}\otimes\Delta_R\otimes\Id_A)
    (\Id_{\hat{C}}\otimes\Delta_R)\multunit
    \\&= \multunit_{12}^* (\Id_{\hat{C}}\otimes\Delta_R\otimes\Id_A)(\multunit_{12}V_{14})
    = V_{13}V_{14};
  \end{align*}
  the first equality follows from~\eqref{eq:def_V_via_right_homomorphism}; the second one from~\eqref{eq:intro_right_homomorphism}; the third and fourth equalities from~\eqref{eq:def_V_via_right_homomorphism}.  Thus we have constructed a bicharacter~\(V\) from a right quantum group homomorphism.

  Conversely, let \(V\in\U\Mult(\hat{C}\otimes A)\) be a bicharacter.  We claim that~\eqref{eq:Delta_R_via_V} defines a morphism from~\(C\) to \(C\otimes A\).  Recall that slices of~\(\Multunit\) by linear functionals \(\omega\in\Bound(\Hils)_*\) generate a dense subspace of~\(C\).  On \(x\defeq (\omega\otimes\Id_\Hils)(\Multunit)\), we compute
  \[
  \Delta_R(x)
  = (\omega\otimes\Id_\Hils\otimes\Id_\Hils)
  (\mathbb{V}_{23}\Multunit_{12}\mathbb{V}^*_{23})
  = (\omega\otimes\Id_\Hils\otimes\Id_\Hils)
  (\Multunit_{12}\mathbb{V}_{13}),
  \]
  and this belongs to \(\Mult(C\otimes A)\).  Thus \(\Delta_R(C)\subseteq \Mult(C\otimes A)\).  It is clear from the definition that~\(\Delta_R\) is non-degenerate.

  We may also rewrite the above computation as \((\omega\otimes\Id_{C\otimes A}) \circ (\Id_{\hat{C}}\otimes\Delta_R)(\multunit) = (\omega\otimes\Id_{C\otimes A})(\multunit_{12}V_{13})\) for all \(\omega\in\Bound(\Hils)_*\).  Since~\(\omega\) is arbitrary, \eqref{eq:def_V_via_right_homomorphism} holds for~\(\Delta_R\) and our original bicharacter~\(V\).

  Now we use~\eqref{eq:def_V_via_right_homomorphism} to check that~\(\Delta_R\) is a right quantum group homomorphism.  The first diagram in~\eqref{eq:intro_right_homomorphism} amounts to
  \[
  (\Id_{\hat{C}}\otimes\Comult_C\otimes\Id_A)(\Id_{\hat{C}}\otimes\Delta_R)(\multunit)
  = (\Id_{\hat{C}}\otimes\Id_C\otimes\Delta_R)(\Id_{\hat{C}}\otimes\Comult_C)(\multunit)
  \]
  because slices of~\(\multunit\) generate~\(C\).  This follows from \eqref{eq:def_V_via_right_homomorphism} and~\eqref{eq:Delta_W}: both sides are equal to~\(\multunit_{12}\multunit_{13}V_{14}\).  Similarly, the second diagram in~\eqref{eq:intro_right_homomorphism} amounts to
  \[
  (\Id_{\hat{C}}\otimes\Id_C\otimes\Comult_A)(\Id_{\hat{C}}\otimes\Delta_R)(\multunit)=
  (\Id_{\hat{C}}\otimes\Delta_R\otimes\Id_A)(\Id_{\hat{C}}\otimes\Delta_R)(\multunit),
  \]
  which follows from \eqref{eq:def_V_via_right_homomorphism} and~\eqref{eq:linking_unitary_comult_A} because both sides are equal to~\(\multunit_{12}V_{13}V_{14}\).

  Thus a bicharacter~\(V\) yields a right quantum group homomorphism~\(\Delta_R\).  Since these are related by~\eqref{eq:def_V_via_right_homomorphism}, we get back the original bicharacter from this right quantum group homomorphism.  It only remains to check that, if we start with a right quantum group homomorphism~\(\Delta_R\), define a bicharacter by~\eqref{eq:def_V_via_right_homomorphism} and then a right quantum group homomorphism by~\eqref{eq:Delta_R_via_V}, we get back the original~\(\Delta_R\).  We may rewrite~\eqref{eq:linking_unitary_pentagon_C} as
  \[
  \mathbb{V}_{23}\Multunit_{12}\mathbb{V}_{23}^*
  = \Multunit_{12}\mathbb{V}_{13}
  = (\Id_{\hat{C}}\otimes\Delta_R)(\Multunit),
  \]
  using~\eqref{eq:def_V_via_right_homomorphism}.  This implies that the original~\(\Delta_R\) satisfies~\eqref{eq:Delta_R_via_V} because the slices of~\(\multunit\) by linear functionals on~\(\hat{C}\) span a dense subspace of~\(C\).
\end{proof}

\begin{definition}
  \label{def:left_quantum_group_homomorphism}
  A \emph{left quantum group homomorphism} from~\((C,\Comult_C)\) to~\((A,\Comult_A)\) is a morphism \(\Delta_L\colon C\to A\otimes C\) such that the following two diagrams commute:
  \[
  \xymatrix@C+2em{
    C \ar[r]^{\Delta_L}\ar[d]_{\Comult_C}&
    A\otimes C \ar[d]^{\Id_A\otimes\Comult_C}\\
    C\otimes C \ar[r]_{\Delta_L\otimes\Id_C}&
    A\otimes C\otimes C,
  }
  \qquad
  \xymatrix@C+2em{
    C \ar[r]^{\Delta_L}\ar[d]_{\Delta_L}&
    A\otimes C \ar[d]^{\Comult_A\otimes\Id_C}\\
    A\otimes C \ar[r]_{\Id_A\otimes\Delta_L}&
    A\otimes A\otimes C.
  }
  \]
\end{definition}

\begin{theorem}
  \label{the:linking_left_homomorphism}
  For any left quantum group homomorphism \(\Delta_L\colon C\to A\otimes C\), there is a unique unitary \(V\in\U\Mult(\hat{C}\otimes A)\) with
  \begin{equation}
    \label{eq:def_V_via_left_homomorphism}
    (\Id_{\hat{C}} \otimes \Delta_L)(\multunit) = V_{12}\multunit_{13}.
  \end{equation}
  This unitary is a bicharacter.

  Conversely, let~\(V\) be a bicharacter from~\(C\) to~\(A\), let \(\mathbb{V}\in\U(\Hils_C\otimes\Hils_A)\) be the corresponding concrete bicharacter, and define~\(\hat{\mathbb{V}}\) as in Proposition~\textup{\ref{pro:dual_bicharacter}}.  Let \(R_A\) and~\(R_C\) be the unitary antipodes of \(A\) and~\(C\).  Then \(\hat{\mathbb{V}}^*(1\otimes R_C(x))\hat{\mathbb{V}} \in \Mult(A\otimes C)\) for all \(x\in C\) and
  \begin{equation}
    \label{eq:Delta_L_via_V}
    \Delta_L(x) \defeq (R_A\otimes R_C)(\hat{\mathbb{V}}^{*}(1\otimes R_C(x))\hat{\mathbb{V}})
    \qquad
    \text{for all \(x\in C\)}
  \end{equation}
  is a left quantum group homomorphism from~\(C\) to~\(A\).

  These two maps between bicharacters and left quantum group homomorphisms are bijective and inverse to each other.
\end{theorem}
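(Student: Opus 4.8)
The plan is to treat this theorem as the mirror image of Theorem~\ref{the:linking_right_homomorphism}, the bridge being the unitary antipodes. Concretely, I would introduce the transformation
\[
\Delta_R\longmapsto (R_A\otimes R_C)\circ\flip\circ\Delta_R\circ R_C
\]
sending a morphism \(C\to C\otimes A\) to a morphism \(C\to A\otimes C\). Since \(R_C,R_A\) are \(\Star\)antiautomorphisms with \(R_C^2=\Id\), \(R_A^2=\Id\), a short check shows this map is a \(\Star\)homomorphism and that the analogous formula \(\Delta_L\mapsto(R_C\otimes R_A)\circ\flip\circ\Delta_L\circ R_C\) is its inverse. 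Using the standard identity \(\Comult_C\circ R_C=\flip\circ(R_C\otimes R_C)\circ\Comult_C\) (and its analogue for~\(A\)) one verifies that this involution carries the two commuting diagrams defining a right quantum group homomorphism exactly onto the two diagrams defining a left one, so it is a bijection between right and left quantum group homomorphisms from~\(C\) to~\(A\). This lets me import the right-hand theory.

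For the first assertion (left homomorphism yields a bicharacter) I would simply mirror the proof of Theorem~\ref{the:linking_right_homomorphism}. Set \(\tilde V\defeq(\Id_{\hat C}\otimes\Delta_L)(\multunit)\cdot\multunit_{13}^{*}\in\U\Mult(\hat C\otimes A\otimes C)\) and show it is constant in the third (i.e.\ \(C\)) leg: conjugating \(\tilde V\) by \(\Multunit^C\) acting on the \(C\)-leg and an auxiliary copy of~\(\Hils_C\), and using the first diagram of~\(\Delta_L\) together with the pentagon equation, one reaches an invariance of the form required by Theorem~\ref{the:co-invariant_type_operators}, which forces \(\tilde V=V_{12}\) for a unique \(V\in\U\Mult(\hat C\otimes A)\). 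This \(V\) is exactly the one satisfying~\eqref{eq:def_V_via_left_homomorphism}. That \(V\) is a bicharacter, i.e.\ satisfies \eqref{eq:linking_unitary_comult_C_hat} and~\eqref{eq:linking_unitary_comult_A}, then follows by the same slicing computations as in the right case, now feeding in the two diagrams of~\(\Delta_L\) and the identities~\eqref{eq:Delta_W} and~\eqref{eq:hat_Delta_W}.

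For the converse I would start from the right quantum group homomorphism \(\Delta_R(x)=\mathbb{V}(x\otimes1)\mathbb{V}^{*}\) attached to~\(V\) by Theorem~\ref{the:linking_right_homomorphism}. The key computation is the identity
\[
\hat{\mathbb{V}}^{*}(1\otimes R_C(x))\hat{\mathbb{V}}
=\flip\bigl(\mathbb{V}(R_C(x)\otimes1)\mathbb{V}^{*}\bigr)
=\flip\bigl(\Delta_R(R_C(x))\bigr),
\]
which follows from \(\hat{\mathbb{V}}=\Flip\mathbb{V}^{*}\Flip\) and the fact that~\(\flip\) is implemented by conjugation with~\(\Flip\) (as in Proposition~\ref{pro:dual_bicharacter}). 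Since \(\Delta_R(R_C(x))\in\Mult(C\otimes A)\) by the converse half of Theorem~\ref{the:linking_right_homomorphism}, its flip lies in \(\Mult(A\otimes C)\), which proves the membership claim and shows that the formula~\eqref{eq:Delta_L_via_V} is precisely the involution of the first paragraph applied to~\(\Delta_R\). Hence \(\Delta_L\) is a left quantum group homomorphism.

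It remains to see that the two assignments are mutually inverse, and this is where the real work lies. By density of the slices \((\omega\otimes\Id_C)(\multunit)\) in~\(C\) and the uniqueness in the first assertion, it suffices to check that the bicharacter attached to \(\Delta_L=(R_A\otimes R_C)\flip\Delta_R R_C\) via~\eqref{eq:def_V_via_left_homomorphism} is the same~\(V\) that produced~\(\Delta_R\) via~\eqref{eq:def_V_via_right_homomorphism}. Computing \((\Id_{\hat C}\otimes\Delta_L)(\multunit)\) and matching it with \(V_{12}\multunit_{13}\) forces the use of the invariances \((R_{\hat C}\otimes R_C)(\multunit)=\multunit\) and \((R_{\hat C}\otimes R_A)(V)=V\), both instances of Proposition~\ref{pro:opplinkunit_same_as_link_unit_in_mult_algebra}. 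I expect the main obstacle to be exactly this step: one must push the antipode \(R_{\hat C}\) through the shared first leg of the two factors \(\multunit_{12}\) and \(V_{13}\), where \(R_{\hat C}\otimes\Id\) is neither a homomorphism nor an antihomomorphism of the tensor product. The conjugation identity of the previous paragraph is what lets me avoid illegitimate slicing, reducing the verification to the already-established concrete pentagon relations of the right-handed theory together with the two antipode invariances.
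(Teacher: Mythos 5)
Your proposal is correct and follows essentially the same route as the paper: everything is reduced to Theorem~\ref{the:linking_right_homomorphism}, and the only genuinely new step is the verification that the map~\eqref{eq:Delta_L_via_V} satisfies~\eqref{eq:def_V_via_left_homomorphism}, which you carry out exactly as the paper does, by combining the conjugation identity \(\hat{\mathbb{V}}^*(1\otimes R_C(x))\hat{\mathbb{V}}=\flip\bigl(\Delta_R(R_C(x))\bigr)\) with the concrete pentagon relation~\eqref{eq:linking_unitary_pentagon_C} and the two antipode invariances of Proposition~\ref{pro:opplinkunit_same_as_link_unit_in_mult_algebra}. Your explicit involution \(\Delta_R\mapsto(R_A\otimes R_C)\circ\flip\circ\Delta_R\circ R_C\) is just a more systematic packaging of what the paper leaves implicit in its computation.
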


\begin{proof}
  As in the proof of Theorem~\ref{the:linking_right_homomorphism}, it may be shown that there is a unique~\(V\) satisfying~\eqref{eq:def_V_via_left_homomorphism} and that~\(\Delta_L\) is a well-defined left quantum group morphism \(C\to A\otimes C\).  The only point in the proof of Theorem~\ref{the:linking_right_homomorphism} that must be modified is to show that~\(\Delta_L\) given by~\eqref{eq:Delta_L_via_V} satisfies~\eqref{eq:def_V_via_left_homomorphism}.  We compute:
  \begin{align*}
    (\Id_{\hat{C}}\otimes\Delta_L)\multunit
    &= (\Id_{\hat{C}}\otimes\Delta_L)((R_{\hat{C}}\otimes R_C)\circ\multunit) \\
    &= (R_{\hat{C}}\otimes R_A\otimes R_{C})(\hat{\mathbb{V}}_{23}^{*}\Multunit_{13}\hat{\mathbb{V}}_{23})\\
    &= (R_{\hat{C}}\otimes R_A\otimes R_{C})\circ\flip_{23}(\mathbb{V}_{23}\Multunit_{12}\mathbb{V}_{23}^{*})\\
    &= (R_{\hat{C}}\otimes R_A\otimes R_{C})(\multunit_{13}V_{12})
    = V_{12}\multunit_{13};
  \end{align*}
  the first step uses Proposition~\ref{pro:opplinkunit_same_as_link_unit_in_mult_algebra} for~\(\multunit\), the second one uses~\eqref{eq:Delta_L_via_V}, the third one is trivial, the fourth one uses~\eqref{eq:linking_unitary_pentagon_C}, and the last one follows from Proposition~\ref{pro:opplinkunit_same_as_link_unit_in_mult_algebra} and the antimultiplicativity of~\(R_{\hat{C}}\).
\end{proof}

\begin{lemma}
  \label{lem:comp_left_right_homomorphism}
  Let \(\Delta_L\colon C\to A\otimes C\) and \(\Delta_R\colon C\to C\otimes B\) be a left and a right quantum group homomorphism.  Then the following diagram commutes:
  \[
   \begin{gathered}
   \label{eq:lef_right_homomorphism}
   \xymatrix@C+2em{
    C \ar[r]^{\Delta_L} \ar[d]_{\Delta_R} &
    A\otimes C \ar[d]^{\Id_A\otimes\Delta_R}\\
    C\otimes B \ar[r]_{\Delta_L\otimes\Id_B}& A\otimes C\otimes B.
  }
  \end{gathered}
  \]
  Furthermore, \(\Delta_L\) and~\(\Delta_R\) are associated to the same bicharacter \(V\in\U\Mult(\hat{C}\otimes A)\) if and only if the following diagram commutes:
  \begin{equation}
    \label{eq:comp_left_right_homomorphism}
    \begin{gathered}
      \xymatrix@C+2em{
        C \ar[r]^{\Comult_C} \ar[d]_{\Comult_C} &
        C\otimes C \ar[d]^{\Id_C\otimes\Delta_L}\\
        C\otimes C \ar[r]_{\Delta_R\otimes\Id_C}& C\otimes A\otimes C.
      }
    \end{gathered}
  \end{equation}
\end{lemma}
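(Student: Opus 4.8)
The plan is to reduce both assertions to identities of unitary multipliers, exploiting that the slices \((\omega\otimes\Id_C)(\multunit)\) by linear functionals~\(\omega\) on~\(\hat{C}\) span a dense subspace of~\(C\). Consequently, a diagram of morphisms emanating from~\(C\) commutes if and only if its two legs agree after composing with \(\Id_{\hat{C}}\otimes(\cdot)\) and evaluating on~\(\multunit\). Write \(V\in\U\Mult(\hat{C}\otimes A)\) for the bicharacter attached to~\(\Delta_L\) by~\eqref{eq:def_V_via_left_homomorphism} and \(W\in\U\Mult(\hat{C}\otimes B)\) for the one attached to~\(\Delta_R\) by~\eqref{eq:def_V_via_right_homomorphism}.

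For the first diagram I evaluate both composites on~\(\multunit\) inside \(\U\Mult(\hat{C}\otimes A\otimes C\otimes B)\). Along the top and right edges, \(\Id_{\hat{C}}\otimes\Delta_L\) turns~\(\multunit\) into \(V_{12}\multunit_{13}\) by~\eqref{eq:def_V_via_left_homomorphism}, and then \(\Id_{\hat{C}}\otimes\Id_A\otimes\Delta_R\) acts only on the \(C\)\nb-leg, leaving \(V_{12}\) fixed and, by~\eqref{eq:def_V_via_right_homomorphism}, sending \(\multunit_{13}\) to \(\multunit_{13}W_{14}\); the result is \(V_{12}\multunit_{13}W_{14}\). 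Along the left and bottom edges, \(\Id_{\hat{C}}\otimes\Delta_R\) gives \(\multunit_{12}W_{13}\), and then \(\Id_{\hat{C}}\otimes\Delta_L\otimes\Id_B\) expands the \(C\)\nb-leg, carrying \(\multunit_{12}\) to \(V_{12}\multunit_{13}\) and \(W_{13}\) to \(W_{14}\), again yielding \(V_{12}\multunit_{13}W_{14}\). The two composites therefore agree on all slices of~\(\multunit\), so the first diagram commutes.

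For the equivalence I specialise to \(B=A\) and evaluate both legs of~\eqref{eq:comp_left_right_homomorphism} on~\(\multunit\) inside \(\U\Mult(\hat{C}\otimes C\otimes A\otimes C)\), first rewriting \((\Id_{\hat{C}}\otimes\Comult_C)(\multunit)=\multunit_{12}\multunit_{13}\) by~\eqref{eq:Delta_W}. Applying \(\Id_{\hat{C}}\otimes\Delta_R\otimes\Id_C\) together with~\eqref{eq:def_V_via_right_homomorphism} produces \(\multunit_{12}W_{13}\multunit_{14}\), while applying \(\Id_{\hat{C}}\otimes\Id_C\otimes\Delta_L\) together with~\eqref{eq:def_V_via_left_homomorphism} produces \(\multunit_{12}V_{13}\multunit_{14}\). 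Multiplying on the left by~\(\multunit_{12}^*\) and on the right by~\(\multunit_{14}^*\) shows that these coincide if and only if \(V_{13}=W_{13}\), that is, \(V=W\). Since slices of~\(\multunit\) are dense, the diagram~\eqref{eq:comp_left_right_homomorphism} commutes exactly when \(\Delta_L\) and~\(\Delta_R\) arise from one and the same bicharacter.

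The argument is entirely a matter of tracking tensor legs, and there is no substantive obstacle. The only point requiring attention is the bookkeeping when an intermediate \(C\)\nb-factor is split by~\(\Delta_L\) (into \(A\otimes C\)) or by~\(\Delta_R\) (into \(C\otimes A\)), which shifts the positions of the remaining legs, together with the consistent appeal to the density of the slices of~\(\multunit\) and to the defining relations~\eqref{eq:def_V_via_left_homomorphism} and~\eqref{eq:def_V_via_right_homomorphism}.
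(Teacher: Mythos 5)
Your proposal is correct and follows essentially the same route as the paper: both reduce each diagram to an identity of unitary multipliers via the density of slices of \(\multunit\), obtaining \(V_{12}\multunit_{13}W_{14}\) for both legs of the first diagram and \(\multunit_{12}W_{13}\multunit_{14}=\multunit_{12}V_{13}\multunit_{14}\) for the second, using \eqref{eq:def_V_via_left_homomorphism} and \eqref{eq:def_V_via_right_homomorphism}. The leg bookkeeping and the conclusion \(V=W\) match the paper's argument exactly.
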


\begin{proof}
  Since slices of~\(\multunit^C\) span a dense subspace of~\(C\), \eqref{eq:lef_right_homomorphism} commutes if and only if
  \begin{equation}
    \label{eq:left_right_compatible}
    (\Id_{\hat{C}}\otimes\Id_A\otimes\Delta_R)
    (\Id_{\hat{C}}\otimes\Delta_L) (\multunit)
    = (\Id_{\hat{C}}\otimes\Delta_L\otimes\Id_B)
    (\Id_{\hat{C}}\otimes\Delta_R) (\multunit).
  \end{equation}
  Let \(V\) and~\(\tilde{V}\) be the bicharacters associated to \(\Delta_L\) and~\(\Delta_R\), respectively.  Equations \eqref{eq:def_V_via_right_homomorphism} and~\eqref{eq:def_V_via_left_homomorphism} imply that both sides of~\eqref{eq:left_right_compatible} are equal to~\(V_{12}\multunit_{13}\tilde{V}_{14}\).

  The diagram~\eqref{eq:comp_left_right_homomorphism} commutes if and only if
  \begin{equation}
    \label{eq:comp_left_right_compatible}
    (\Id_{\hat{C}}\otimes\Id_C\otimes\Delta_L)(\Id_{\hat{C}}\otimes\Comult_C)(\multunit)
    = (\Id_{\hat{C}}\otimes\Delta_R\otimes\Id_C)(\Id_{\hat{C}}\otimes\Comult_C)(\multunit)
  \end{equation}
  because slices of~\(\multunit\) span a dense subspace of~\(C\).  Using \eqref{eq:Delta_W}, \eqref{eq:def_V_via_left_homomorphism} and~\eqref{eq:def_V_via_right_homomorphism}, we may rewrite~\eqref{eq:comp_left_right_compatible} as
  \(\multunit_{12}\tilde{V}_{13}\multunit_{14}
  = \multunit_{12}V_{13}\multunit_{14}\).
  Thus~\eqref{eq:comp_left_right_compatible} is equivalent to \(V=\tilde{V}\).
\end{proof}

\begin{lemma}
  \label{lemm:continuity_of_morphisms}
  Right or left quantum group homomorphisms are injective and continuous coactions.
\end{lemma}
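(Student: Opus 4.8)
The plan is to treat right quantum group homomorphisms in detail and then deduce the left case by duality. For a right homomorphism $\Delta_R\colon C\to C\otimes A$ the three points to check are that $\Delta_R$ is injective, that it satisfies the comodule identity, and that it is continuous in the sense of the Podleś density condition. The comodule identity $(\Id_C\otimes\Comult_A)\Delta_R=(\Delta_R\otimes\Id_A)\Delta_R$ is exactly the second diagram in~\eqref{eq:intro_right_homomorphism} and therefore holds by the very definition of a right quantum group homomorphism. Injectivity is immediate from the formula~\eqref{eq:Delta_R_via_V}: since $\Delta_R(x)=\mathbb{V}(x\otimes1)\mathbb{V}^{*}$ with $\mathbb{V}$ unitary, $\Delta_R(x)=0$ forces $x\otimes1=0$ and hence $x=0$. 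For a left homomorphism, \eqref{eq:Delta_L_via_V} writes $\Delta_L(x)$ as $(R_A\otimes R_C)$ applied to a conjugation of $1\otimes R_C(x)$ by the unitary $\hat{\mathbb{V}}$; as $R_C$ is a bijective anti-automorphism of $\Mult(C)$ and $R_A\otimes R_C$ is bijective, the same argument yields injectivity of $\Delta_L$.

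The substantial point is continuity, that is, $\overline{\Delta_R(C)(1\otimes A)}=C\otimes A$. First I would rewrite this span using~\eqref{eq:def_V_via_right_homomorphism}. Because the slices $(\omega\otimes\Id_C)(\multunit)$ with $\omega\in\hat{C}^{*}$ are dense in~$C$ and $\Delta_R$ is a morphism, the relation $(\Id_{\hat{C}}\otimes\Delta_R)(\multunit)=\multunit_{12}V_{13}$ gives $\Delta_R\bigl((\omega\otimes\Id)\multunit\bigr)(1\otimes a)=(\omega\otimes\Id_C\otimes\Id_A)\bigl(\multunit_{12}V_{13}(1\otimes1\otimes a)\bigr)$ for $a\in A$. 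Thus $\overline{\Delta_R(C)(1\otimes A)}$ equals the closed linear span of the $\hat{C}$-slices of $\multunit_{12}V_{13}(1\otimes1\otimes A)$, and the task is reduced to showing that these fill up all of $C\otimes A$.

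The hard part is exactly this density, which is where the modularity of the multiplicative unitaries genuinely enters. The idea is to exploit cancellation in the two tensor legs separately: density of the $C$-slices of~$\multunit$ (from~\cite{Soltan-Woronowicz:Multiplicative_unitaries}) should account for the first leg, while the second leg is controlled by the bicharacter relation~\eqref{eq:linking_unitary_comult_A}, which ties $V$ to~$\Comult_A$, together with the quantum-group cancellation property $\overline{\Comult_A(A)(1\otimes A)}=A\otimes A$ valid for quantum groups arising from modular multiplicative unitaries. Concretely I would apply $\Id_C\otimes\Comult_A$ to $\Delta_R(C)(1\otimes A)$, use the comodule identity to rewrite the result as $(\Delta_R\otimes\Id_A)\Delta_R(C)\,(1\otimes\Comult_A(A))$, and then feed in the $A$-cancellation to descend back to $C\otimes A$; the containment $\overline{\Delta_R(C)(1\otimes A)}\subseteq C\otimes A$ falls out of the same slice computation. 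I expect this density bootstrap, rather than the surrounding algebra, to be the main obstacle. Finally, for a left quantum group homomorphism the continuity condition $\overline{(A\otimes1)\Delta_L(C)}=A\otimes C$ transfers from the right-hand case: by Lemma~\ref{lem:comp_left_right_homomorphism} and~\eqref{eq:Delta_L_via_V} the map $\Delta_L$ is obtained from the right homomorphism attached to the same bicharacter by applying the unitary antipodes $R_A,R_C$ and the coordinate flip, all of which are (anti-)isomorphisms carrying the relevant dense subspaces onto one another, so the Podleś condition passes over.
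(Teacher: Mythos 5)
Your treatment of injectivity (via \eqref{eq:Delta_R_via_V} and \eqref{eq:Delta_L_via_V}) and your reduction of the left case to the right case through $(R_A\otimes R_C)\circ\flip$ are both correct and consistent with the paper, which handles injectivity the same way and dismisses the left case as ``analogous.'' Your reduction of continuity to the density of the $\hat{C}$\nb-slices of $\multunit_{12}V_{13}(1\otimes 1\otimes A)$ is also exactly where the paper's proof starts.

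However, the step you yourself flag as the main obstacle is a genuine gap, and the bootstrap you propose does not close it. Applying $\Id_C\otimes\Comult_A$ to $\Delta_R(C)(1\otimes A)$ and invoking $\overline{\Comult_A(A)(1\otimes A)}=A\otimes A$ produces statements about $(\Delta_R\otimes\Id_A)\bigl(\Delta_R(C)\bigr)(1\otimes A\otimes A)$, and to ``descend back'' to a statement about $\overline{\Delta_R(C)(1\otimes A)}$ you would need to slice the extra leg and already know that $\Delta_R(C)(1\otimes A)$ is linearly dense --- i.e.\ the argument is circular; moreover the two tensor legs in $\multunit_{12}V_{13}$ cannot be treated independently as you suggest. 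The idea that is missing is an \emph{absorption} trick in the first (i.e.\ $\hat{C}$-) leg rather than any cancellation in the $A$\nb-leg: the paper replaces the functional $\omega$ by $\hat{c}\mu$ with $\hat{c}\in\hat{C}$, $\mu\in\hat{C}'$, so that the slice becomes $(\mu\otimes\Id_C\otimes\Id_A)\bigl(\multunit^C_{12}V_{13}(\hat{c}\otimes 1\otimes a)\bigr)$; since $V$ is a unitary \emph{multiplier} of $\hat{C}\otimes A$, the elements $V_{13}(\hat{c}\otimes 1\otimes a)$ have the same closed linear span as $\hat{c}\otimes 1\otimes a$, so $V_{13}$ may simply be deleted, leaving $\bigl((\hat{c}\mu\otimes\Id_C)\multunit^C\bigr)\otimes a$, which is manifestly dense in $C\otimes A$. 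No appeal to the cancellation property of $\Comult_A$ or to modularity beyond the construction of $V$ as a multiplier is needed. As written, your proof of continuity is incomplete.
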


\begin{proof}
  Equations \eqref{eq:Delta_R_via_V} and~\eqref{eq:Delta_L_via_V} show that left and right quantum group homomorphisms are injective.  We only prove continuity for right quantum group homomorphisms, the left case is analogous.  Let \(\Delta_R\colon C\to C\otimes A\) be a right quantum group homomorphism with associated bicharacter \(V\in\U\Mult(\hat{C}\otimes A)\).  We must show that the linear span of \(\Delta_R(C) (1\otimes A)\) is dense in \(C\otimes A\).  We may replace~\(C\) by the dense subspace of slices \((\hat{c}\mu\otimes\Id_C)\multunit^C\) for \(\mu\in\hat{C}\) and \(\hat{c}\in\hat{C}\), where \(\hat{c}\mu\in\hat{C}'\) is defined by \(\hat{c}\mu (x) \defeq \mu(x\hat{c})\) for \(\hat{c}\in\hat{C}\), \(\mu\in \hat{C}'\), and \(x\in \hat{C}\).  We have
  \[
  \bigl((\hat{c}\mu\otimes\Id_C\otimes\Id_A)(\Id_{\hat{C}}\otimes\Delta_{R})\multunit^C\bigr)(1\otimes a)
  = (\mu\otimes\Id_C\otimes\Id_A)\bigl(\multunit^C_{12} V_{13}(\hat{c}\otimes 1\otimes a)\bigr).
  \]
  Here \(V_{13}(\hat{c}\otimes 1\otimes a)\) ranges over a linearly dense subset of \(\hat{C}\otimes 1\otimes A\).  Hence we do not change the closed linear span if we replace this expression by \(\hat{c}\otimes 1\otimes a\).  This leads to
  \[
  (\mu\otimes\Id_C\otimes\Id_A)(\multunit^C_{12} \cdot (\hat{c}\otimes 1\otimes a)) =
  \bigl((\hat{c}\mu\otimes\Id_C)\multunit^C\bigr) \otimes a,
  \]
  and these elements span a dense subspace of \(C\otimes A\) as asserted.
\end{proof}

\section{Functors between coaction categories}
\label{sec:functors}

Let~\(\Cstcat\) denote the category of \(\Cst\)\nb-algebras with morphisms (non-degenerate \Star{}homomorphisms \(A\to\Mult(B)\)) as arrows.  For a locally compact quantum group \((A,\Comult_A)\), let \(\Cstcat(A)\) or \(\Cstcat(A,\Comult_A)\) denote the category of \(\Cst\)\nb-algebras with a continuous, injective \(A\)\nb-coaction, together with \(A\)\nb-equivariant morphisms as arrows.  Lemma~\ref{lemm:continuity_of_morphisms} shows that left and right quantum group homomorphisms provide objects of our category.

Let \(\Forget\colon \Cstcat(C)\to\Cstcat\) be the functor that forgets the \(C\)\nb-coaction.  We now describe quantum group homomorphisms using functors \(F\colon \Cstcat(C)\to\Cstcat(A)\) with \(\Forget\circ F=\Forget\).  In particular, we show that a right quantum group homomorphism induces such a functor.  The results in this section answer a question posed to us by Debashish Goswami.

\begin{theorem}
  \label{the:homomorphism_functor}
  Let \((C,\Comult_C)\) and \((A,\Comult_A)\) be locally compact quantum groups.  Functors \(F\colon \Cstcat(C)\to\Cstcat(A)\) with \(\Forget\circ F=\Forget\) are in natural bijection with right quantum group homomorphisms from~\(C\) to~\(A\).

  More precisely, let \(\gamma\colon D\to D\otimes C\) be a continuous coaction of \((C,\Comult_C)\) on a \(\Cst\)\nb-algebra~\(D\) and let \(\Delta_R\colon C\to C\otimes A\) be a right quantum group homomorphism.  Then there is a unique continuous coaction~\(\alpha\) of \((A,\Comult_A)\) on~\(D\) such that the following diagram commutes:
  \begin{equation}
    \label{eq:induced_coaction}
    \begin{gathered}
      \xymatrix@C+2em{
        D \ar[r]^{\gamma} \ar[d]_{\alpha}&
        D\otimes C \ar[d]^{\Id_D \otimes\Delta_R}\\
        D\otimes A \ar[r]_{\gamma\otimes\Id_A}&
        D\otimes C\otimes A.
      }
    \end{gathered}
  \end{equation}
  If a morphism \(D\to D'\) between two \(\Cst\)\nb-algebras with continuous \(C\)\nb-coactions is \(C\)\nb-equivariant, then it is \(A\)\nb-equivariant as well, so that this construction is a functor \(F\colon \Cstcat(C)\to\Cstcat(A)\) with \(\Forget\circ F=\Forget\).  Conversely, any such functor is of this form for some right quantum group homomorphism~\(\Delta_R\).
\end{theorem}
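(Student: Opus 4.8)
The plan is to prove the two directions separately, using at every turn that a continuous coaction is injective (Lemma~\ref{lemm:continuity_of_morphisms}), so that \(\gamma\otimes\Id_A\) and its analogues are injective \Star{}homomorphisms.

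\emph{Uniqueness and existence of \(\alpha\).} Since \(\gamma\otimes\Id_A\) is injective, the square~\eqref{eq:induced_coaction} determines \(\alpha\) at once, so only existence requires work. I would first observe that \(W_d\defeq(\Id_D\otimes\Delta_R)\gamma(d)\in\Mult(D\otimes C\otimes A)\) satisfies the invariance identity
\[
(\gamma\otimes\Id_C\otimes\Id_A)(W_d)=(\Id_D\otimes\Comult_C\otimes\Id_A)(W_d)
\qquad\text{in }\Mult(D\otimes C\otimes C\otimes A),
\]
which follows by combining the first diagram in~\eqref{eq:intro_right_homomorphism} (to pull \(\Comult_C\) through \(\Delta_R\)) with the coaction identity \((\gamma\otimes\Id_C)\gamma=(\Id_D\otimes\Comult_C)\gamma\) for~\(\gamma\). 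The crucial point is then that, for a continuous coaction, the range of the injective morphism \(\gamma\otimes\Id_A\) consists precisely of the multipliers satisfying this invariance; this Podleś\nb-type statement, in which continuity of~\(\gamma\) is indispensable, is the main obstacle of the whole argument. Granting it, there is a unique \(\alpha(d)\in\Mult(D\otimes A)\) with \((\gamma\otimes\Id_A)(\alpha(d))=W_d\), and \(\alpha\) is a morphism because \(\gamma\otimes\Id_A\) is an isomorphism onto its closed range.

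\emph{\(\alpha\) is a continuous coaction, and functoriality.} Coassociativity is checked after applying the injective map \(\gamma\otimes\Id_A\otimes\Id_A\): using~\eqref{eq:induced_coaction} twice together with the second diagram in~\eqref{eq:intro_right_homomorphism} (the \(A\)\nb-comodule condition on~\(\Delta_R\)), both \((\alpha\otimes\Id_A)\alpha\) and \((\Id_D\otimes\Comult_A)\alpha\) are carried to \((\Id_D\otimes(\Delta_R\otimes\Id_A)\Delta_R)\gamma\), and injectivity gives \((\alpha\otimes\Id_A)\alpha=(\Id_D\otimes\Comult_A)\alpha\). Continuity of~\(\alpha\), i.e.\ density of \(\alpha(D)(1\otimes A)\) in \(D\otimes A\), I would deduce from the density of \(\gamma(D)(1\otimes C)\) and of \(\Delta_R(C)(1\otimes A)\) (Lemma~\ref{lemm:continuity_of_morphisms}) by a routine chase transported through \(\gamma\otimes\Id_A\). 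For functoriality, if \(f\colon D\to D'\) is \(C\)\nb-equivariant, then applying the injective \(\gamma'\otimes\Id_A\) to \(\alpha'\circ f\) and to \((f\otimes\Id_A)\circ\alpha\) and invoking~\eqref{eq:induced_coaction} for \(\gamma\) and~\(\gamma'\) together with \(C\)\nb-equivariance of~\(f\) shows the two agree; hence \(f\) is \(A\)\nb-equivariant, and \(F\) is a functor with \(\Forget\circ F=\Forget\), since it leaves underlying algebras and morphisms untouched.

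\emph{Every such functor arises from a \(\Delta_R\).} Given \(F\), I would let \(\Delta_R\) be the \(A\)\nb-coaction that \(F\) assigns to the regular object \((C,\Comult_C)\in\Cstcat(C)\); as \(\Forget\circ F=\Forget\) this is again a morphism \(C\to C\otimes A\), and the second diagram in~\eqref{eq:intro_right_homomorphism} holds because it is a coaction. The first diagram is precisely the statement that the \(C\)\nb-equivariant map \(\Comult_C\colon(C,\Comult_C)\to(C\otimes C,\Id_C\otimes\Comult_C)\) (equivariant by coassociativity) is sent by~\(F\) to an \(A\)\nb-equivariant map; this needs the identification \(F(C\otimes C,\Id_C\otimes\Comult_C)=(C\otimes C,\Id_C\otimes\Delta_R)\), which I would obtain from the two equivariant maps \(c\mapsto 1_C\otimes c\) and \(c\mapsto c\otimes 1_C\) together with the fact that \(F\) preserves the trivial coaction (a naturality argument using the representations of the algebras). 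Finally, for an arbitrary object \((D,\gamma)\) the \(C\)\nb-equivariant map \(\gamma\colon(D,\gamma)\to(D\otimes C,\Id_D\otimes\Comult_C)\) forces the \(A\)\nb-coaction that \(F\) puts on~\(D\) to satisfy~\eqref{eq:induced_coaction}, so by the uniqueness established above it coincides with the induced \(\alpha\); thus \(F\) equals the functor built from~\(\Delta_R\). Apart from the range-equals-invariants input flagged above, everything here is formal bookkeeping with injective morphisms and naturality.
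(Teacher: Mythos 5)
There is a genuine gap at the central step of your existence argument, and you have flagged it yourself. Everything hinges on the claim that the range of \(\gamma\otimes\Id_A\) consists \emph{exactly} of the multipliers \(W\) of \(D\otimes C\otimes A\) with \((\gamma\otimes\Id_C\otimes\Id_A)(W)=(\Id_D\otimes\Comult_C\otimes\Id_A)(W)\).  The easy inclusion (range \(\subseteq\) invariants) follows from coassociativity of~\(\gamma\), and your verification that \(W_d=(\Id_D\otimes\Delta_R)\gamma(d)\) is invariant is correct; but the inclusion you actually need (invariants \(\subseteq\) range) is a biduality-type theorem that is established nowhere in this paper and is not available off the shelf in the present framework of modular multiplicative unitaries without Haar weights --- for reduced \(\Cst\)\nb-coactions it is delicate even when Haar weights exist.  ``Granting it'' concedes the hardest point.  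The paper avoids this principle entirely: by Lemma~\ref{lem:comp_left_right_homomorphism}, the left quantum group homomorphism \(\Delta_L\) attached to the \emph{same} bicharacter satisfies \((\Id_C\otimes\Delta_L)\Comult_C=(\Delta_R\otimes\Id_C)\Comult_C\), whence
\[
(\Id_D\otimes\Delta_R\otimes\Id_C)(\gamma\otimes\Id_C)\gamma
= (\gamma\otimes\Id_{A\otimes C})(\Id_D\otimes\Delta_L)\gamma,
\]
so the left-hand side \emph{visibly} lands in the range of \(\gamma\otimes\Id_{A\otimes C}\); continuity of~\(\gamma\) (density of \(\gamma(D)(1_D\otimes C)\) in \(D\otimes C\)) then removes the extra copy of~\(\gamma\) and yields \((\Id_D\otimes\Delta_R)\gamma(D)\subseteq(\gamma\otimes\Id_A)(D\otimes A)\) directly.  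You should replace your Podleś-type input by this argument (or prove that input, which would be a substantial new result in its own right).

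The converse direction also has an unproved step.  To force~\eqref{eq:induced_coaction} from equivariance of \(\gamma\colon(D,\gamma)\to(D\otimes C,\Id_D\otimes\Comult_C)\) you need \(F(D\otimes C,\Id_D\otimes\Comult_C)=(D\otimes C,\Id_D\otimes\Delta_R)\), and your derivation of this from the maps \(d\mapsto d\otimes 1_C\) and \(c\mapsto 1_D\otimes c\) requires knowing that \(F\) preserves the trivial coaction on an arbitrary~\(D\).  You assert this via an unspecified ``naturality argument'' but do not prove it, and it is not obvious: morphisms into or out of \((D,d\mapsto d\otimes 1_C)\) in this category do not pin down the image coaction without further work.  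The paper's route is concrete: it first shows \(F(\Comult_C\oplus\Comult_C)=\Delta_R\oplus\Delta_R\) using the coordinate projections, then \(F(\Id_{\Comp(\Hils)}\otimes\Comult_C)=\Id_{\Comp(\Hils)}\otimes\Delta_R\) using the equivariant maps \(C\oplus C\to\Comp(\Hils)\otimes C\), \((a,b)\mapsto aP+b(1-P)\), for projections~\(P\), and finally embeds a general \((D,\gamma)\) equivariantly into \(\Comp(\Hils)\otimes C\) via \((\pi\otimes\Id_C)\gamma\) for a faithful representation~\(\pi\), never invoking trivial coactions.  The remaining parts of your proposal --- uniqueness, coassociativity, continuity of~\(\alpha\), functoriality, and the final identification of~\(F\) --- do match the paper's proof.
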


\begin{proof}
  A map~\(\alpha\) making~\eqref{eq:induced_coaction} commute is unique if it exists because \(\gamma\otimes\Id_A\) is injective.  Existence means \((\Id_D\otimes\Delta_R)\gamma(D)\subseteq (\gamma\otimes\Id_A)(D\otimes A)\).  Let \(\Delta_L\colon C\to A\otimes C\) be the left quantum group homomorphism satisfying~\eqref{eq:comp_left_right_homomorphism}.  We compute
  \begin{align*}
    (\Id_D\otimes\Delta_R\otimes\Id_C)(\gamma\otimes\Id_C)\gamma
    &= (\Id_D\otimes\Delta_R\otimes\Id_C)(\Id_D\otimes\Comult_C)\gamma
    \\&= (\Id_D\otimes\Id_C\otimes\Delta_L)(\Id_D\otimes\Comult_C)\gamma
    \\&= (\Id_D\otimes\Id_C\otimes\Delta_L)(\gamma\otimes\Id_C)\gamma
    \\&= (\gamma\otimes\Delta_L)\gamma =
    (\gamma\otimes\Id_{A\otimes C})(\Id_D\otimes\Delta_L)\gamma,
  \end{align*}
  where the first and third equality use that~\(\gamma\) is coassociative, the second one uses~\eqref{eq:comp_left_right_homomorphism}, and the fourth one is trivial.

  Thus \((\Id_D\otimes\Delta_R\otimes\Id_C)(\gamma\otimes\Id_C)\) maps \(\gamma(D)\) into \((\gamma\otimes\Id_{A\otimes C})(D\otimes A\otimes C)\).  Since it also maps \(1_D\otimes C\) into \((\gamma\otimes\Id_{A\otimes C})(D\otimes A\otimes C)\) and \(\gamma(D)\cdot (1_D\otimes C)\) is dense in \(D\otimes C\) by the continuity of~\(\gamma\), \((\Id_D\otimes\Delta_R\otimes\Id_C)(\gamma\otimes\Id_C)\) maps \(D\otimes C\) into \((\gamma\otimes\Id_{A\otimes C})(D\otimes A\otimes C)\).  Thus \((\Id_D\otimes\Delta_R)\gamma(D)\subseteq (\gamma\otimes\Id_A)(D\otimes A)\) as desired.

  The second diagram in~\eqref{eq:intro_right_homomorphism} and several applications of~\eqref{eq:induced_coaction} imply
  \[
  (\gamma\otimes\Id_{A\otimes A})\circ (\alpha\otimes\Id_A)\circ \alpha =
  (\gamma\otimes\Id_{A\otimes A})\circ (\Id_D\otimes\Comult_A)\circ \alpha.
  \]
  Since \(\gamma\otimes\Id_{A\otimes A}\) is injective, \(\alpha\) satisfies \((\alpha\otimes\Id_A)\circ \alpha = (\Id_D\otimes\Comult_A)\circ \alpha\).  The map~\(\alpha\) is injective as well.  We check that~\(\alpha\) is continuous.  Since~\(\Delta_L\) and~\(\gamma\) are continuous,
  \begin{multline*}
    (\Id_D\otimes\Delta_L)\gamma(D)\cdot (1_D\otimes A\otimes C)\\
    \begin{aligned}
      &= (\Id_D\otimes\Delta_L)\gamma(D)\cdot (1_D\otimes \Delta_L(C))\cdot (1_D\otimes A\otimes 1_C)
      \\&= (\Id_D\otimes\Delta_L)(\gamma(D)\cdot 1_D\otimes C)
      \cdot (1_D\otimes A\otimes 1_C)
      \\&= (\Id_D\otimes\Delta_L)(D\otimes C) \cdot (1_D\otimes A\otimes 1_C)
      \\&= (D\otimes 1_{A\otimes C})\cdot (1_D\otimes\Delta_L(C))\cdot (1_D\otimes A\otimes 1_C)
      = D\otimes A\otimes C.
    \end{aligned}
  \end{multline*}
  Hence
  \begin{multline*}
    (\gamma\otimes\Id_{A\otimes C})(\alpha(D)\otimes C\cdot (1_D\otimes A\otimes C))
    \\=
    (\gamma\otimes\Id_{A\otimes C})( (\Id_D\otimes\Delta_L)\gamma(D)\cdot (1\otimes A\otimes C))
    =
    (\gamma\otimes\Id_{A\otimes C})(D\otimes A\otimes C).
  \end{multline*}
  Since \(\gamma\otimes\Id_{A\otimes C}\) is injective, we get \((\alpha(D)\otimes C)\cdot (1_D\otimes A\otimes 1_C) = D\otimes A\otimes C\).  This implies \(\alpha(D)\cdot (1_D\otimes A)= D\otimes A\) and hence that~\(\alpha\) is continuous.

  It is easy to see that a \(C\)\nb-equivariant map \(D\to D'\) remains \(A\)\nb-equivariant for the induced \(A\)\nb-coactions.  Thus we get a functor \(F\colon \Cstcat(C)\to\Cstcat(A)\) with \(\Forget\circ F=\Forget\) from a right quantum group homomorphism.

  Now let, conversely, \(F\colon \Cstcat(C)\to\Cstcat(A)\) be a functor with \(\Forget\circ F=\Forget\), that is, \(F\) maps a continuous \(C\)\nb-coaction \(\gamma\colon D\to D\otimes C\) on some \(\Cst\)\nb-algebra in a natural way to a continuous \(A\)\nb-coaction \(F(\gamma)\colon D\to D\otimes A\) on the same \(\Cst\)\nb-algebra.  We claim that~\(F\) must come from some right quantum group homomorphism \(\Delta_R\colon C\to C\otimes A\) by the above construction.

  When we apply~\(F\) to the coaction \(\Comult_C\colon C\to C\otimes C\), we get an \(A\)\nb-coaction \(\Delta_R\colon C\to C\otimes A\).  Being a coaction, it makes the second diagram in~\eqref{eq:intro_right_homomorphism} commute.  We will see later that the first diagram in~\eqref{eq:intro_right_homomorphism} also commutes.  First we use naturality to show that~\eqref{eq:induced_coaction} with \(\alpha = F(\gamma)\) commutes for any coaction of~\(C\), so that~\(\Delta_R\) determines the functor~\(F\).

  To begin with, we consider the coaction \(\Comult_C\oplus\Comult_C\colon C\oplus C\to (C\oplus C)\otimes C\).  Since the coordinate projections \(\pi_1,\pi_2\colon C\oplus C\to C\) are \(C\)\nb-equivariant, they are \(A\)\nb-equivariant with respect to \(F(\Comult_C\oplus\Comult_C)\) and \(F(\Comult_C)=\Delta_R\).  This already implies that \(F(\Comult_C\oplus\Comult_C)=\Delta_R\oplus\Delta_R\).

  Next we consider the coaction \(\Id_{\Comp(\Hils)} \otimes \Comult_C\) on \(\Comp(\Hils)\otimes C\).  For any projection \(P\in\Comp(\Hils)\), we get a \(C\)\nb-equivariant morphism \(C\oplus C\to \Comp(\Hils)\otimes C\), \((a,b)\mapsto aP + b(1-P)\).  Since we already know the \(A\)\nb-coaction \(F(\Delta_C\oplus \Delta_C)\), the induced \(A\)\nb-coaction on \(\Comp(\Hils)\otimes C\) maps \(P\otimes a\mapsto P\otimes \Delta_R(a)\).  Since this holds for all projections~\(P\) and since these projections generate \(\Comp(\Hils)\), we get \(F(\Id_{\Comp(\Hils)} \otimes \Comult_C) = \Id_{\Comp(\Hils)} \otimes \Delta_R\).

  Now consider a general coaction \(\gamma\colon D\to D\otimes C\).  Then~\(\gamma\) is \(C\)\nb-equivariant with respect to the coaction \(\Id_D\otimes\Comult_C\) on \(D\otimes C\).  Let \(\pi\colon D\to\Comp(\Hils)\) be a morphism coming from a faithful representation of~\(D\) on some Hilbert space~\(\Hils\).  The injective, \(C\)\nb-equivariant morphism \((\pi\otimes\Id_C) \circ\gamma\colon D\to \Comp(\Hils)\otimes C\) remains \(A\)\nb-equivariant with respect to the coactions \(F(\gamma)\) on~\(C\) and \(F(\Id_{\Comp(\Hils)}\otimes\Comult_C) = \Id_{\Comp(\Hils)}\otimes\Delta_R\) on \(\Comp(\Hils)\otimes C\).  This means that~\eqref{eq:induced_coaction} commutes with \(\alpha\defeq F(\gamma)\).  Finally, specialising~\eqref{eq:induced_coaction} to the coaction~\(\Delta_C\) on~\(C\) shows that the first diagram in~\eqref{eq:intro_right_homomorphism} commutes.  Thus~\(\Delta_R\) is a right quantum group homomorphism that generates~\(F\).  The construction also shows that~\(\Delta_R\) is unique.
\end{proof}

\begin{proposition}
  \label{pro:compose_right}
  Let \(\alpha\colon C\to C\otimes A\) and \(\beta\colon A\to A\otimes B\) be two right quantum group homomorphisms and let \(F_\alpha\colon \Cstcat(C)\to\Cstcat(A)\) and \(F_\beta\colon \Cstcat(A)\to\Cstcat(B)\) be the associated functors.  Then \(F_\beta\circ F_\alpha = F_\gamma\), where \(\gamma\colon C\to C\otimes B\) is the unique right quantum group homomorphism \(\gamma\colon C\to C\otimes B\) that makes the following diagram commute:
  \begin{equation}
    \label{eq:diag_for_comp_morph}
    \begin{gathered}
      \xymatrix@C+2em{
        C \ar[r]^{\alpha} \ar[d]_{\gamma}&
        C\otimes A \ar[d]^{\Id_C \otimes\beta}\\
        C\otimes B \ar[r]_{\alpha\otimes\Id_B}&
        C\otimes A\otimes B.
      }
    \end{gathered}
  \end{equation}
  Furthermore, the bicharacter associated to~\(\gamma\) is the composition of the bicharacters associated to \(\beta\) and~\(\alpha\).
\end{proposition}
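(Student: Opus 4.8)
The plan is to reduce both claims to Theorems~\ref{the:homomorphism_functor} and~\ref{the:linking_right_homomorphism}, so that everything comes down to evaluating the relevant right quantum group homomorphisms on the canonical coaction~\(\Comult_C\) and to the concrete formula~\eqref{eq:Delta_R_via_V}.

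First I would settle the functor identity. The composite \(F_\beta\circ F_\alpha\colon \Cstcat(C)\to\Cstcat(B)\) is again a functor with \(\Forget\circ(F_\beta\circ F_\alpha)=\Forget\), so by Theorem~\ref{the:homomorphism_functor} it equals \(F_\gamma\) for the unique right quantum group homomorphism \(\gamma\defeq (F_\beta\circ F_\alpha)(\Comult_C)\). It then remains to identify this~\(\gamma\). Applying~\(F_\alpha\) to~\(\Comult_C\) yields the \(A\)-coaction on~\(C\) characterised by~\eqref{eq:induced_coaction}, namely \((\Comult_C\otimes\Id_A)F_\alpha(\Comult_C)=(\Id_C\otimes\alpha)\Comult_C\); since~\(\alpha\) is a right quantum group homomorphism, the first diagram in~\eqref{eq:intro_right_homomorphism} gives the identical right-hand side with~\(\alpha\) in place of \(F_\alpha(\Comult_C)\), and injectivity of \(\Comult_C\otimes\Id_A\) forces \(F_\alpha(\Comult_C)=\alpha\). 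Applying~\(F_\beta\) to the \(A\)-coaction~\(\alpha\) on~\(C\) then produces, again by~\eqref{eq:induced_coaction}, precisely the coaction that makes~\eqref{eq:diag_for_comp_morph} commute. Thus \(\gamma=F_\beta(\alpha)\) is the unique right quantum group homomorphism satisfying~\eqref{eq:diag_for_comp_morph}, and \(F_\beta\circ F_\alpha=F_\gamma\).

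Next I would prove the statement about bicharacters. Let \(\mathbb{V}^{C\to A}\) and \(\mathbb{V}^{A\to B}\) be the concrete bicharacters of \(\alpha\) and~\(\beta\), and put \(W\defeq \Linkunit{A}{B}*\Linkunit{C}{A}\); this is a bicharacter from~\(C\) to~\(B\) by Lemma~\ref{lem:composition_exists}. Let \(\gamma'\colon C\to C\otimes B\) be the right quantum group homomorphism associated to~\(W\) via Theorem~\ref{the:linking_right_homomorphism}, so that \(\gamma'(x)=\mathbb{W}(x\otimes1)\mathbb{W}^*\) by~\eqref{eq:Delta_R_via_V}. My aim is to show that~\(\gamma'\) also makes~\eqref{eq:diag_for_comp_morph} commute; by the uniqueness established above this forces \(\gamma'=\gamma\), whence the bicharacter of~\(\gamma\) is~\(W\), as asserted. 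Evaluating everything on \(\Hils_C\otimes\Hils_A\otimes\Hils_B\) by means of~\eqref{eq:Delta_R_via_V}, a direct computation gives
\[
(\Id_C\otimes\beta)\alpha(x)
= \mathbb{V}^{A\to B}_{23}\mathbb{V}^{C\to A}_{12}(x\otimes1\otimes1)(\mathbb{V}^{C\to A}_{12})^*(\mathbb{V}^{A\to B}_{23})^*
\]
and
\[
(\alpha\otimes\Id_B)\gamma'(x)
= \bigl(\mathbb{V}^{C\to A}_{12}\mathbb{W}_{13}\bigr)(x\otimes1\otimes1)\bigl(\mathbb{V}^{C\to A}_{12}\mathbb{W}_{13}\bigr)^*.
\]
Substituting the composition equation~\eqref{eq:link_unit_comp_exist} in the form \(\mathbb{V}^{C\to A}_{12}\mathbb{W}_{13}=\mathbb{V}^{A\to B}_{23}\mathbb{V}^{C\to A}_{12}(\mathbb{V}^{A\to B}_{23})^*\) turns the second expression into the first, once one uses that \(x\otimes1\otimes1\) commutes with \(\mathbb{V}^{A\to B}_{23}\). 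Hence \((\alpha\otimes\Id_B)\gamma'=(\Id_C\otimes\beta)\alpha\), as required.

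The only genuinely load-bearing step is this last cancellation: equation~\eqref{eq:link_unit_comp_exist} is exactly what lets the product \(\mathbb{V}^{C\to A}_{12}\mathbb{W}_{13}\) be rewritten with the two \(\mathbb{V}^{A\to B}_{23}\) factors on the outside, after which the inner factor \((\mathbb{V}^{A\to B}_{23})^*(x\otimes1\otimes1)\mathbb{V}^{A\to B}_{23}\) collapses to \(x\otimes1\otimes1\) because \(x\) sits in the first tensor leg. Everything else is bookkeeping with leg numbering and the two reformulations~\eqref{eq:Delta_R_via_V} and~\eqref{eq:link_unit_comp_exist}; no new analytic input is needed.
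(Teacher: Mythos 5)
Your proof is correct and takes essentially the same route as the paper: the functor identity is read off from Theorem~\ref{the:homomorphism_functor} by evaluating the functors at \(\Comult_C\), and the bicharacter identity reduces to \eqref{eq:Delta_R_via_V} combined with \eqref{eq:link_unit_comp_exist}. The only cosmetic difference is that the paper verifies \eqref{eq:diag_for_comp_morph} on slices of \(\multunit^C\) via \eqref{eq:def_V_via_right_homomorphism}, whereas you verify it on arbitrary elements by conjugating with the implementing unitaries; both hinge on the same step of moving \(\mathbb{V}^{A\to B}_{23}\) past the first tensor leg.
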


\begin{proof}
  Theorem~\ref{the:homomorphism_functor} shows that \(F_\alpha\) maps the coaction~\(\Delta_C\) to~\(\alpha\).  This is mapped by~\(F_\beta\) to the unique morphism making~\eqref{eq:diag_for_comp_morph} commute.  Thus \(F_\beta\circ F_\alpha\) maps~\(\Delta_C\) to~\(\gamma\), forcing \(F_\beta\circ F_\alpha = F_\gamma\).

Theorem~\ref{the:homomorphism_functor} yields a unique continuous right coaction~\(\gamma\) of \((B,\Comult_B)\) on \((C,\Comult_C)\) making~\eqref{eq:diag_for_comp_morph} commute.  It is not hard to show that this is a right quantum group homomorphism.  Anyway, we want to convince ourselves that this construction corresponds to the composition of bicharacters.

  Since slices of~\(\multunit\) by continuous linear functionals on~\(\hat{C}\) generate a dense subspace of~\(C\), the diagram~\eqref{eq:diag_for_comp_morph} commutes if and only if
  \[
  (\Id_{\hat{C}}\otimes\Id_C\otimes\beta)(\Id_{\hat{C}}\otimes\alpha)(\multunit^C)
  = (\Id_{\hat{C}}\otimes\alpha\otimes\Id_B)(\Id_{\hat{C}}\otimes\gamma)(\multunit^C).
  \]
  Equation~\eqref{eq:def_V_via_right_homomorphism} implies \(\Id_{\hat{C}}\otimes\alpha(\multunit^C) = \multunit^C_{12} \Linkunit{C}{A}_{13}\), and \(\Id_{\hat{C}}\otimes\Id_A\otimes\beta\) maps this to the element represented by the unitary operator
  \[
  \Multunit^C_{12} \mathbb{V}^{A\to B}_{34} \mathbb{V}^{C\to A}_{13} (\mathbb{V}^{A\to B}_{34})^* = \Multunit^C_{12} \mathbb{V}^{C\to A}_{13} \mathbb{V}^{C\to B}_{14}
  \]
  by \eqref{eq:Delta_R_via_V} and~\eqref{eq:link_unit_comp_exist}.  Thus
  \[
  (\Id_{\hat{C}}\otimes\Id_C\otimes\beta)(\Id_{\hat{C}}\otimes\alpha)(\multunit^C)
  = \multunit^C_{12} \Linkunit{C}{A}_{13} \Linkunit{C}{B}_{14},
  \]
  where \(\Linkunit{C}{B} \defeq \Linkunit{A}{B} * \Linkunit{C}{A}\).  Let~\(\tilde{V}\) be the bicharacter associated to~\(\gamma\).  Equation~\eqref{eq:def_V_via_right_homomorphism} implies
  \[
  (\Id_{\hat{C}}\otimes\alpha\otimes\Id_B)(\Id_{\hat{C}}\otimes\gamma)(\multunit^C)
  = (\Id_{\hat{C}}\otimes\alpha\otimes\Id_B)(\multunit^C_{12} \tilde{V}_{13})
  = \multunit^C_{12} \Linkunit{C}{A}_{13} \tilde{V}_{14}.
  \]
  Hence~\eqref{eq:diag_for_comp_morph} commutes if and only if \(\tilde{V} = \Linkunit{C}{B}\).
\end{proof}

\begin{example}
  \label{ex:compositon_of_link_unit_induced_by_strong_morph}
  Let \(\Linkunit{C}{A}\in\U\Mult(\hat{C}\otimes A)\) and \(\Linkunit{A}{B}\in\U\Mult(\hat{A}\otimes B)\) be bicharacters.

  Assume first that~\(\Linkunit{A}{B}\) comes from a Hopf \(^*\)\nb-homomorphism \(f\colon A\to B\), that is, \(\Linkunit{A}{B} = (\Id\otimes f)(\multunit^A)\).  Let~\(\alpha\) be the right quantum group homomorphism from~\(C\) to~\(A\) associated to~\(\Linkunit{C}{A}\).  The right quantum group homomorphism from~\(A\) to~\(B\) associated to~\(\Linkunit{A}{B}\) is \(\beta\defeq (\Id_A\otimes f)\Comult_A\).  The following computation shows that \(\gamma=(\Id_C\otimes f)\alpha\) satisfies~\eqref{eq:diag_for_comp_morph}:
  \begin{align*}
    (\Id_{\hat{C}}\otimes\Id_C\otimes\beta)(\Id_{\hat{C}}\otimes\alpha)\multunit^C
    &= (\Id_{\hat{C}}\otimes\Id_C\otimes\Id_A\otimes f)
    (\Id_{\hat{C}}\otimes\Id_C\otimes\Comult_A)\multunit^C_{12}\Linkunit{C}{A}_{13}\\
    &= (\Id_{\hat{C}}\otimes\Id_C\otimes\Id_A\otimes f)\multunit^C_{12}
    \Linkunit{C}{A}_{13}\Linkunit{C}{A}_{14}\\
    &= (\Id_{\hat{C}}\otimes\Id_C\otimes\Id_A\otimes f)(\Id_{\hat{C}}\otimes\alpha\otimes\Id_B)
    \multunit^C_{12}\Linkunit{C}{A}_{13}\\
    &= (\Id_{\hat{C}}\otimes\alpha\otimes\Id_B)(\Id_{\hat{C}}\otimes (\Id_C\otimes f)\alpha)\multunit^C;
  \end{align*}
  the first step uses~\eqref{eq:def_V_via_right_homomorphism}; the second step uses~\eqref{eq:linking_unitary_comult_A}; the third and the last step use~\eqref{eq:def_V_via_right_homomorphism}.  Proposition~\ref{pro:compose_right} yields \(\beta*\alpha=(\Id_C\otimes f)\alpha\).  Hence the composition \(\Linkunit{A}{B} * \Linkunit{C}{A}\) is \((\Id_C\otimes f)\Linkunit{C}{A}\).
\end{example}

\begin{example}
  \label{ex:compositon_of_link_unit_induced_by_strong_morph_II}
  Now assume that~\(\Linkunit{C}{A}\) is constructed from a Hopf \(^*\)\nb-homomorphism \(f\colon \hat{A}\to \hat{C}\), that is, \(\Linkunit{C}{A} = (f\otimes\Id_A)(\multunit^A)\).  Then the composition \(\Linkunit{C}{B}\) is \((f\otimes\Id)(\Linkunit{A}{B})\).  This follows easily from Example~\ref{ex:compositon_of_link_unit_induced_by_strong_morph} because \(C\mapsto \hat{C}\) is a contravariant functor on bicharacters.
\end{example}

\begin{proposition}
  \label{pro:functorial_corepresentations}
  A right quantum group homomorphism from~\(C\) to~\(A\) induces a natural map from Hilbert space corepresentations of~\(C\) to corepresentations of~\(A\) on the same Hilbert space.
\end{proposition}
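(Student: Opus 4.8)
The plan is to produce the corepresentation of~\(A\) by the same composition formula used for bicharacters, and to read off naturality from the representation\nb{}theoretic picture. A Hilbert space corepresentation of~\(C\) on~\(\Hils\) is a unitary multiplier \(U\in\U\Mult(\Comp(\Hils)\otimes C)\), with associated operator \(\mathbb{U}\in\U(\Hils\otimes\Hils_C)\), satisfying \((\Id\otimes\Comult_C)(U)=U_{12}U_{13}\); by~\eqref{eq:Delta_via_W} this is the pentagon \(\Multunit^C_{23}\mathbb{U}_{12}=\mathbb{U}_{12}\mathbb{U}_{13}\Multunit^C_{23}\), which is exactly~\eqref{eq:linking_unitary_pentagon_A} with~\(A\) replaced by~\(C\). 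Thus a corepresentation of~\(C\) carries precisely the ``second\nb-leg half'' of a concrete bicharacter, but over an arbitrary coefficient space~\(\Hils\) in place of a quantum group. Given a bicharacter~\(V\) from~\(C\) to~\(A\) with concrete unitary \(\mathbb{V}\in\U(\Hils_C\otimes\Hils_A)\), I would define the image corepresentation \(\mathbb{U}'\in\U(\Hils\otimes\Hils_A)\) by the composition recipe of Definition~\ref{def:composition_of_linking_unitary},
\[
\mathbb{U}'_{13} = \mathbb{U}_{12}^*\,\mathbb{V}_{23}\,\mathbb{U}_{12}\,\mathbb{V}_{23}^*
\qquad\text{in }\U(\Hils\otimes\Hils_C\otimes\Hils_A),
\]
treating~\(\mathbb{U}\) as a bicharacter ``from~\(C\) with bare coefficient space~\(\Hils\)'' and composing it with~\(\mathbb{V}\).

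The verification that this is well defined and gives an \(A\)\nb-corepresentation is the proof of Lemma~\ref{lem:composition_exists} with~\(\mathbb{V}^{C\to A}\) replaced by~\(\mathbb{U}\) and~\(\mathbb{V}^{A\to B}\) by~\(\mathbb{V}\). First, to see that the right\nb-hand side~\(\tilde U\) lies in \(\U\Mult(\Comp(\Hils)\otimes 1_{\Hils_C}\otimes A)\) — so that~\(\mathbb{U}'\) exists — I would repeat the first displayed computation of that lemma to obtain \(\Multunit^C_{23}\tilde U_{124}(\Multunit^C_{23})^* = \tilde U_{134}\), now conjugating by~\(\Multunit^C\) because~\(C\) is the shared middle object; this uses only the corepresentation identity for~\(\mathbb{U}\) (the translate of~\eqref{eq:linking_unitary_pentagon_A}) and the \(C\)\nb-pentagon~\eqref{eq:linking_unitary_pentagon_C} for~\(\mathbb{V}\), and Theorem~\ref{the:co-invariant_type_operators}, applied with \(\Multunit=\Multunit^C\) and coefficient algebra \(\Comp(\Hils)\otimes A\), then forces constancy in the~\(\Hils_C\) leg. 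Then the \eqref{eq:linking_unitary_pentagon_A}\nb-version of the lemma's second computation shows \(\Multunit^A_{23}\mathbb{U}'_{12}=\mathbb{U}'_{12}\mathbb{U}'_{13}\Multunit^A_{23}\), i.e.\ that~\(\mathbb{U}'\) is a corepresentation of~\(A\); this step uses the corepresentation identity for~\(\mathbb{U}\) together with the \(A\)\nb-pentagon~\eqref{eq:linking_unitary_pentagon_A} for~\(\mathbb{V}\). The point to stress is that at no stage is a ``\(\hat{C}\)\nb-pentagon'' for~\(\mathbb{U}\) needed — consistent with~\(\Hils\) being a bare Hilbert space, not a quantum group.

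For naturality I would pass to the representation picture. By the universal property of \(\dumaxcorep^C\in\U\Mult(\hat{C}^\univ\otimes C)\), corepresentations~\(\mathbb{U}\) of~\(C\) on~\(\Hils\) correspond bijectively to nondegenerate representations \(\rho\colon\hat{C}^\univ\to\Bound(\Hils)\) via \(U=(\rho\otimes\Id_C)\dumaxcorep^C\), and an operator~\(T\) intertwines two corepresentations exactly when it intertwines the two representations. The bicharacter~\(V\) lifts, by Propositions~\ref{pro:qg_universal} and~\ref{pro:unique_lift_bicharacter}, to a Hopf \(^*\)\nb-homomorphism \(C^\univ\to A^\univ\), whose dual \(\hat\varphi\colon\hat{A}^\univ\to\hat{C}^\univ\) is furnished by Theorem~\ref{the:linking_universal}. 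Precomposition \(\rho\mapsto\rho\circ\hat\varphi\) then sends representations of~\(\hat{C}^\univ\) to representations of~\(\hat{A}^\univ\), hence corepresentations of~\(C\) on~\(\Hils\) to corepresentations of~\(A\) on the same~\(\Hils\); this is manifestly natural, since \(T\rho^{(1)}(y)=\rho^{(2)}(y)T\) for all \(y\in\hat{C}^\univ\) immediately yields \(T(\rho^{(1)}\circ\hat\varphi)(x)=(\rho^{(2)}\circ\hat\varphi)(x)T\). I would finish by checking that this conceptual map coincides with the explicit formula above, using \eqref{eq:univ_corep_of_C_from_U}--\eqref{eq:univ_corep_of_hat_C_from_U} and the characterisation of~\(\hat\varphi\) in Theorem~\ref{the:linking_universal}.

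The main obstacle is the well\nb-definedness step: showing that the conjugate \(\mathbb{U}_{12}^*\mathbb{V}_{23}\mathbb{U}_{12}\mathbb{V}_{23}^*\) is constant in the~\(\Hils_C\) leg, which is exactly where Theorem~\ref{the:co-invariant_type_operators} does the real work. By contrast, naturality is transparent through the precomposition description but would require extra care (because of adjoints) if argued straight from the explicit unitary formula; this is my reason for routing the naturality claim through the representations of~\(\hat{C}^\univ\) rather than through the formula itself.
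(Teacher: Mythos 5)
Your argument is correct, but it takes a genuinely different route from the paper's. The paper's own proof is a two\nb-line reduction: by \cite{Baaj-Skandalis:Hopf_KK}*{Proposition 2.8}, a corepresentation of~\(C\) on~\(\Hils\) is the same as a continuous \(C\)\nb-coaction on \(\Comp(\Hils\oplus\C)\) that is trivial on the corner \(\Comp(\C)\), and Theorem~\ref{the:homomorphism_functor} converts this into an \(A\)\nb-coaction, which still fixes the corner by functoriality and hence comes from an \(A\)\nb-corepresentation on~\(\Hils\); existence and naturality are both inherited from that theorem. You instead construct the induced corepresentation explicitly by the composition recipe \(\mathbb{U}'_{13}=\mathbb{U}_{12}^*\mathbb{V}_{23}\mathbb{U}_{12}\mathbb{V}_{23}^*\), rerunning Lemma~\ref{lem:composition_exists} with one quantum\nb-group leg replaced by a bare Hilbert space. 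Your bookkeeping is accurate: the constancy computation needs only the corepresentation identity for~\(\mathbb{U}\) and~\eqref{eq:linking_unitary_pentagon_C} for~\(\mathbb{V}\); Theorem~\ref{the:co-invariant_type_operators} applies with \(\Multunit^C\) acting on the two \(\Hils_C\)\nb-legs and coefficient algebra \(\Comp(\Hils)\otimes A\); and the \(A\)\nb-corepresentation identity for~\(\mathbb{U}'\) then follows from~\eqref{eq:linking_unitary_pentagon_A} for~\(\mathbb{V}\) alone. What your route buys is an explicit formula for the image corepresentation, which the paper's proof does not exhibit; what it costs is redoing the computations of Lemma~\ref{lem:composition_exists}. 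Two small remarks. The naturality detour through representations of~\(\hat{C}^\univ\) uses the Hilbert\nb-space variant of the universal property (corepresentations of~\(C\) in \(\U\Mult(\Comp(\Hils)\otimes C)\) correspond to nondegenerate representations of~\(\hat{C}^\univ\) on~\(\Hils\)), which is the evident dual of the property stated in the paper but is not literally in the text, and it leaves open your deferred identification with the explicit formula. That detour is in fact avoidable: if \((T\otimes 1)\mathbb{U}^{(1)}=\mathbb{U}^{(2)}(T\otimes 1)\), then also \(\mathbb{U}^{(2)*}(T\otimes 1)=(T\otimes 1)\mathbb{U}^{(1)*}\), and inserting both identities into \(\mathbb{U}_{12}^*\mathbb{V}_{23}\mathbb{U}_{12}\mathbb{V}_{23}^*\) gives the intertwining relation for the induced corepresentations directly --- the adjoints cause no trouble.
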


\begin{proof}
  Instead of giving a direct proof, we reduce this to Theorem~\ref{the:homomorphism_functor}.  A corepresentation of~\(C\) on~\(\Hils\) is equivalent to a coaction of~\(C\) on the \(\Cst\)\nb-algebra \(\Comp(\Hils\oplus\C)\) that restricts to the trivial coaction on the corner \(\C=\Comp(\C)\) and hence leaves the corner \(\Comp(\Hils)\) invariant (see \cite{Baaj-Skandalis:Hopf_KK}*{Proposition 2.8}).  A right quantum group homomorphism allows us to turn this \(C\)\nb-coaction on \(\Comp(\Hils\oplus\C)\) into an \(A\)\nb-coaction on \(\Comp(\Hils\oplus\C)\), which still fixes the corner~\(\C\) by functoriality and hence comes from an \(A\)\nb-corepresentation on~\(\Hils\).
\end{proof}

\begin{bibdiv}
  \begin{biblist}
\bib{Baaj-Skandalis:Hopf_KK}{article}{
  author={Baaj, Saad},
  author={Skandalis, Georges},
  title={\(C^*\)\nobreakdash -alg\`ebres de Hopf et th\'eorie de Kasparov \'equivariante},
  language={French, with English summary},
  journal={\(K\)-Theory},
  volume={2},
  date={1989},
  number={6},
  pages={683--721},
  issn={0920-3036},
  review={\MRref {1010978}{90j:46061}},
  doi={10.1007/BF00538428},
}

\bib{Baaj-Skandalis:Unitaires}{article}{
  author={Baaj, Saad},
  author={Skandalis, Georges},
  title={Unitaires multiplicatifs et dualit\'e pour les produits crois\'es de $C^*$\nobreakdash -alg\`ebres},
  language={French, with English summary},
  journal={Ann. Sci. \'Ecole Norm. Sup. (4)},
  volume={26},
  date={1993},
  number={4},
  pages={425--488},
  issn={0012-9593},
  review={\MRref {1235438}{94e:46127}},
}

\bib{Kustermans:LCQG_universal}{article}{
  author={Kustermans, Johan},
  title={Locally compact quantum groups in the universal setting},
  journal={Internat. J. Math.},
  volume={12},
  date={2001},
  number={3},
  pages={289--338},
  issn={0129-167X},
  review={\MRref {1841517}{2002m:46108}},
  doi={10.1142/S0129167X01000757},
}

\bib{Ng:Morph_of_Mult_unit}{article}{
  author={Ng , Chi-Keung},
  title={Morphisms of multiplicative unitaries},
  journal={J. Operator Theory},
  volume={38},
  year={1997},
  number={2},
  pages={203--224},
  issn={0379-4024},
  review={\MRref {1606928}{99i:46044}},
}

\bib{Soltan-Woronowicz:Remark_manageable}{article}{
  author={So{\l }tan, Piotr M.},
  author={Woronowicz, Stanis{\l }aw L.},
  title={A remark on manageable multiplicative unitaries},
  journal={Lett. Math. Phys.},
  volume={57},
  date={2001},
  number={3},
  pages={239--252},
  issn={0377-9017},
  review={\MRref {1862455}{2002i:46072}},
  doi={10.1023/A:1012230629865},
}

\bib{Soltan-Woronowicz:Multiplicative_unitaries}{article}{
  author={So{\l }tan, Piotr M.},
  author={Woronowicz, Stanis{\l }aw L.},
  title={From multiplicative unitaries to quantum groups. II},
  journal={J. Funct. Anal.},
  volume={252},
  date={2007},
  number={1},
  pages={42--67},
  issn={0022-1236},
  review={\MRref {2357350}{2008k:46170}},
  doi={10.1016/j.jfa.2007.07.006},
}

\bib{Vaes:Induction_Imprimitivity}{article}{
  author={Vaes, Stefaan},
  title={A new approach to induction and imprimitivity results},
  journal={J. Funct. Anal.},
  volume={229},
  year={2005},
  number={2},
  pages={317--374},
  issn={0022-1236},
  review={\MRref {2182592}{2007f:46065}},
  doi={10.1016/j.jfa.2004.11.016},
}

\bib{Wang:Free_products_of_CQG}{article}{
  author={Wang, Shuzhou},
  title={Free products of compact quantum groups},
  journal={Comm. Math. Phys.},
  volume={167},
  year={1995},
  number={3},
  pages={671--692},
  issn={0010-3616},
  review={\MRref {1316765}{95k:46104}},
}

\bib{Woronowicz:Multiplicative_Unitaries_to_Quantum_grp}{article}{
  author={Woronowicz, Stanis{\l }aw Lech},
  title={From multiplicative unitaries to quantum groups},
  journal={Internat. J. Math.},
  volume={7},
  date={1996},
  number={1},
  pages={127--149},
  issn={0129-167X},
  review={\MRref {1369908}{96k:46136}},
  doi={10.1142/S0129167X96000086},
}
  \end{biblist}
\end{bibdiv}
\end{document}